\numberwithin{equation}{section}
\newtheorem{Th}{Theorem}[section]
\newtheorem{Lemma}[Th]{Lemma}
\newtheorem{Cor}[Th]{Corollary}
\newtheorem{Prop}[Th]{Proposition}
\theoremstyle{definition}
\newtheorem{Def}[Th]{Definition}
\newtheorem{Rem}[Th]{Remark}
\newtheorem{?}[Th]{Problem}
\begin{document}
	\title{\textbf{Scaling of Components \\
		in	Critical Geometric Random Graphs on 2-dim Torus}}
	\author{Vasilii Goriachkin and Tatyana Turova}
	\date{}
	\maketitle 
	\noindent 
	\small{Centre for Mathematical Sciences, Faculty of Science, Lund University, Solvegatan 18, 22100, Lund, Sweden.}
	\normalsize
	
	\begin{abstract}
		We consider random graphs on 
		the set  of $N^2$ vertices placed on the 
		discrete $2$-dimensional torus. The edges between pairs of vertices 
		are independent, and
		their probabilities decay 
		with the distance $\rho$ between these vertices
		as
		$(N\rho)^{-1}$. This is an example of an inhomogeneous random graph which is not of rank 1.
		The reported previously results on 
		the sub- and super-critical cases of this model exhibit great similarity to the classical Erd\H{o}s-R\'{e}nyi
		graphs.
		
		Here we study 
		the critical phase. 
		A diffusion approximation for the size of the largest connected component rescaled with $(N^2)^{-2/3}$
		 is derived. This completes the proof that in all regimes the model is within the same class as 
		Erd\H{o}s-R\'{e}nyi
		graph with respect to scaling of the largest component. 
	\end{abstract}

\noindent
{\it MSC-class: 	05C80, 60G42, 60G50.}

\section{Introduction}
\subsection{The Model and Main Result}\label{SectionModel}
Let $N \in \mathbb{N}$ and consider in dimension $d\geq 2$
the discretized torus $\mathbb{T}_N^d = (\mathbb{Z} / N \mathbb{Z})^d$.
Denote the set of vertices on this torus  by $$V_N = \{1, \ldots, N\}^d.$$ Hence, the number of vertices in $V_N$ is
\[|V_N|=N^d=:n(N).\]
For any two vertices $u = (u_1 , \ldots , u_d ), v= (v_1 , \ldots , v_d ) \in V_N$ define the torus distance $\rho(u, v)$  between them by
\begin{equation*}\label{distance}
\rho(u, v) =\sum_{k=1}^d\rho_N(u_k - v_k) ,
\end{equation*}
 where for any $1\leq i \leq N$
\begin{equation*}
\rho_N(i) = 
	\begin{cases}
		i, \quad & 0 \leq i \leq N/2, \\
		N - i, \quad & N/2 < i < N.
	\end{cases}
      \end{equation*}

      Consider  a  random simple graph on the set of vertices $V_N$ defined as follows.
      Let   $c>0$ and $0\leq  \alpha <d$  be fixed arbitrarily, and let $G_{N,d}^{\alpha , c}$  denote a random graph on $V_N$ where
      any two different vertices $u, v \in V_N$ are connected independently of the rest
      with probability
\begin{equation}\label{puv}
p(u, v) = \min \left \{\dfrac{c}{N^{ d-\alpha}\rho^{ \alpha}(u, v)}, 1 \right \}.
\end{equation}
We shall also write $p(u, u)=0$ for any $u\in V_N$, meaning that there are no loops in the graph.

This model was introduced in \cite{NT} as a generalization of the original model  from  \cite{JK}, which is a particular case of (\ref{puv}) when $d=2$ and $\alpha=1$.

For the range of parameters $0\leq \alpha <d$, which includes our case here, the random distance graphs can be viewed as a subclass of 
the general inhomogeneous random graph model introduced in \cite{BolJanRio2}. The relation of the models of

Observe, that when $\alpha=0$ probabilities (\ref{puv}) for all large $N$ are uniformly equal to $$\frac{c}{|V_N|}$$
for all $u, v \in V_N$, and hence
graph $G_{N,d}^{0, c}$ becomes a classical Erd\H{o}s - R\'{e}nyi random graph on $V_N$.
Otherwise, when $\alpha>0$  the probability of connection  (\ref{puv})  depends on the distance between the vertices, and therefore $G_{N,d}^{\alpha , c}$  
is called a random geometric graph (or a random distance graph).  Observe that in terms of theory of inhomogeneous random graphs \cite{BolJanRio2} probability 
(\ref{puv}) is not of rank-1 (i.e., it is not represented as a product, say $\phi(u)\phi(v)$), which usually complicates its analysis. The close relation between 
random geometric graphs \cite{P} and general inhomogeneous random graphs was disclosed already in \cite{BolJanRio1}.

Utilizing approach via inhomogeneous random graphs it was proved in 
\cite{ANT} for the case $d=2, \alpha=1$, and later in \cite{NT} for all $d>1$ and  $0< \alpha <d$,  that random geometric graphs 
$G_{N,d}^{\alpha , c}$ for all $0\leq  \alpha <d$
exhibit phase transition similar to the one in $G_{n,p}$. More precisely,  there is a critical value $c^{cr}= c^{cr}(d, \alpha) $ (see the details in \cite{NT}) such that for all $c< c^{cr}(d, \alpha)$ the largest connected component of $G_{N,d}^{\alpha , c}$ scaled to $\log |V_N|$ converges in probability
to a positive constant, while for all $c> c^{cr}(d,\alpha)$ the largest connected component of $G_{N,d}^{\alpha , c}$ scaled to $|V_N|$ converges in probability
to a positive constant.

To begin study of the critical phase of model $G_{N,d}^{\alpha , c}$, here we start with the case 
$d=2, \alpha=1$. 

From now on we fix  $d=2, \alpha=1$, and to simplify notation we shall write in  this case 
\[G_{N}^{c}= G_{N,2}^{1 , c}.\]
The critical value for this model is known from \cite{ANT}
(or \cite{JK}), it is 
\[c^{cr}(d, \alpha)\mid _{d=2, \alpha=1} = \frac{1}{4 \log 2}.\]
Note that \cite{ANT} studies the largest component in $G_{N}^{ c}$ when $c\neq c^{cr}$, while work \cite{JK}
focuses on bootstrap percolation processes on $G_{N}^{ c}$.

To complete a picture of phase transitions in $G_{N}^{ c}$  we prove that at the critical value of $c$ graph $G_{N}^{ c}$ has exactly same scaling limit of the largest component 
as in the classical random  Erd{\"o}s-R{\'e}nyi graph 
derived first by Aldous \cite{Aldous}.

\begin{Th}\label{T}
  Let $G_{N}^c $ be a random graph on the set of vertices of $\mathbb{T}_N^2$ with probabilities of connections
\begin{equation*}\label{puvcr}
p(u, v) = \min \left \{\dfrac{c}{N\rho(u, v)}, 1 \right \},
\end{equation*}
where 
$$c=c^{cr}= \frac{1}{4 \log 2}$$
is  the critical value.

Let ${\cal C}_1, {\cal C}_2, \ldots$ denote the ordered sizes of the connected components in $G_{N}^c$ with ${\cal C}_1$ being the largest one. Let $\gamma_1, \gamma_2, \ldots$ denote the ordered lengths of the excursions of the process
\[B(s)=\widetilde{W}(s)-\min_{0<t<s}\widetilde{W}(t),\]
where
\begin{equation}\label{W}
\widetilde{W}(s)=W(s)-\frac{1}{2}s^2
\end{equation}
with $W$ being  the standard Brownian motion.
 Then 
 $$ \frac{{1}}{|V_N|^{2/3}} \left ( {\cal C}_1, {\cal C}_2, \ldots \right )\stackrel{d}{\rightarrow} \left ( \gamma_1, \gamma_2, \ldots \right ), \ \mbox{ as } \ N \rightarrow \infty,$$
 with respect to $l^2$ topology on the set of infinite non-decreasing sequences $x = (x_1, x_2, \ldots)$
of non-negative values 
with  metric $d(x, y) = \left (\sum\limits_{i} (x_i - y_i)^2 \right )^{1/2}$.
\end{Th}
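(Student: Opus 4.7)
The plan is to analyze a breadth-first exploration of $G_N^c$ and show that its associated walk converges, after rescaling, to the Brownian motion with parabolic drift $\widetilde{W}$, and then invoke Aldous's continuity theorem to transfer this to the $l^2$ convergence of the ordered excursion lengths. Write $n=|V_N|=N^2$. Carry out a BFS with uniformly random restarts; let $v_k$ be the $k$-th examined vertex, $X_k$ the number of new neighbors of $v_k$ among as-yet-unexplored vertices, and $Z_k=\sum_{i=1}^{k}(X_i-1)$. The successive excursions of $Z$ above its running minimum have lengths equal to the component sizes of $G_N^c$. The goal reduces to proving the functional convergence
\[
\frac{1}{n^{1/3}}\, Z_{\lfloor s n^{2/3}\rfloor} \;\stackrel{d}{\longrightarrow}\; W(s)-\tfrac{1}{2}s^2
\]
in the Skorokhod $J_1$ topology on $[0,T]$ for each fixed $T$, after which the ordered excursion-length mapping gives the claimed $l^2$ convergence.

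Conditional on the history $\mathcal{F}_{k-1}$, the variable $X_k$ is a sum over unexplored $u$ of independent Bernoulli$(p(v_k,u))$. The basic Riemann-sum estimate on the torus,
\[
\sum_{u\in V_N} p(v_k,u) \;\approx\; \frac{c}{N}\sum_{u\neq v_k}\frac{1}{\rho(v_k,u)} \;\approx\; 4\,c\log 2 \;=\; 1
\]
at $c=1/(4\log 2)$ (the $\log 2$ arising from the wrap-around of the $L^1$-ball on $\mathbb{T}_N^2$), shows that the leading-order drift vanishes uniformly in $v_k$. The parabolic correction comes from depletion of the unexplored pool: subtracting the contribution of the $k$ already-explored vertices gives $\sum_{u\text{ expl.}}p(v_k,u)\approx k/n$, provided these vertices are approximately uniformly distributed on $\mathbb{T}_N^2$, whence $\mathbb{E}[Z_k\mid \mathcal{F}_{k-1}]\approx -k^2/(2n)$, equal to $-\tfrac12 s^2 n^{1/3}$ under $k=s n^{2/3}$. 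The conditional variance is $\sum_u p(v_k,u)(1-p(v_k,u))\approx 1$ uniformly in $v_k$ (the sum of squares contributing only $O((\log N)/N^2)$), so a martingale functional CLT applied to $Z_k-\mathbb{E}[Z_k\mid\mathcal{F}_{k-1}]$ yields the Brownian part $W(s)$, completing the diffusion limit.

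The main obstacle is justifying the uniformity hypothesis behind the depletion estimate. Since $p(v,u)$ strongly favors nearby pairs, each BFS step is biased toward a vertex geometrically close to the one just examined, and the set of explored vertices may cluster locally. One must show that on the critical time scale $n^{2/3}=N^{4/3}$ these clusters are themselves distributed approximately uniformly on $\mathbb{T}_N^2$, in a sense strong enough to pin down $\sum_{u\text{ expl.}}p(v_k,u)$ up to $o(n^{-1/3})$ after the final rescaling. A natural route is to split the sum according to torus distance: for $\rho$ larger than the typical cluster diameter a second-moment bound on the empirical measure of explored vertices suffices, while the small-$\rho$ contribution is at most $O((\log N)/N)$ per step and must be summed with care over the BFS trajectory. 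Comparison with an auxiliary rank-1 inhomogeneous graph, in the spirit of \cite{ANT, NT}, is likely to be useful both to control the BFS position statistics and to establish the tightness needed for the martingale FCLT.
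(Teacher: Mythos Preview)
Your overall plan coincides with the paper's: the same breadth-first walk $z(k)$ is split into a martingale part and a drift part, and the two are shown to converge respectively to $W(s)$ and $-\tfrac12 s^2$ (the paper's Lemmas \ref{dr} and \ref{ma}), after which Aldous's excursion argument finishes. Your identification of the depletion term as the source of the parabolic drift, and of the conditional variance being $1+o(1)$, is exactly right and matches the paper's Lemma \ref{Lm_Est_ENvjjFjz}.

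Where you and the paper part ways is precisely at the point you flag as the main obstacle. You propose to justify the approximate uniformity of the explored set via second-moment bounds on the empirical measure, a distance-splitting argument, and comparison with a rank-1 auxiliary graph. The paper does none of this. Its key technical input is Theorem \ref{ThIndep}: the two-point function of the BFS trajectory satisfies $\mathbb{P}\{v_j=b\mid v_i=a\}=n^{-1}(1+o(1))$ whenever $j-i\ge n^{1/6}$, uniformly in $a,b$. This is proved by introducing the Markov chain on $V_N$ with transition kernel $P(u,v)=p(u,v)/Z_N$ and showing it mixes to uniform in $O(\log N)$ steps (Lemma \ref{LmLevin_Peres}, via the Doeblin-type bound $P^2(u,v)\ge \tfrac{c}{2}\pi(v)$), then transferring this to the self-avoiding version that models paths in the BFS tree (Lemmas \ref{LMarkov1}--\ref{LEp}, Corollary \ref{LMarkov3}). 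The depletion sum $\sum_{u\in I_{j-1}}p(v_j,u)$ is then handled not as a single empirical-measure estimate but through the decomposition (\ref{TSe10}) into $\mathcal{D}_1,\ldots,\mathcal{D}_4$; the dominant piece $\mathcal{D}_1=\sum_{i<j}\mathbb{E}\{|\mathcal{N}(v_j)\cap\mathcal{N}(v_i)|\mid\mathcal{F}_j^z\}$ is evaluated using the two-point asymptotic above (Lemmas \ref{ThExpectationY_N}, \ref{Lm_Convergence_Y_N}).

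Your proposal, as written, has a gap here. A raw second-moment bound on the empirical measure of explored vertices does not obviously give the required $o(n^{-1/3})$ precision for the drift, because the BFS positions are correlated through the tree structure and $p(v_k,\cdot)$ has heavy local weight; and the rank-1 comparison from \cite{ANT,NT} was used there for sub/supercritical component sizes, not for spatial mixing of the exploration, so it is not clear what role it would play in controlling $\sum_{u\in I_{j-1}}p(v_j,u)$. The paper's route via fast mixing of the associated random walk is what actually closes this gap, and it is the one genuinely new idea in the proof; without an explicit substitute for Theorem \ref{ThIndep}, your plan remains incomplete at its acknowledged crux.
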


\subsection{Related works and open questions}\label{SectionRelatedWorks}

After an inspiring work of Aldous \cite{Aldous} who 
introduced the method of diffusion approximation for the study of (homogeneous) random graphs, 
a number of inhomogeneous models, see \cite{BHL}, \cite{Turova}, \cite{CG}, \cite{DLV}, \cite{GS}, were proved  as well to be  in the same universality class as the Erd{\"o}s-R{\'e}nyi graph
in \cite{Aldous}. Also novel scalings were  discovered in 
 \cite{BHL2} and \cite{DHL} for
the configuration model.

It must be noted that although all cited papers follow in general the idea of Aldous \cite{Aldous}, each model requires authentic tools to establish the 
diffusion approximation and to find the coefficients in
the defining equation (as (\ref{W}) above).
Furthermore, all but one (namely, \cite{DLV}) of these studies concern graphs of rank one only. In \cite{DLV}, which studies a quantum graph constructed from copies of 
Erd{\"o}s-R{\'e}nyi graphs, the authors point out some particular obstacles specific for their non- rank one model.
In general, such models have, roughly speaking, more dependencies in the structure of the graph when compared with the rank one case.

Geometric random graphs $G_{N,d}^{\alpha , c}$  with probabilities of connections (\ref{puv}) offer perhaps the most versatile model which is not of rank one. 
The model $G_{N,d}^{\alpha , c}$
provides a common framework for different classes of
graphs, because depending on the parameters it interpolates between models of long-range percolation (intensively studied as well, e.g., \cite{CS}) and classical random graphs, including various random distance graphs as e.g., in 
\cite{DHH}, \cite{JK}, \cite{ANT}.

Our proof of Theorem \ref{T} explores properties of random walk associated with the graph $G_{N}^{c}$. In particular, the number of small cycles in the graph plays an essential role. Observe, that in the  case $d=2, \alpha=1$ considered here, the expectation of the number of triangles is bounded uniformly in $N$, as it is in the classical random graph. Therefore 
in a view of this remark the result of 
Theorem \ref{T} is perhaps expected.

 However, the number of triangles in the general model
$G_{N,d}^{\alpha , c}$ undergoes itself phase transition along the parameters as reported in \cite{NT}. Therefore our
proof 
does not cover the range of all parameters of general model (\ref{puv}). A heuristic argument suggests that a similar proof should yield same results in the range $\alpha<\frac{2}{3}d$  for all $d\geq 2,$ but for $\alpha$ close to $d$ there might be extra terms in (\ref{W}). We may only conclude
 that it remains a challenge to treat the general model (\ref{puv}) at criticality.

\section {Proofs}
\subsection {Plan of the proof.}\label{SectionPlanProof}

The proof of Theorem \ref{T} follows the main idea for diffusion approximation as in \cite{Aldous}. To execute this program, after deriving in Section \ref{SectionBasicPropertiesG} some basic properties of the graph $G_N^c$, we construct in Sections \ref{SectionBFW} and \ref{TT} an exploration process, so-called breadth first walk. For that we use approach from \cite{Turova} (inspired in turn by \cite{MartinLof}). 

Sections \ref{SectionMartingale} and \ref{SectionDrift} closely follow steps from \cite{Aldous} on construction of a martingale sequence associated with our breadth first walk.
In Section \ref{SectionDrift} we state Lemma \ref{dr}
and Lemma \ref{ma} , which yield the result of Theorem \ref{T}. 

Sections \ref{SectionMixing}-\ref{SectionProof29} is a preparation for the proof of 
Lemma \ref{dr}
and Lemma \ref{ma} as follows.
Sections \ref{SectionMixing}, \ref{SectionBasicBFW}
describe properties  of the breadth first walk. 
In our case graphs are in a metric space therefore 
the exploration process here is described in terms of random walks on discrete torus. Sections 
\ref{SectionMC}, \ref{SectionRWT} elaborate on the properties of random walks on torus, which help us to derive in Sections \ref{SectionProof29} the crucial properties of our breadth first algorithm. 

Section \ref{SectionAsymPropDrift} proves Lemma \ref{dr}.

Section \ref{SectionProof27} proves Lemma \ref{ma}.

\subsection {Basic properties of  $G_N^c$.}\label{SectionBasicPropertiesG}

Let us set up notation to be used throughout the proof. 

For each vertex $v \in V_N$ let $N_r(v)$ denote the set  of vertices at distance $r$ from $v$:
$$ N_r(v) =  \left \{ u \in \mathbb{T}_N^2 : \rho(u, v) = r  \right \} . $$
Observe, that for all $v \in \mathbb{T}_N^2$ we have 
\begin{equation*}
 \left |   N_r(v)\right |=: N_r, \label{TS1}
\end{equation*}
 which 
is: for odd $N$
\begin{equation*}
N_r = 
	\begin{cases}
		4 r, \quad & 1 \leq r < N/2, \\
		4(N - r), \quad & N/2 < r < N  ,
	\end{cases}
\end{equation*}
and for even $N$
\begin{equation*}
N_r = 
	\begin{cases}
		4 r ,\quad & 1 \leq r < N/2, \\
		4 r - 2 ,\quad & r = N/2, \\
		4(N - r) ,\quad & N/2 < r < N,  \\
		1, \quad & r = N.
	\end{cases}
      \end{equation*}
      
Correspondingly, for any 
vertex $v \in V_N$ let $\mathcal{N}(v)$ denote a random set  of vertices connected to  $v$
in graph $G_N^c $:
\begin{equation}\label{Nvind}
\mathcal{N}(v):=    \left \{ u \in V_N: u \sim v \; \text{in} \; G_N^c  \right \}.
\end{equation}
Observe that by the symmetry of the model and by the independence of edges the variables
$|\mathcal{N}(v)|$, $v \in V_N$, are identically distributed random variables.
Setting also 
\[ \mathcal{N}_r(v)=    \mathcal{N}(v) \cap N_r(v),\]
to be the set of vertices at distance $r$ from $v$ which are connected to  $v$, 
 we get a representation
\begin{equation*}\label{NNr}
  \mathcal{N}(v)=\bigcup_{r\geq 1}\mathcal{N}_r(v).
      \end{equation*}

      The following Lemma is straightforward, it slightly improves the asymptotic used already in \cite{ANT} or \cite{JK}. 
      
\begin{Lemma}\label{Lm_expectation_n_vertices} 
	For any vertex $v$ in $G_N^c$ for odd $N$ 
	\begin{equation*}
		\mathbb{E} \left (  \left | \mathcal{N}(v)  \right | \right )= \sum_{u\in V_N} p(v, u)   = \sum_{r=1}^Np_r N_r = 4 c \log 2 - \dfrac{2c}{N} - \dfrac{c}{N^2} + O \left (\dfrac{1}{N^4} \right ),
	\end{equation*}
	and for even $N$
	\begin{equation*}
		\mathbb{E} \left (  \left | \mathcal{N}(v)  \right | \right ) = 4 c \log 2 - \dfrac{2c}{N} - \dfrac{2c}{N^2} + O \left (\dfrac{1}{N^4} \right ).
	\end{equation*}
      \end{Lemma}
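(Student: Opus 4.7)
The plan is to evaluate $\mathbb{E}(|\mathcal{N}(v)|) = \sum_{r \geq 1} N_r p_r$ by direct summation, since $N_r$ is known exactly from the statement and $p_r = c/(Nr)$ for every $r \geq 1$ once $N$ is large enough that $c/N < 1$ (so the $\min$ in \eqref{puv} is inactive). Thus no truncation is needed and the whole problem reduces to a deterministic calculation.

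I would split the sum at $r = N/2$ according to the piecewise formula for $N_r$. On the near block $1 \leq r < N/2$ one has $N_r p_r = 4r \cdot c/(Nr) = 4c/N$, which is constant in $r$, so this block sums exactly to $(4c/N)\lfloor (N-1)/2 \rfloor$. On the far block $N/2 < r < N$ the substitution $k = N - r$ combined with the identity $k/(N-k) = N/(N-k) - 1$ rewrites the contribution as $4c \sum_{k = \lceil N/2 \rceil + 1}^{N-1} 1/k$ minus an explicit polynomial remainder; the two $\pm 2c$ pieces coming from the two blocks then cancel, leaving the harmonic tail as the main term. For even $N$ one simply adds the midpoint $r = N/2$ (with weight $4(N/2)-2$) and the antipode $r = N$ (with weight $1$) separately, each contributing an exact rational expression in $N$.

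The final step is to expand the harmonic difference using $H_n = \log n + \gamma + 1/(2n) - 1/(12 n^2) + O(n^{-4})$. Crucially, the Euler--Maclaurin series has no $n^{-3}$ term, which is precisely what enables the stated error $O(N^{-4})$. Writing $\log((N-1)/(N/2)) = \log 2 + \log(1 - 1/N)$ and Taylor-expanding the various $1/(N-1)^j$ in powers of $1/N$, the logarithms combine into $4c \log 2$, the $1/N$ pieces into $-2c/N$, and the $1/N^2$ pieces into $-c/N^2$ for odd $N$ and $-2c/N^2$ for even $N$. The main (and essentially only) obstacle is bookkeeping: several $1/N^3$ contributions appear in the intermediate expressions, and one must check that they cancel exactly so that the remainder is genuinely $O(N^{-4})$ and not merely $O(N^{-3})$; this is what forces the slightly sharper harmonic expansion rather than just $H_n = \log n + \gamma + 1/(2n) + O(n^{-2})$.
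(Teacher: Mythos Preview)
Your plan is correct and, in fact, more detailed than anything in the paper: the authors do not prove this lemma at all but simply declare it ``straightforward'' and refer to earlier work. Your outline---constant contribution on the near block, the substitution $k=N-r$ on the far block turning the sum into a harmonic tail $H_{N-1}-H_{\lfloor N/2\rfloor}$ (plus explicit rational corrections for the midpoint and antipode when $N$ is even), followed by the Euler--Maclaurin expansion of $H_n$ with no $n^{-3}$ term---is exactly how such a computation goes, and carrying it out does yield $4c\log 2 - 2c/N - c/N^2 + O(N^{-4})$ for odd $N$ and $4c\log 2 - 2c/N - 2c/N^2 + O(N^{-4})$ for even $N$, with the $1/N^3$ pieces cancelling as you anticipated.

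One small point of care: in your description of the far block you wrote the harmonic tail as $\sum_{k=\lceil N/2\rceil+1}^{N-1} 1/k$, which is the correct range for even $N$ but is off by one term for odd $N$ (there the tail runs from $(N+1)/2$, not $(N+3)/2$). This is pure bookkeeping and you clearly have the right idea, but since the whole lemma is bookkeeping, make sure the indices are written separately and exactly for each parity when you execute the computation.
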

      \hfill$\Box$

      This    result implies that in the critical case, i.e., when $$c = c^{cr}=\dfrac{1}{4 \log 2},$$  we have
      \begin{equation}\label{Ecr}
		\mathbb{E}   \left | \mathcal{N}(v)  \right |  = \sum_{u \in V_N}p(v, u)=1- \dfrac{2c}{N} -  O \left (\dfrac{1}{N^2} \right ).
	\end{equation}

        For further reference let us  collect here similar results of straightforward computations.

        \begin{Prop}\label{Prop1}
          For all $c>0$
	\begin{equation}\label{Pr1a2}
         \sum_{u\in V_N} p^2(v, u)   = \sum_{r = 1}^{N} N_r p_r^2 =
          \dfrac{4 c^2 \log N}{N^2} + O \left (\dfrac{1}{N^2} \right ).
              \end{equation}
              For any $u \in V_N $ and any subset $A \subseteq V_N$
	\begin{equation}\label{Pr1a1}
		\dfrac{c |A|}{N^2} \leq \sum\limits_{u \in A} p(v, u) \leq \dfrac{4 c \sqrt{|A|}}{N}.
	\end{equation}     
      \end{Prop}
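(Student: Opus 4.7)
The plan is to exploit the spherical decomposition around $v$: both sums reduce to one-dimensional sums indexed by $r = \rho(u,v)$ with multiplicity $N_r$, and the explicit formulas for $N_r$ and $p_r$ reduce everything to elementary manipulations.

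For \eqref{Pr1a2}, I first observe that for $N$ sufficiently large (namely $N > c$) the cutoff in $p_r = \min\{c/(Nr),1\}$ is inactive for every $r \geq 1$, so $p_r = c/(Nr)$ throughout and
$$\sum_{r=1}^N N_r p_r^2 = \frac{c^2}{N^2}\sum_{r=1}^N \frac{N_r}{r^2}.$$
I would split the sum at $r = \lfloor N/2 \rfloor$. On the range $r \leq N/2$, where $N_r = 4r$ (with a negligible correction at $r = N/2$ for even $N$), the contribution is $4\sum_{r \leq N/2} 1/r = 4\log N + O(1)$ by the standard harmonic asymptotics. On the range $r > N/2$, where $N_r \leq 4(N-r) + O(1)$ and $r \geq N/2$, the substitution $s = N-r$ shows the contribution is bounded by a finite constant. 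Combining and multiplying by $c^2/N^2$ yields the stated asymptotic; for the few small $N$ not covered by $N > c$, the discrepancy is absorbed into the $O(1/N^2)$ error.

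For the lower bound in \eqref{Pr1a1}, I use that the torus distance satisfies $\rho(v,u) \leq N$, so $c/(N\rho(v,u)) \geq c/N^2$, and hence $p(v,u) \geq c/N^2$ regardless of which branch of the minimum is active; summing over $u \in A$ gives the bound. For the upper bound, I would apply a rearrangement argument: since $p_r$ is nonincreasing in $r$, the sum $\sum_{u \in A} p(v,u)$ is maximized when $A$ consists of the $|A|$ vertices closest to $v$. A direct check gives $N_r p_r \leq 4c/N$ for every $r$ (with equality for $r \leq N/2$), and $\sum_{r=1}^R N_r = 2R(R+1)$ in the initial range. Choosing $R$ as the smallest integer with $\sum_{r=1}^R N_r \geq |A|$ then forces $R = O(\sqrt{|A|})$, so $\sum_{u \in A} p(v,u) \leq \sum_{r=1}^R N_r p_r \leq 4cR/N \leq 4c\sqrt{|A|}/N$, after verifying the few small values of $|A|$ separately to ensure the constant $4c$ is not exceeded.

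There is no substantive obstacle here; both parts are routine, and the only care needed is the $O(1)$ bookkeeping in the harmonic sum for \eqref{Pr1a2} and the small-$|A|$ verification of the constant in \eqref{Pr1a1}.
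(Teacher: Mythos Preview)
Your proposal is correct, and the paper itself gives no proof of this proposition at all (it is stated and immediately closed with a $\Box$, treated as a straightforward computation), so there is nothing to compare against; your sketch supplies exactly the routine details the authors omit. One small correction: the case requiring separate verification in the upper bound of \eqref{Pr1a1} is not small $|A|$ but large $|A|$ (when the optimal $R$ exceeds $N/2$ and the formula $\sum_{r=1}^{R-1} N_r = 2(R-1)R$ no longer applies); there you can instead observe that $|A| \geq N^2/2$ forces $4c\sqrt{|A|}/N \geq 2\sqrt{2}\,c > 4c\log 2$, which by Lemma~\ref{Lm_expectation_n_vertices} dominates the full sum $\sum_{u\in V_N} p(v,u)$ for large $N$.
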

      \hfill$\Box$
      
      We shall also use result of Lemma 1 from \cite{ANT}, stating that for any 
      $A \subseteq V_N$ with $|A|=o(N^2)$ it holds that for any $v \in V_N $ 
      	\begin{equation}\label{L1ANT}
	\sum\limits_{u \in A} p(v, u) =o(1).
	\end{equation}  

\subsection{Breadth first walk}\label{SectionBFW}
Given a graph $G_N^c$  let us describe an algorithm of revealing its connected components.

At the first  step $k=1$  choose vertex $v_1 \in V_N$ uniformly. 
Then we identify  the set of vertices  which are connected to $v_1$: 
\begin{equation*}\label{v1}
\mathcal{N}(v_1, 1) = \left \{ u : u \sim v_1 \; \text{in} \; G_N \right \}
=\mathcal{N}(v_1),
\end{equation*}
and declare vertex $v_1$ to be ``saturated''. 
This ends step 1. 

At the end of each step $k$ we shall partition the set of vertices of the graph into  the set of  unexplored vertices
and the set of used vertices, using the following notation:

$U_k$ is the set of unexplored ({\it available}) vertices,
	
$I_k$ is the set of used vertices.

\noindent
Furthermore, all the vertices within the set of used vertices $I_k$ which are not saturated 
we shall call "active",  denoting $\mathcal{A}_{k}$ their set.
Hence, at the end of the first step we set
 \begin{equation*}\label{v12}
I_1 = \{v_1\} \cup \mathcal{N}(v_1, 1),
\end{equation*}
\begin{equation*}\label{U1}
U_1 = V_N \setminus I_1, 
\end{equation*}
 \begin{equation*}\label{A1}
\mathcal{A}_1 = I_1 \setminus \{v_1\}.
\end{equation*}

Suppose at the end of $k$th step we have  defined $v_{k}, \mathcal{A}_k, U_k, I_k$. Then at the step $k+1$ we define 
$v_{k+1}, \mathcal{A}_{k+1}, U_{k+1}, I_{k+1}$ as follows.

\medskip

\noindent
Case I. If $\mathcal{A}_k \neq \emptyset$ we choose vertex $v_{k+1} $ {\it uniformly} among the vertices of 
$\mathcal{A}_k$  which  are at the {\it minimum} graph-distance from the root of the  current tree. (In other words, considering the revealed
tree-graph as a branching process,  we select vertices by generations, firstly using all vertices of the first generation, then all vertices of the second generation, and so on.)
Then we identify the set of vertices connected to $v_{k+1}$ in $G_N^c$ among the {\it available} vertices:
	\begin{equation}\label{Tse8}
	\mathcal{N}(v_{k+1}, k+1) = \left \{ u : u \sim v_{k+1} \right \} \cap U_k = \mathcal{N}(v_{k+1})\cap U_k.
\end{equation}
Then we define
\begin{equation*}\label{v13}
I_{k+1} = I_k \cup \mathcal{N}(v_{k+1}, k+1),
\end{equation*}
\begin{equation*}\label{U12}
U_{k+1}= V_N \setminus I_{k+1}, 
\end{equation*}
and
 \begin{equation*}\label{A12}
\mathcal{A}_{k+1} = I_{k+1} \setminus \{v_1, \ldots, v_{k+1}\} = \mathcal{A}_{k}  \setminus \{v_{k+1}\}\cup  \mathcal{N}(v_{k+1}, k+1).
\end{equation*}

\noindent
Case II. (a) If $\mathcal{A}_k = \emptyset$ and $U_k=\emptyset$ then we stop the algorithm.

(b) If $\mathcal{A}_k = \emptyset$ while $U_k  \neq \emptyset$ we first choose vertex $v_{k+1}$ {\it uniformly} in $U_k$, and then identify the set of vertices connected to $v_{k+1}$ in graph $G_N^c$ among the available vertices:
	\begin{equation*}\label{Tse7}
	\mathcal{N}(v_{k+1}, k+1) =  \mathcal{N}(v_{k+1})  \cap U_k .
\end{equation*}
Observe that here $v_{k+1}\in U_k$ unlike in (\ref{Tse8}). 

Finally, we define
\begin{equation*}\label{v14}
I_{k+1} = I_k \cup \{v_{k+1}\}  \cup \mathcal{N}(v_{k+1}, k+1),
\end{equation*}
\begin{equation*}\label{U13}
U_{k+1}= V_N \setminus I_{k+1}, 
\end{equation*}
and
 \begin{equation*}\label{A13}
\mathcal{A}_{k+1} = I_{k+1} \setminus \{v_1, \ldots, v_{k+1}\}=
\mathcal{N}(v_{k+1}, k+1).
\end{equation*}

This finishes $(k+1)$-st step of the algorithm.

\medskip
We shall also keep record of the vertices which are the roots of all trees revealed by our algorithm at the end of the $k$-th  step. More precisely, define for each $k\geq 1$ set $\mathcal{R}_k $ as follows. Set  $\mathcal{R}_1=\{v_1\} .$
Then 
 \begin{equation*}\label{Tm4}
\mathcal{R}_{k+1} = 
\left\{
\begin{array}{ll}
\mathcal{R}_{k} \cup \{v_{k+1}\} , & \mbox{ if } \mathcal{A}_k = \emptyset, \\
\mathcal{R}_{k}, & \mbox{ otherwise.} 
\end{array}
\right.
\end{equation*}
This gives us another representation for the set of used vertices:
\begin{equation}\label{Tm5}
I_{k} = \bigcup_{i=1}^k \mathcal{N}(v_i) \cup \mathcal{R}_{k}= \bigcup_{i=1}^k \mathcal{N}(v_i, i) \cup \mathcal{R}_{k},
\end{equation}
where the last is a decomposition into disjoint sets.

As long as consecutively $\mathcal{A}_{1} \not =\emptyset, \ldots \mathcal{A}_{k} \not =\emptyset$,
at the end of each step $k\geq 1$ we have a tree with the set of vertices $I_k$, and the edges from $v_i$ to  $\mathcal{N}(v_{i}, i)$, $i\leq k$,  inherited from the graph $G_N^c$.
By this construction we find all the vertices of the connected component of graph $G_N^c$ which contains $v_1=\mathcal{R}_{1}$,   and the 
 size of this component, denote it $C_1$, is equal to
\begin{equation}\label{C1}
C_1=\min \{k: \mathcal{A}_k = \emptyset\} = \min \{k: \left | \mathcal{A}_k \right | = 0\}= \min \{k: \left | I_k \right | = k\}.
\end{equation}

To count the number of vertices in the consecutively revealed connected components of $G_N$ we introduce the following process:
\[z(1)=0,\]
\begin{equation}\label{z}
z(k+1) = z(k) -1+ \left | \mathcal{N}(v_k, k) \right | =-k + \sum\limits_{i=1}^k \left | \mathcal{N}(v_i, i) \right | , \ \ 1\leq k<n.
\end{equation}
Observe that for all $k\geq 1$ as long as $z(k)\geq 0$ we have $$z(k)=|\mathcal{A}(k-1)|-1.$$ Hence, by (\ref{C1}) we have 
\begin{equation*}\label{C11}
C_1=\min\{k>0: z(k)=-1\}-1.
\end{equation*}
Furthermore, setting $\tau_0=1$ and for all $m\geq 1$
\begin{equation*}\label{C112}
\tau_m=\min\{k>0: z(k)=-m\},
\end{equation*}
 we get the sizes $C_m$ of consecutively revealed components
by the formula
\begin{equation*}\label{Cn}
C_m=\tau_{m}-\tau_{m-1}.
\end{equation*}

\subsection{Tree terminology.}\label{TT}

As we described above at the $k$-th step of the algorithm we have a collection of connected components of $G_N^c$, each is represented by a tree
 with root in the set $\mathcal{R}_k$. Hence, we can represent the set of revealed vertices
 $I_k$ as vertices of the trees: 
\begin{equation}\label{Tree}
T_k=\bigcup_{v_i\in \mathcal{R}_k} T_k(v_i),
\end{equation}
where $T_k(v_i)$ denotes a tree at step $k$ with root at $v_i$.
We shall also set directions for the edges of 
$T_k(v_i)$ from the root to the first generation and so on further to the leaves.

\begin{Def}\label{Conn}
  We say that two vertices $u, v \in T_k$ are directly connected if there is a (unique) path of directed edges 
  between them.
\end{Def}

If there is a path from $u$ to $v$ we denote it  $(u\rightarrow v)_{T_k}$.

\begin{Def}\label{Detpath}
The number of the edges in
path $(u\rightarrow v)_{T_k}$ 
we call the size of the path, and denote it
  $\left\|(u\rightarrow v)_{T_k}\right\|$.
  \end{Def}

Let us define the $T_k$ - distance between vertices.
  
  First we note that 
  if a pair of vertices $u, v \in T_k$ is
  in the same component of $T_k$, then there is a set of vertices, which 
  are directly connected to both: denote this set
  \[A_k(u,v)=\{a\in T_k: (a \rightarrow v)_{T_k}, (a \rightarrow u)_{T_k}\}\]
  and call it ancestry of $u, v$.

\begin{Def}\label{DetpathSize}
  The $T_k$ - distance between two vertices $u, v $ which  belong  the same component of $T_k$  is the number
  \[|u,v|_{T_k}:= \min_{a\in A_k(u,v)} \left(
    \left\|(a\rightarrow u)_{T_k}\right\|+\left\|(a\rightarrow v)_{T_k}\right\|\right).
  \]
  
  Otherwise, when  $u$ and $v$ belong to different connected components of $T_k$ we set
  $|u,v|_{T_k}=\infty$.
\end{Def}

Notice, that if $u$ is connected to $v$ by a path $(u\rightarrow v)_{T_k}$ then by Definition \ref{DetpathSize} the $T_k$ - distance between them is simply the size of this path.

Finally, for any $v_i$ we denote $R(v_i)$
the root of tree in the set (\ref{Tree}) that vertex $v_i$ belongs to. 

\subsection{Martingale associated with the breadth first walk.}\label{SectionMartingale}

Let $\{\mathcal{F}_k^z, \; k \geq 0 \}$ be the filtration  generated by the process $z$, where
\begin{equation*}\label{Fz}
 \mathcal{F}_k^z = \sigma\{ z(i), i=1, \ldots,  k\}.
\end{equation*}

Denote for any function $f:\mathbb{Z} \rightarrow \mathbb{R}$
\[\Delta f(i)=f(i+1)-f(i),  \ \ i\in \mathbb{Z}.\]
Next we define the martingale sequence $\mathcal{M}(k)$ associated with filtration $\mathcal{F}^z$: set $\mathcal{M}(0)=0$ and for all $k\geq 1$
\begin{equation}\label{Ms}
\Delta \mathcal{M}(k) = \Delta z(k)- \mathbb{E} \left\{ \Delta z(k)\left.  \right|  \mathcal{F}_{k}^z \right \}.
\end{equation}
This gives us the representation for the process $z(k)$, $k\geq 2$:
\begin{equation}\label{ms}
z(k) =  \sum_{i=1}^{k-1} \Delta z(i)=\mathcal{M}(k) + \sum_{i=1}^{k-1}\mathbb{E} \left\{ \Delta z(i)\left.  \right|  \mathcal{F}_{i}^z \right \}= : \mathcal{M}(k) + \mathcal{D}(k).
\end{equation}

Define the following rescaled processes on an arbitrarily fixed interval $[0,T]$
\[
\begin{array}{ll}
{\widetilde {\cal D}} (s)& = {n^{-1/3}} \ {\cal D} (1+ [  n^{2/3} s ] ),   \\ \\
{\widetilde {\cal M}} (s)&  = {n^{-1/3}}
 {\cal M}(1+ [ n^{2/3} s ] ), \ \ \ \ s\in [0,T],
\end{array}
\]
where $n=|V(N)|=N^2$.
We  study these processes separately in the following Lemmas.
\begin{Lemma}\label{dr}
  On an arbitrarily fixed interval $[0,T]$
\begin{equation}\label{M5}
{\widetilde {\cal D}} (s) \stackrel{P}{\rightarrow} -
\frac{1}{2}
s^2.
\end{equation}
uniformly in $s\in [0,T]$
 as $n\rightarrow \infty$.
\end{Lemma}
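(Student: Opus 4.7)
The plan is to compute the conditional drift of $z$ explicitly, reduce it via \eqref{Ecr} to an ``overlap'' term $\sum_{u\in I_{k-1}}p(v_k,u)$, and then invoke the mixing of the BFW on $\mathbb{T}_N^2$ to show that this overlap is well approximated by its mean-field value $|I_{k-1}|/n\approx k/n$, thereby producing the quadratic drift $-s^2/2$ in the rescaling. Since the edges from $v_k$ to $U_{k-1}$ have not been queried before step $k$,
\[
\mathbb E\Big[\,\Delta z(k)\,\Big|\,\mathcal F_k^z\Big]=-1+\mathbb E\Big[\sum_{u\in U_{k-1}}p(v_k,u)\,\Big|\,\mathcal F_k^z\Big],
\]
and splitting $V_N=U_{k-1}\sqcup I_{k-1}$ and using \eqref{Ecr} yields
\[
\mathbb E\Big[\,\Delta z(k)\,\Big|\,\mathcal F_k^z\Big]=-\frac{2c}{N}-\mathbb E\Big[\sum_{u\in I_{k-1}}p(v_k,u)\,\Big|\,\mathcal F_k^z\Big]+O(N^{-2}).
\]
Summing over $k\le 1+[n^{2/3}s]$ and dividing by $n^{1/3}$, the deterministic $-2c/N$ piece contributes $O(n^{-1/6}s)\to 0$ uniformly on $[0,T]$, so only the overlap term matters.

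The underlying heuristic is that at criticality only $O(1)$ components are discovered before time $n^{2/3}$, so $|\mathcal R_{k-1}|$ is tight; combining this with $|I_k|=z(k+1)+k+|\mathcal R_k|$ and the expected $n^{1/3}$-scale of $z$ gives $|I_{k-1}|=k+O(n^{1/3})$. Moreover, if $I_{k-1}$ were a uniform random subset of $V_N$, translation invariance combined with \eqref{Ecr} would yield
\[
\sum_{u\in I_{k-1}}p(v_k,u)\approx\frac{|I_{k-1}|}{n}\sum_{u\in V_N}p(v_k,u)\approx\frac{k}{n}.
\]
Summing gives $\sum_{k\le n^{2/3}s}k/n\sim n^{1/3}s^2/2$, which after division by $n^{1/3}$ produces the claimed limit $s^2/2$ (with the overall minus sign supplied by the drift decomposition above).

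The main obstacle is upgrading this heuristic to a rigorous second-moment bound of the form
\[
\mathbb E\!\left[\left(\sum_{u\in I_{k-1}}p(v_k,u)-\frac{|I_{k-1}|}{n}\right)^{\!2}\right]=o(n^{-2/3}),
\]
uniformly in $k\le n^{2/3}T$. Because the model is not of rank one, $I_{k-1}$ is not a uniform random subset: it is the trace of a BFW whose offspring kernel is $p(\cdot,\cdot)$, and $v_k$ is chosen by a generation-ordered rule from $\mathcal A_{k-1}$, so a priori one cannot rule out clustering near $\mathcal R_{k-1}$. The argument will combine (i) the mixing of the random walk on $\mathbb T_N^2$ with kernel proportional to $p(\cdot,\cdot)$ developed in Sections \ref{SectionMC}--\ref{SectionRWT}, where the Cauchy-type tail $p(u,v)\sim c/(N\rho(u,v))$ ensures that after $O(1)$ generations the walk position is essentially uniform on $V_N$; (ii) the bounds \eqref{Pr1a2}--\eqref{L1ANT} to handle the contribution of the few vertices of $I_{k-1}$ close to $v_k$; and (iii) the tightness of $|\mathcal R_{k-1}|$ on the time scale $n^{2/3}$. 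Uniformity in $s\in[0,T]$ will then follow from a tightness/Dini-type argument based on the near-monotonicity of $\widetilde{\mathcal D}$ in $s$.
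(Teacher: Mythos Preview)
Your overall strategy---write the conditional drift as $-2c/N$ minus the overlap $\mathbb E\big[\sum_{u\in I_{k-1}}p(v_k,u)\,\big|\,\mathcal F_k^z\big]$, then use the mixing of the BFW on $\mathbb T_N^2$ to replace the overlap by its mean-field value $|I_{k-1}|/n\approx k/n$---is exactly the paper's route; see \eqref{TSe4}, \eqref{Tm1} and the decomposition \eqref{TSe10}. Two points deserve attention.

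First, your heuristic ``only $O(1)$ components are discovered before time $n^{2/3}$, so $|\mathcal R_{k-1}|$ is tight'' is not correct: for $k\le Tn^{2/3}$ one has $|\mathcal R_k|\le 1-\min_{i\le k+1}z(i)$, which is $O_P(n^{1/3})$ (Lemma~\ref{Lbz}), not $O_P(1)$. Fortunately $O_P(n^{1/3})$ is exactly what you need, so the conclusion $|I_{k-1}|=k+O_P(n^{1/3})$ and the resulting summation argument survive unchanged.

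Second, the paper does \emph{not} bound your proposed second moment $\mathbb E\big[(\sum_{u\in I_{k-1}}p(v_k,u)-|I_{k-1}|/n)^2\big]$ directly; doing so would require control of the full joint law of $(v_k,I_{k-1})$. Instead it expands $I_{j-1}$ via \eqref{Tm5} as $\bigcup_{i<j}\mathcal N(v_i,i)\cup\mathcal R_{j-1}$, obtaining the four pieces $\mathcal D_1,\dots,\mathcal D_4$ in \eqref{TSe10}. The dominant piece is $\mathcal D_1(k+1)=\sum_{j\le k}\sum_{i<j}\mathbb E\big[|\mathcal N(v_j)\cap\mathcal N(v_i)|\,\big|\,\mathcal F_j^z\big]$, whose analysis reduces to the \emph{pairwise} asymptotic independence of $(v_i,v_j)$ established in Theorem~\ref{ThIndep}; Lemmas~\ref{ThExpectationY_N}--\ref{Lm_Convergence_Y_N} then give $\mathcal D_1/n^{1/3}\to s^2/2$ in probability, while $\mathcal D_2,\mathcal D_3,\mathcal D_4$ are shown to be $O(\log^4 N)$ and hence negligible after division by $n^{1/3}$. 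This decomposition is what converts your program into an argument that only needs the two- and three-point mixing of Sections~\ref{SectionMC}--\ref{SectionRWT}; you should either follow it or indicate concretely how you would control the joint law of $(v_k,I_{k-1})$ otherwise.
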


\begin{Lemma}\label{ma}
  On an arbitrarily fixed interval $[0,T]$
\begin{equation}\label{M4}
{\widetilde {\cal M}} (s) 
\stackrel{d}{\rightarrow}
\ W(s),
\end{equation}
uniformly in $s\in [0,T]$
 as $n\rightarrow \infty$.
\end{Lemma}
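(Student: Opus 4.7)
The plan is to apply a standard martingale functional central limit theorem (for example, Theorem 7.1.4 of Ethier--Kurtz, or Rebolledo's theorem) to the rescaled martingale $\widetilde{\mathcal{M}}$. Since the candidate limit is standard Brownian motion, two ingredients on any fixed interval $[0, T]$ are required: (i) convergence in probability of the predictable quadratic variation to the linear function $s$, and (ii) a Lindeberg-type negligibility of the rescaled jumps.

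For (i), observe that conditionally on $\mathcal{F}_k^z$ the increment $\Delta z(k) = |\mathcal{N}(v_{k+1}, k+1)| - 1$ is, up to a deterministic shift, a sum of independent Bernoulli variables with parameters $p(v_{k+1}, u)$, $u \in U_k$, so
\begin{equation*}
\operatorname{Var}\!\bigl(\Delta \mathcal{M}(k) \,\big|\, \mathcal{F}_k^z\bigr) = \sum_{u \in U_k} p(v_{k+1}, u)\bigl(1 - p(v_{k+1}, u)\bigr).
\end{equation*}
Torus symmetry together with (\ref{Ecr}) gives $\sum_{u \in V_N} p(v_{k+1}, u) = 1 - O(1/N)$ uniformly in the choice of $v_{k+1}$, and (\ref{Pr1a2}) gives $\sum_u p^2(v_{k+1}, u) = O(\log N / N^2)$. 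The remaining contribution $\sum_{u \in I_k} p(v_{k+1}, u)$ is controlled by (\ref{L1ANT}) as soon as $|I_k| = o(N^2)$ uniformly in $k \leq T n^{2/3}$. Such an a priori bound can be read off from $|I_k| \leq k + z(k) + 1$ (which follows from (\ref{z})) together with $\sup_{k \leq T n^{2/3}} z(k) = O_P(n^{1/3})$, which is a consequence of the drift Lemma \ref{dr} already established in Section \ref{SectionAsymPropDrift}. Collecting these estimates, $\operatorname{Var}(\Delta \mathcal{M}(k) | \mathcal{F}_k^z) = 1 - o(1)$ uniformly in the relevant range of $k$, and summing produces $n^{-2/3}\langle \mathcal{M}\rangle_{[n^{2/3} s]} \xrightarrow{P} s$ uniformly in $s \in [0, T]$, as required.

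For (ii), since $|\mathcal{N}(v_{k+1})|$ is a sum of independent Bernoullis whose first few moments are bounded uniformly in $N$ (via Proposition \ref{Prop1}), a fourth-moment Markov bound gives $\mathbb{P}(|\Delta \mathcal{M}(k)| > \varepsilon n^{1/3}) = O(n^{-4/3})$, so that
\begin{equation*}
n^{-2/3} \sum_{k=1}^{[n^{2/3} T]} \mathbb{E}\!\left[(\Delta \mathcal{M}(k))^2 \mathbf{1}_{\{|\Delta \mathcal{M}(k)| > \varepsilon n^{1/3}\}}\right] \longrightarrow 0.
\end{equation*}
The main obstacle is the uniform-in-$k$ control required in (i): because $v_{k+1}$ is selected according to the history-dependent breadth-first rule rather than uniformly on $V_N$, and because the model is not of rank one, the sum $\sum_{u \in I_k} p(v_{k+1}, u)$ cannot be handled by a simple averaging argument. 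This is precisely where the mixing properties of the random walk on $\mathbb{T}_N^2$ developed in Sections \ref{SectionMixing}--\ref{SectionProof29} are needed, in conjunction with the a priori size bound on $I_k$. Once (i) and (ii) are in place, the martingale FCLT delivers weak convergence of $\widetilde{\mathcal{M}}$ to $W$ in $D[0,T]$, and continuity of the Brownian limit upgrades this to convergence in the uniform topology on $[0, T]$.
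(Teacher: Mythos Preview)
Your overall strategy---apply a martingale FCLT by verifying quadratic-variation convergence and jump negligibility---is exactly the paper's approach. However, two points in your execution are off.

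First, your displayed formula for the conditional variance is not correct as written: neither $v_{k+1}$ nor $U_k$ is $\mathcal{F}_k^z$-measurable (recall $\mathcal{F}_k^z=\sigma(z(1),\ldots,z(k))$ is strictly smaller than $\mathcal{F}_{k}$), so $\sum_{u\in U_k}p(v_{k+1},u)(1-p(v_{k+1},u))$ is itself random given $\mathcal{F}_k^z$. The paper handles this by computing $\mathbb{E}(|\mathcal{N}(v_j,j)|\mid\mathcal{F}_j^z)$ and $\mathbb{E}(|\mathcal{N}(v_j,j)|^2\mid\mathcal{F}_j^z)$ in Lemma~\ref{Lm_Est_ENvjjFjz} and showing both are controlled by $\sqrt{|I_{j-1}|}/N$, where $|I_{j-1}|$ \emph{is} $\mathcal{F}_j^z$-measurable; the conditional variance then comes out as $1+O(\sqrt{|I_{j-1}|}/N)$. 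Your conclusion is right, but the route to it needs this correction.

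Second, and more substantively: you do not need the mixing machinery of Sections~\ref{SectionMixing}--\ref{SectionProof29} here at all. The bound $\sum_{u\in I_{k}}p(v,u)\le 4c\sqrt{|I_k|}/N$ from (\ref{Pr1a1}) is \emph{uniform in the centre vertex} $v$, so the history-dependence of $v_k$ is irrelevant for this estimate. Combined with the elementary bound $|I_k|=O_P(n^{2/3})$ from Corollary~\ref{TC1} (Poisson domination, proved long before any mixing), this already gives $\sqrt{|I_k|}/N=O_P(N^{-1/3})$. Your detour through $|I_k|\le k+z(k)+1$ and $\sup z=O_P(n^{1/3})$ as a ``consequence of Lemma~\ref{dr}'' is both unnecessary and potentially circular (bounding $z$ requires bounding $\mathcal{M}$). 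The mixing results are essential for Lemma~\ref{dr}---specifically for the $\mathcal{D}_1$ term---but the martingale part in Lemma~\ref{ma} uses only Lemmas~\ref{Lm_Est_ENvjjFjz}, \ref{Lm_Est_ENvjj} and Corollary~\ref{TC1}.
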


The results (\ref{M5}) and (\ref{M4})  imply 
\begin{equation}\label{Ts1}
{n^{-1/3}} \ z (1+ [ n^{2/3} s ])\stackrel{d}{\rightarrow} W(s)-
\frac{1}{2}
s^2,
\end{equation}
from where the statement of Theorem \ref{T} follows (as  in \cite{Aldous}).
Therefore we aim to prove the last two lemmas. 

\subsection{Drift $\mathcal{D}(k)$}\label{SectionDrift}

Let $k>2$. By the definition (\ref{z}) we have 
\begin{equation*}\label{Tf1}
\mathbb{E} \left\{ \Delta z(k)\left.  \right|  \mathcal{F}_k^z \right \}=-1 +  \mathbb{E} \left\{  \left | \mathcal{N}(v_k, k) \right | \big| \mathcal{F}_{k}^z  \right\} ,
\end{equation*}
and therefore using also  (\ref{ms}) we get 
\[
\mathcal{D}(k+1) =\sum\limits_{j=1}^{k}  
\mathbb{E} \left\{ \Delta z(j)\left.  \right|  \mathcal{F}_j^z \right \}
\]
\begin{equation}\label{Dk}
= -k +\mathbb{E}  | \mathcal{N}(v_1)  |    + \sum\limits_{j=2}^{k}  \mathbb{E} \left\{  \left | \mathcal{N}(v_j, j) \right | \big| \mathcal{F}_{j}^z  \right\} .
\end{equation}
Let us use notation
\[\mathcal{N}_r(v_j) = \mathcal{N}  (v_j) \cap  {N}_r(v_j) , \]
\[\mathcal{N}_r(v_j,j) = \mathcal{N}  (v_j,j) \cap  {N}_r(v_j) .\]

Observe that 
\begin{equation*}\label{FUI}
\mathcal{F}_j^z \subseteq \mathcal{F}_{j-1}:=\sigma\{ I_i, v_i, i=1, \ldots,  j-1\} \subseteq \mathcal{F}_{j},
\end{equation*}
and by the construction 
\begin{equation*}\label{Tse9}
\mathcal{N}_r(v_j,j) \in Bin (| U_{j-1} \cap {N}_r(v_j)|, p_r) .
\end{equation*}

Hence,   using decomposition 
\begin{equation}\label{TSe2}
\mathcal{N}(v_j, j) = \mathcal{N}(v_j) 
\cap U_{j-1} =  \bigcup_{r=1}^N\mathcal{N}_r(v_j) 
\setminus \left( \mathcal{N}_r(v_j) \cap I_{j-1} \right) 
\end{equation}
we get for all $j>1$
\[\mathbb{E} \left\{  \left | \mathcal{N}(v_j, j) \right | \big| \mathcal{F}_{j}^z  \right\} =
\mathbb{E} 
\left\{ \mathbb{E} \left\{ 
\left | \mathcal{N}(v_j) \cap U_{j-1}\right | \big| \mathcal{F}_{j-1}  
\right\} 
\big|\mathcal{F}_{j}^z  \right\} 
\]

\[= \sum_{r=1}^Np_r
\mathbb{E} 
\left\{ |N_r(v_j)|
\big|\mathcal{F}_{j}^z  
\right\} -\sum_{r=1}^Np_r
\mathbb{E} 
\left\{ |N_r(v_j)\cap I_{j-1}|
\big|\mathcal{F}_{j}^z  
\right\} \]
\begin{equation}\label{TSe4}
= \sum_{r=1}^Np_rN_r
-\sum_{r=1}^Np_r
\mathbb{E} 
\left\{ 
|N_r(v_j)\cap I_{j-1}|
\big|\mathcal{F}_{j}^z  
\right\}, 
\end{equation}
where the last equality is due to (\ref{TS1}).
With a help of  representation (\ref{Tm5})  we get for all $j>1$
\[
\mathbb{E} \left\{ \left| {N}_r(v_j) \cap I_{j-1} \right| \big| \mathcal{F}_{j}^z  \right\}
\]
\begin{equation}\label{Tm1}
= \sum_{i=1}^{j-1}
\mathbb{E} \left\{ \left|{N_r}(v_j) \cap \mathcal{N}(v_i,i)\right|  \big| \mathcal{F}_{j}^z  \right\}
\end{equation}
\[
+\mathbb{E} \left\{ \left|{N_r}(v_j) \cap \{v_1\} \right| \big| \mathcal{F}_{j}^z  \right\}
+\sum_{i=2}^{j-1} 
\mathbb{E} \left\{ {\bf I}_{{\cal A}_{i-1}=\emptyset} \left|{N_r}(v_j) \cap \{v_i\} \right| \big| \mathcal{F}_{j}^z  \right\},
\]
where again by (\ref{TSe2}) and representation (\ref{Tm5}) we have
\begin{equation}\label{TSe3}
\mathbb{E} \left\{ \left|{N_r}(v_j) \cap \mathcal{N}(v_i,i)\right|  \big| \mathcal{F}_{j}^z  \right\}
\end{equation}
\[= \mathbb{E} \left\{ 
\mathbb{E} \left\{ \left|{N_r}(v_j) \cap \mathcal{N}(v_i)  \right| \big| \mathcal{F}_{j-1}  \right\} \big| \mathcal{F}_{j}^z  \right\}
\]
\[- \mathbb{E} \left\{ 
\sum_{s=1}^N\sum_{l=1}^{i-1}\mathbb{E} \left\{ \left|{N_r}(v_j) \cap \mathcal{N}_s(v_i) \cap \mathcal{N}(v_l,l) 
\right| \big| \mathcal{F}_{j-1}  \right\} \big| \mathcal{F}_{j}^z  \right\}
\]
\[- \mathbb{E} \left\{ 
\sum_{s=1}^N\sum_{l=1}^{i-1}\mathbb{E} \left\{ {\bf I}_{ {\cal A}_{l-1}= \emptyset }\left|{N_r}(v_j) \cap \mathcal{N}_s(v_i) \cap \{v_l\}
\right| \big| \mathcal{F}_{j-1}  \right\} \big| \mathcal{F}_{j}^z  \right\}.
\]
\begin{Prop}\label{Po1}
	For all $1\leq i< j \leq n$ 
	\begin{equation*}
	\left\{ {\cal A}_{i}= \emptyset \right\} \in \mathcal{F}_{j}^z.
	\end{equation*}
	
	\begin{proof}
		\[
		\left\{ {\cal A}_{i}= \emptyset \right\} = \left \{ z(i+1) = \min\limits_{l \leq i} z(l+1) \right \} \in \mathcal{F}_{j}^z ,
		\]
		because $i+1 \leq j$.
	\end{proof}
\end{Prop}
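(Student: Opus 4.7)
The plan is to express the event $\{\mathcal{A}_i=\emptyset\}$ entirely as a function of the values $z(1),\ldots,z(i+1)$. Since $i+1\leq j$, those values are $\mathcal{F}_j^z$-measurable by definition, so such an expression immediately establishes the proposition.

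To obtain this characterization, I would exploit the observation (already noted after the definition of $z$) that $z(k)=|\mathcal{A}_{k-1}|-1$ so long as $z(k)\geq 0$. While we are exploring a single component, $z$ is therefore a nonnegative walk which records the current size of the active set shifted by $-1$. The only times at which $z$ can drop below its running minimum are the Case II steps, which occur precisely when the active set has just been depleted; and at such a step $z$ decreases by exactly one more than $|\mathcal{N}(v_k,k)|$ would suggest, thereby establishing a strict new minimum. Hence the set of indices $i$ for which $\mathcal{A}_i=\emptyset$ coincides with the set of indices at which the value $z(i+1)$ attains the running minimum of the walk observed so far, which is the identity
\[
\{\mathcal{A}_i=\emptyset\}=\bigl\{z(i+1)=\min_{l\leq i}z(l+1)\bigr\}.
\]

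The right-hand side is manifestly an element of $\mathcal{F}_j^z$ whenever $i+1\leq j$, which proves the claim. The only delicate point is to verify the characterization carefully across the Case I/Case II boundary, checking that the $-1$ appearing in the recursion $z(k+1)=z(k)-1+|\mathcal{N}(v_k,k)|$ correctly assigns a fresh strict minimum to $z$ at each component completion and nowhere else; once this bookkeeping is done, the measurability argument is immediate and the proposition follows.
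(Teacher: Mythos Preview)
Your approach is exactly the paper's: both express $\{\mathcal{A}_i=\emptyset\}$ as a function of $z(1),\dots,z(i+1)$ via the running-minimum characterization, and then invoke $i+1\le j$.

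One caveat worth flagging (it applies equally to the paper's one-line argument and to your displayed identity): as written, the event $\{z(i+1)=\min_{l\le i}z(l+1)\}$ is \emph{not} literally equal to $\{\mathcal{A}_i=\emptyset\}$. For instance when $i=1$ the right-hand side is the whole space, and more generally weak running minima can occur when $|\mathcal{A}_i|=1$. Your own prose gets this right when you speak of a ``fresh strict minimum'': the correct identity is
\[
\{\mathcal{A}_i=\emptyset\}=\Bigl\{\,z(i+1)<\min_{1\le l\le i} z(l)\,\Bigr\},
\]
which one checks via the relation $z(k+1)=|\mathcal{A}_k|-|\mathcal{R}_k|$ together with $\min_{1\le l\le i}z(l)=1-|\mathcal{R}_i|$. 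Since this corrected expression still depends only on $z(1),\dots,z(i+1)$, the measurability conclusion is unaffected and your overall argument stands.
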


Substituting (\ref{TSe3}),  (\ref{TSe4}), (\ref{Tm1}) into (\ref{Dk}), and making use of Proposition \ref{Po1} we get
\begin{equation}\label{TSe5}
\mathcal{D}(k+1) =-k +\mathbb{E}  | \mathcal{N}(v_1)  | + \sum\limits_{j=2}^{k}  
\sum_{r=1}^Np_r N_r
\end{equation}
\[- \sum\limits_{j=2}^{k}  \sum_{r=1}^N p_r\sum_{i=1}^{j-1} \mathbb{E} \left\{ 
\sum_{s=1}^N\mathbb{E} \left\{ \left|{N_r}(v_j) \cap \mathcal{N}_s(v_i)  \right| \big| \mathcal{F}_{j-1}  \right\} \big| \mathcal{F}_{j}^z  \right\}\]
\[-\sum\limits_{j=2}^{k}  \sum_{r=1}^N p_r\sum_{i=1}^{j-1} {\bf I}_{{\cal A}_{i-1}=\emptyset} 
\mathbb{P} \left\{ v_i \in {N_r}(v_j) \big| \mathcal{F}_{j}^z  \right\}\]
\[+\sum\limits_{j=2}^{k}  \sum_{r=1}^N p_r\sum_{i=1}^{j-1}
\mathbb{E} \left\{ 
\sum_{s=1}^N\sum_{l=1}^{i-1}\mathbb{E} \left\{ \left|{N_r}(v_j) \cap \mathcal{N}_s(v_i) \cap \mathcal{N}(v_l,l) 
\right| \big| \mathcal{F}_{j-1}  \right\} \big| \mathcal{F}_{j}^z  \right\}
\]
\[+\sum\limits_{j=2}^{k}  \sum_{r=1}^N p_r\sum_{i=1}^{j-1}\mathbb{E} \left\{ 
\sum_{s=1}^N\sum_{l=1}^{i-1}\mathbb{E} \left\{ {\bf I}_{ {\cal A}_{l-1}= \emptyset }\left|{N_r}(v_j) \cap \mathcal{N}_s(v_i) \cap \{v_l\}
\right| \big| \mathcal{F}_{j-1}  \right\} \big| \mathcal{F}_{j}^z  \right\}.
\]
Taking into account the result of the Lemma \ref{Lm_expectation_n_vertices} we derive from (\ref{TSe5})
\begin{eqnarray}\label{TSe10}
&& \mathcal{D}(k+1) =-k\left(\dfrac{2c}{N} +O\left(\dfrac{1}{N^2}\right) \right) \\ \nonumber
&& - \sum\limits_{j=2}^{k}  \sum_{r=1}^N p_r\sum_{i=1}^{j-1} \mathbb{E} \left\{ 
\sum_{s=1}^N\mathbb{E} \left\{ \left|{N_r}(v_j) \cap \mathcal{N}_s(v_i)  \right| \big| \mathcal{F}_{j-1}  \right\} \big| \mathcal{F}_{j}^z  \right\} \\ \nonumber
&& -\sum\limits_{j=2}^{k}  \sum_{r=1}^N p_r\sum_{i=1}^{j-1} {\bf I}_{{\cal A}_{i-1}=\emptyset} 
\mathbb{P} \left\{ v_i \in {N_r}(v_j) \big| \mathcal{F}_{j}^z  \right\} \\ \nonumber
&& +\sum\limits_{j=2}^{k}  \sum_{r=1}^N p_r\sum_{i=1}^{j-1}
\mathbb{E} \left\{ 
\sum_{s=1}^N\sum_{l=1}^{i-1}\mathbb{E} \left\{ \left|{N_r}(v_j) \cap \mathcal{N}_s(v_i) \cap \mathcal{N}(v_l,l) 
\right| \big| \mathcal{F}_{j-1}  \right\} \big| \mathcal{F}_{j}^z  \right\} \\ \nonumber
&& +\sum\limits_{j=2}^{k}  \sum_{r=1}^N p_r\sum_{i=1}^{j-1} 
\sum_{l=1}^{i-1} {\bf I}_{ {\cal A}_{l-1}= \emptyset }
\mathbb{P} \left\{ v_l \in {N_r}(v_j) \cap \mathcal{N}(v_i) 
\big| \mathcal{F}_{j}^z  \right\}.
\end{eqnarray}
\[=: -k\left(\dfrac{2c}{N} +O\left(\dfrac{1}{N^2}\right) \right)
- \mathcal{D}_1(k+1) - \mathcal{D}_2(k+1) +
\mathcal{D}_3(k+1) +\mathcal{D}_4(k+1) .
\]

It appears clear that in order to analyse further the drift (\ref{TSe10}) and to establish (\ref{M5}) we need a joint distribution of random vertices $v_i$.

 \subsection{Mixing properties of the breadth first walk.}\label{SectionMixing}
 
 The key property of the introduced breadth first walk is the asymptotic uniformity of the distribution of vertices $v_i$ on torus, which is established in the following theorem.
 Recall that the number of vertices in graph $G_{N}^{c}$ equals $|V_N| = N^2 = n$.

\begin{Th}\label{ThIndep}
  Let $T>0$ be fixed arbitrarily.
  Then for all $a \neq b \in V_N$ 
  it holds that
  \begin{equation}\label{TM28*}
	\mathbb{P} \left \{ v_j = b \big| v_i = a \right \} \leq \max \left ( \dfrac{4 c^2 \log N}{N^2}, p(a, b) \right ) \left ( 1 + o(1) \right )
      \end{equation}
      uniformly in 
      $3 \leq i< j \leq k\leq  T|V_N|^{2/3}$.
      
      If additionally
      \[
  j - i \geq |V_N|^{1/6} \]
then 
      \begin{equation}\label{TM28}
	\mathbb{P} \left \{ v_j = b \big| v_i = a \right \} = \dfrac{1}{|V_N|} + o\left (\dfrac{1}{|V_N|} \right )
      \end{equation}
      uniformly in 
      $ |V_N|^{1/6}<i+|V_N|^{1/6} \leq j \leq k\leq  T|V_N|^{2/3}$,
      and $a \neq b \in V_N$.
\end{Th}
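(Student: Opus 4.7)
The strategy is to reduce the conditional distribution of $v_j$ given $v_i = a$ to iterates of a transition kernel on the torus, and then exploit the rapid mixing of that kernel. From (\ref{Ecr}) we have $\Lambda := \sum_{u} p(v, u) = 1 + O(1/N)$ at criticality, so the normalized step kernel
\[q(u,v) := p(u,v)/\Lambda\]
is a symmetric probability transition matrix on $V_N$, reversible with respect to the uniform distribution. Conditional on $\{v_i = a\}$ and the revealed history, the successor vertices along the BFS tree path through $v_i$ behave approximately as steps of this chain, subject to three corrections: (i) each one-step kernel is truncated to the available set $U_{k-1}$; (ii) BFS selection picks uniformly among minimum-generation active children, so BFS time and chain time are not in one-to-one correspondence; (iii) $v_j$ may lie in a different BFS tree from $v_i$.

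For the universal bound (\ref{TM28*}), I would split the conditional probability according to the length $r$ of the BFS-tree path joining $v_i = a$ to $v_j = b$. The length-one contribution (a direct edge $a \to b$) is at most $p(a,b)(1+o(1))$. For $r \geq 2$, two successive kernel steps give, by Cauchy--Schwarz and the estimate (\ref{Pr1a2}),
\[\sum_u q(a,u)\, q(u,b) \leq \Lambda^{-2}\Bigl(\sum_u p^2(a,u)\Bigr)^{1/2}\Bigl(\sum_u p^2(u,b)\Bigr)^{1/2} = \frac{4 c^2 \log N}{N^2}(1+o(1)),\]
and longer iterates are dominated by this, because further iteration only pushes the distribution toward the uniform value $1/|V_N|$, which is smaller. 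Summation over $r \leq T|V_N|^{2/3}$ preserves this bound, and (\ref{L1ANT}) absorbs the truncation to $U_{k-1}$ since $|I_k|=o(|V_N|)$.

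The sharper uniform estimate (\ref{TM28}) relies on rapid mixing of the chain with kernel $q$. One step has typical torus displacement of order $N$: indeed $\sum_r r N_r p_r$ is of order $N$ and $\sum_r r^2 N_r p_r$ is of order $N^2$, so a single step is already comparable to the diameter of the torus. By translation-invariance the kernel is diagonalized by the Fourier basis, and the calculation sketched above makes its spectral gap of order one uniformly in $N$. Consequently $\left|q^{(k)}(a,b) - 1/|V_N|\right|$ is at most an exponentially decaying function of $k$, and $k = |V_N|^{1/6} = N^{1/3}$ is vastly more than the $O(\log N)$ walk-steps needed to reduce the error to $o(1/|V_N|)$ uniformly in $a,b$. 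The quantitative work is carried out in Sections \ref{SectionMC}--\ref{SectionRWT}.

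The main obstacle is the rigorous coupling between the BFS exploration and the idealized chain $q$. Three effects must be controlled simultaneously: the truncation to $U_{k-1}$, which is negligible because $|I_k| \leq T|V_N|^{2/3} = o(|V_N|)$ and (\ref{L1ANT}) applies; the translation between BFS time-gap $j-i$ and the number of walk-steps along the tree path, which is delicate because generations can contain many vertices and the uniform choice within a generation needs to be tracked carefully; and the event that $v_j$ lies in a different BFS tree, which requires accounting for new roots chosen uniformly from $U_{k-1}$, a set of density $1-o(1)$. These are precisely the technicalities addressed in the preparatory Sections \ref{SectionBasicBFW}--\ref{SectionProof29}.
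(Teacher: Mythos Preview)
Your plan follows essentially the same architecture as the paper's proof: reduce to the Markov kernel $q(u,v)=p(u,v)/\Lambda$, decompose according to the tree-path length between $v_i$ and $v_j$, use the two-step bound $\sum_u q(a,u)q(u,b)\le (4c^2\log N)/N^2$ for~(\ref{TM28*}), and use rapid mixing of $q$ for~(\ref{TM28}). The paper proves mixing via a Doeblin minorization $P^2(u,v)\ge (c/2)\pi(v)$ (Lemma~\ref{LmLevin_Peres}) rather than your Fourier/spectral route; either works, and your Cauchy--Schwarz is just another way to get Proposition~\ref{Lm_ineq_Markov_chain}. The coupling between the BFS path and the idealized chain is made precise in the paper via the self-avoiding ``walk with memory'' $\tilde X_{\mathbf A}$ of Definition~\ref{Xtil} and the sandwich~(\ref{TJ1}), which formalizes your ``truncation is negligible'' remark.

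There is one genuine gap in your outline. You write that $j-i\ge |V_N|^{1/6}$ is ``vastly more than the $O(\log N)$ walk-steps needed'', but $j-i$ is the BFS \emph{time} gap, not the tree-path length $L=|v_i,v_j|_{T_k}$; these can differ enormously, since a single generation may hold many vertices. Nothing in your plan rules out $L\le C_1\log N$ while $j-i\ge |V_N|^{1/6}$, and on that event the chain has not mixed. The paper does \emph{not} resolve this by ``tracking the uniform choice within a generation'' as you suggest; instead it shows (Lemma~\ref{LT3}) that this event itself has probability $O(\log N/N^{1/3})$, via a pigeonhole argument: if $L$ generations contain $j-i+1$ vertices then some generation has at least $(j-i+1)/L$ vertices, and for a critical branching process $\mathbb{P}(\max_k \zeta_k>m)\sim 1/m$ by Lindvall's theorem. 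This branching-process maximum estimate is the missing ingredient in your plan, and without it the case of short tree-paths is not covered. A second, smaller point: the tree-path from $v_i$ to $v_j$ need not be directed---it may ascend to a common ancestor $v_r$ and then descend (case~(\ref{Case1b}) in the paper)---so the relevant chain starts at $v_r$, not at $v_i$; the paper handles this by noting that the longer leg $\|v_r\to v_j\|\ge L/2$ still exceeds the mixing threshold.
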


In fact, precisely the last property (\ref{TM28}) of fast mixing is determining for the construction of our breadth-first walk, and it is fundamental for the remaining proof.

Usefulness of such a simple asymptotic formula as (\ref{TM28}) for the analysis of (\ref{TSe10}) leaves no doubts.
To establish (\ref{TM28}) we shall study  separately probability as in  (\ref{TM28}) but conditioning further on different events.

Before we outline the proof of Theorem \ref{ThIndep} let us establish 
some useful facts about breadth-first walk. 

\begin{Lemma}\label{LmEstI_k2}
	For any  $1 \leq k \leq |V_N|=N^2 $ and any constant  $C>e$
	\begin{equation}\label{LmEstI_k2_ineq}
		\mathbb{P} \left ( |I_k| \geq k C \right ) \leq e^{-k \left ( C - e \right )},
              \end{equation}
              at least for all large $N$.
            \end{Lemma}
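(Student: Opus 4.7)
The plan is to derive Lemma \ref{LmEstI_k2} by a straightforward Chernoff bound on the recursive decomposition of $|I_k|$, exploiting that, conditional on the past, each increment $|\mathcal{N}(v_i,i)|$ is a sum of independent Bernoullis whose parameters sum to at most $1+o(1)$ thanks to (\ref{Ecr}).

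First I would write the deterministic bound following directly from the case analysis of the algorithm:
\begin{equation*}
|I_k| \leq k + \sum_{i=1}^{k} |\mathcal{N}(v_i,i)|,
\end{equation*}
where the extra $k$ accounts at worst for adding one new root vertex at every step (Case II of Section \ref{SectionBFW}). Next, I would observe that, conditionally on $\mathcal{F}_{i-1}=\sigma\{I_l,v_l:l\leq i-1\}$ together with the choice of $v_i$, the random variable $|\mathcal{N}(v_i,i)|=\sum_{u\in U_{i-1}}\mathbf{1}\{u\sim v_i\}$ is a sum of independent Bernoullis with parameters $p(v_i,u)$. Hence, using $1+x\leq e^x$,
\begin{equation*}
\mathbb{E}\bigl(e^{t|\mathcal{N}(v_i,i)|}\bigm|\mathcal{F}_{i-1},v_i\bigr)
\leq \exp\Bigl((e^t-1)\sum_{u\in U_{i-1}}p(v_i,u)\Bigr)
\leq \exp\Bigl((e^t-1)\mathbb{E}|\mathcal{N}(v_i)|\Bigr).
\end{equation*}

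By (\ref{Ecr}), $\mathbb{E}|\mathcal{N}(v)|\leq 1$ for all $N$ sufficiently large, and this bound is uniform in $v$ by symmetry. Iterating the conditional estimate gives
\begin{equation*}
\mathbb{E}\exp\Bigl(t\sum_{i=1}^{k}|\mathcal{N}(v_i,i)|\Bigr)\leq e^{k(e^t-1)}.
\end{equation*}
Now I would apply Markov's inequality with the choice $t=1$, together with the deterministic bound above, to conclude
\begin{equation*}
\mathbb{P}(|I_k|\geq kC)\leq \mathbb{P}\Bigl(\sum_{i=1}^{k}|\mathcal{N}(v_i,i)|\geq k(C-1)\Bigr)\leq e^{-k(C-1)}\,e^{k(e-1)}=e^{-k(C-e)},
\end{equation*}
which is the desired inequality.

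There is no serious obstacle; the only point requiring minor care is the conditioning structure. One must condition simultaneously on the past filtration $\mathcal{F}_{i-1}$ \emph{and} on the choice of $v_i$ (since $v_i$ is chosen from already revealed vertices in Case I), so that $U_{i-1}$ is deterministic and the Bernoullis $\mathbf{1}\{u\sim v_i\}$, $u\in U_{i-1}$, are genuinely independent and unexplored. With that in place the argument is standard.
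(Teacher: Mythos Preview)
Your proof is correct and follows essentially the same route as the paper: bound $|I_k|\le k+\sum_{i\le k}|\mathcal N(v_i,i)|$, then apply an exponential Markov inequality at $t=1$ using that the (conditional) moment generating function of each increment is at most $e^{e-1}$. The only cosmetic difference is that the paper first dominates each increment by a Poisson$(1)$ variable and then works with i.i.d.\ Poisson copies, whereas you compute the conditional Bernoulli MGF directly; your version is in fact slightly cleaner, since it makes the iterative conditioning explicit and avoids having to justify a joint stochastic domination.
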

        \begin{Rem}\label{TR}
        	$C$ in (\ref{LmEstI_k2_ineq}) is a constant, but can be chosen as a function of $N$, e.g., $C=N^{\varepsilon}.$
        \end{Rem}

\begin{proof}
  Observe that for all $k\geq 1$
	\begin{equation}\label{TM1}
		|I_k| \leq \sum\limits_{i = 1}^{k} |\mathcal{N} (v_i, i)| + k \leq \sum\limits_{i = 1}^{k} |\mathcal{N} (v_i)| + k.
              \end{equation}

              Recall (see (\ref{Nvind}))  that  $|\mathcal{N} (v_i)|, i\geq 1,$ are identically distributed random variables. Let
	\begin{equation}\label{eta}
              \eta_{uv} \in Be\Big(p(u,v)\Big),  \ \ \ \ \ \ u,v \in V_N,
            \end{equation}
            be independent Bernoulli random variables $Be(p(u,v))$, where $p(u,v)$ is the probability of the edge  $(u,v)$ in $G_N^c$. In other words, $\eta_{uv}$ can be regarded as an indicator of the edge $(u,v)$ in $G_N^c$.
              Then
              \[\mathcal{N} (v_i) = \sum_{v\neq v_i}\eta_{v_i v},\]
              and it is standard to show (see the details, e.g., in \cite{ANT}) that $|\mathcal{N} (v_i) |$ is stochastically dominated by
Poisson distributed random variable with mean value 
\begin{equation}\label{meanE}
\sum\limits_{r = 1}^{N} N_r | \log (1 - p_r)|\leq 1
 \end{equation}
(recall (\ref{Ecr})) for all large $N$. 
Hence, $|\mathcal{N} (v_i)|$ is stochastically dominated by Poisson distributed
variable with mean 1. 

Introducing now  independent copies  $\xi_1, \xi_2, ..., \xi_k$ of a Poisson random variable with mean 1, and making use of 
 (\ref{TM1}) we get
	\begin{eqnarray}
		&& \mathbb{P} \left ( |I_k| \geq k C \right ) \leq \mathbb{P} \left ( \sum\limits_{i = 1}^{k} |\mathcal{N} (v_i, i)| + k \geq k C \right ) \nonumber \\ \nonumber
		&& \leq \mathbb{P} \left ( \sum\limits_{i = 1}^{k} \xi_i + k \geq kC \right ) = \mathbb{P} \left ( \sum\limits_{i = 1}^{k} \xi_i \geq k \left ( C - 1\right ) \right ) \\ \nonumber
		&& \leq \dfrac{\mathbb{E} \left ( \exp \left (\sum\limits_{i = 1}^{k} \xi_i \right ) \right )}{\exp \left (k \left ( C - 1 \right ) \right )} = \dfrac{\exp (k \left ( e - 1 \right )) }{\exp \left (k \left ( C - 1 \right ) \right )} = e^{-k \left ( C - e \right )}.
	\end{eqnarray}
\end{proof}

We shall mostly use the following immediate corollary of the last lemma.

\begin{Cor}\label{TC1}
Let  $T>0$  be fixed arbitrarily. Then for any $C>2e T$  one has
\begin{equation*}\label{Ik}
\mathbb{P} \left ( |I_k| \geq |V_N|^{2/3} C\right ) \leq e^{-|V_N|^{2/3} C/2
	},
\end{equation*}
uniformly in 
$1 \leq k \leq T|V_N|^{2/3} $
for all large $N$.\hfill$\Box$
\end{Cor}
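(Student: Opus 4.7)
The plan is to reduce the statement to Lemma \ref{LmEstI_k2} by rewriting the threshold $|V_N|^{2/3} C$ in the form $k \cdot C_k$ for a $k$-dependent constant $C_k$, and verifying that the hypothesis $C_k > e$ of the lemma is satisfied uniformly in the allowed range of $k$. Remark \ref{TR} is crucial here: it explicitly authorizes the constant $C$ in Lemma \ref{LmEstI_k2} to depend on $N$ (and hence on $k$), which is exactly the flexibility I need.

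Concretely, for each $1 \leq k \leq T|V_N|^{2/3}$ I would set $C_k := |V_N|^{2/3} C / k$, so that the event $\{|I_k| \geq |V_N|^{2/3} C\}$ coincides with $\{|I_k| \geq k C_k\}$. Since $k \leq T|V_N|^{2/3}$ and $C > 2eT$, I get $C_k \geq C/T > 2e$, in particular $C_k > e$, so Lemma \ref{LmEstI_k2} applies and gives
$$\mathbb{P}\bigl(|I_k| \geq |V_N|^{2/3}C\bigr) \leq e^{-k(C_k - e)} = \exp\bigl(-(|V_N|^{2/3} C - ke)\bigr).$$

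To conclude, I would simplify the exponent using $ke \leq eT|V_N|^{2/3}$ together with the standing assumption $C > 2eT$, i.e.\ $eT < C/2$:
$$|V_N|^{2/3} C - ke \;\geq\; |V_N|^{2/3}(C - eT) \;\geq\; |V_N|^{2/3}\cdot C/2,$$
which delivers the claimed bound uniformly in $1 \leq k \leq T|V_N|^{2/3}$.

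There is essentially no obstacle, but the one subtle point worth flagging is the uniformity: the argument must work simultaneously for all $k$ in the allowed range, and it is precisely the choice of margin $C > 2eT$ (rather than the weaker $C > eT$ one might naively guess) that keeps $C_k$ bounded away from $e$ at the upper end $k \approx T|V_N|^{2/3}$ and secures the clean factor $C/2$ in the final exponent.
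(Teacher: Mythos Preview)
Your proposal is correct and is precisely the argument the paper has in mind: the corollary is stated there without proof (just a $\Box$), as an immediate consequence of Lemma~\ref{LmEstI_k2} with the $k$-dependent choice $C_k = |V_N|^{2/3}C/k$ sanctioned by Remark~\ref{TR}. Your bookkeeping of the exponent and the explanation of why the hypothesis $C > 2eT$ (rather than merely $C > eT$) is needed are both exactly right.
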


Note that  the uniform distribution of $v_1$ on $V_N$, yields as well the uniform distribution  of $v_i$, for any fixed $i$. Hence, we  derive immediately from
Corollary \ref{TC1}
a conditional version as follows.

\begin{Cor}\label{TCo2}
  Under assumption of Corollary \ref{TC1}
\begin{equation*}\label{Ik2}
\mathbb{P} \left ( |I_k| \geq |V_N|^{2/3} C \Big| v_i=a \right ) \leq e^{-|V_N|^{2/3} C/2
	},
\end{equation*}
uniformly in $a\in V_N$ and
$1 \leq i \leq k \leq T|V_N|^{2/3} $
for all large $N$. \hfill$\Box$
\end{Cor}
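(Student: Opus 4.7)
The plan is to deduce Corollary \ref{TCo2} from Corollary \ref{TC1} by exploiting the translation invariance of the model, as hinted at in the remark immediately preceding the statement. The key observation is that the connection probability $p(u,v)$ in (\ref{puv}) depends only on the torus distance $\rho(u,v)$, which is itself invariant under translations of $\mathbb{T}_N^2$. Hence, for every shift $\tau \in \mathbb{T}_N^2$, the map $u \mapsto u + \tau$ is a distributional automorphism of $G_N^c$.

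Next I would argue that this invariance is inherited by the entire breadth first walk. The initial vertex $v_1$ is drawn uniformly on $V_N$ independently of the edges, and the subsequent selection rules in Case I and Case II of Section \ref{SectionBFW} use only the internal tree structure (minimum graph distance from the root, uniform choice among candidates) without referring to absolute torus coordinates. Consequently the joint law of $(v_1, v_2, \ldots, v_k, I_k)$ is invariant under the simultaneous translation $v_j \mapsto v_j + \tau$, $I_k \mapsto I_k + \tau$. Since the cardinality $|I_k|$ is unaffected by such a relabeling, the conditional law $\mathcal{L}(|I_k| \mid v_i = a)$ is independent of $a \in V_N$: given any two vertices $a, a' \in V_N$, taking $\tau = a' - a$ identifies the two conditional laws.

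Combined with the uniformity of the marginal distribution of $v_i$ noted in the remark, the tower identity
\[
\mathbb{P}(|I_k| \geq x) = \frac{1}{|V_N|} \sum_{a \in V_N} \mathbb{P}(|I_k| \geq x \mid v_i = a)
\]
forces $\mathbb{P}(|I_k| \geq x \mid v_i = a) = \mathbb{P}(|I_k| \geq x)$ for every $a$. Setting $x = |V_N|^{2/3} C$ and applying Corollary \ref{TC1} gives the bound $e^{-|V_N|^{2/3} C/2}$ uniformly in $a \in V_N$ and in $1 \leq i \leq k \leq T|V_N|^{2/3}$. There is no genuine analytic obstacle here; the only point that deserves care is verifying that the BFW selection rule is truly coordinate free, which is evident from the construction since at every step the choice among candidate vertices is uniform and the ordering of generations depends only on intrinsic tree distances.
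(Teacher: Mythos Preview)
Your argument is correct and follows exactly the symmetry reasoning that the paper invokes: the paper simply remarks that the uniform distribution of $v_1$ forces $v_i$ to be uniform for every $i$ and then states the corollary with no further proof, leaving the translation-invariance step implicit. You have spelled out precisely the missing detail (that the conditional law of $|I_k|$ given $v_i=a$ is independent of $a$ because the BFW selection rule is coordinate free), so there is nothing to add.
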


Return now to Theorem \ref{ThIndep}. With a help of Corollary \ref{TCo2} we get first
\begin{equation}\label{TM28nn}
\mathbb{P} \left \{ v_j = b \big| v_i = a \right \} 
\end{equation}
\[
=\mathbb{P} 
\left\{ v_j = b \big| v_i = a,  |I_k| \leq |V_N|^{2/3} C
\right\} 
+ O\left( e^{-|V_N|^{2/3}} \right), \]
where $C>2e T$ is fixed arbitrarily.
Then we split probability in (\ref{TM28nn})
distinguishing in particular, case when $v_j $ and $v_i$ belong to different components of $T_k$, (i.e.,  $|v_i,v_j|_{T_k}=\infty$, in terms defined in Section \ref{TT})
as follows
\begin{equation}\label{TM28n}
\mathbb{P} \left \{ v_j = b \big| v_i = a \right \} 
\end{equation}	
\[=\mathbb{P} \left \{ v_j = b \big| v_i = a, |v_i,v_j|_{T_k}=\infty ,
 |I_k| \leq |V_N|^{2/3} C\right \} 	\]
 \[\times
 \mathbb{P} \left \{  |v_i,v_j|_{T_k}=\infty \big| v_i = a,
 |I_k| \leq |V_N|^{2/3} C\right \}\]
 \[+ \sum_{L\geq 1}\mathbb{P} \left \{ v_j = b \big| v_i = a, |v_i,v_j|_{T_k}=L ,
 |I_k| \leq |V_N|^{2/3} C\right \} 	\]
 \[\times
 \mathbb{P} \left \{  |v_i,v_j|_{T_k}=L \big| v_i = a,
 |I_k| \leq |V_N|^{2/3} C\right \}
 + O\left( e^{-|V_N|^{2/3}} \right).
 	 \]
The proof of Theorem \ref{ThIndep} will be based 
on this decomposition and the following three lemmas, which study separately terms on the right in (\ref{TM28n}).

 \begin{Lemma}\label{LT1}
 	Let $k = |V_N|^{2/3} s, \; s \in [0, T]$. Then for all $a \neq b \in V_N$ and for all $1 \leq i< j \leq k$
\begin{equation}\label{TM29}
  \mathbb{P} \left \{ v_j = b \big| v_i = a,    |v_i,v_j|_{T_k}=\infty ,
 |I_k| \leq |V_N|^{2/3} C
  \right \} = \dfrac{1}{N^2} + o\left (\dfrac{1}{N^2} \right )
\end{equation}
uniformly in $k\leq T |V_N|^{2/3}$.
\end{Lemma}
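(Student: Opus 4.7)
Plan: The strategy is to combine the torus translation-invariance of $G_N^c$ with the fact that every tree in the breadth-first walk starts at a root selected uniformly from the unused vertices. Since $i<j$ and $v_i,v_j$ lie in distinct components of $T_k$, the tree containing $v_j$ must be initiated at some step $l\in\{i+1,\ldots,j\}$ with $\mathcal{A}_{l-1}=\emptyset$, and $R(v_j)=v_l$. Decomposing by $l$,
\[
\mathbb{P}(v_j=b\mid v_i=a,E)=\sum_{l=i+1}^{j}\mathbb{P}(v_j=b\mid v_i=a,R(v_j)=v_l,E)\,\mathbb{P}(R(v_j)=v_l\mid v_i=a,E),
\]
where $E$ abbreviates the conditioning event $\{|v_i,v_j|_{T_k}=\infty,\,|I_k|\le C|V_N|^{2/3}\}$. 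As the weights sum to one, it suffices to estimate each conditional probability uniformly in $l$.

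Fix $l$ and condition on the history $\mathcal{F}_{l-1}$ together with $\mathcal{A}_{l-1}=\emptyset$. By Case II(b) of the algorithm in Section \ref{SectionBFW}, $v_l$ is then uniform on $U_{l-1}$, and on $\{|I_k|\le C|V_N|^{2/3}\}$ one has $|U_{l-1}|=N^2(1-o(1))$. Given $v_l=r$, the subsequent walk produces $v_{l+1},\ldots,v_j$ by running BFS on $G_N^c$ restricted to $U_{l-1}$. The translation-invariance of the edge probabilities together with the equivariance of the uniform tie-breaking implies that the conditional law of $v_j-r$ depends on $r$ only via the shifted forbidden set $I_{l-1}-r$. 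Denoting by $\mu$ the ``unrestricted'' offset law obtained by running BFS from $r$ on the full graph $G_N^c$, which by translation-invariance does not depend on $r$, and averaging against the uniform law of $v_l$,
\[
\mathbb{P}(v_j=b\mid v_i=a,R(v_j)=v_l,\mathcal{F}_{l-1})=\frac{1}{|U_{l-1}|}\sum_{r\in U_{l-1}}\mu(b-r)+\text{coupling error},
\]
whose main term equals $|U_{l-1}|^{-1}\bigl(1-\sum_{r\in I_{l-1}}\mu(b-r)\bigr)=N^{-2}+o(N^{-2})$ as soon as $\sum_{r\in I_{l-1}}\mu(b-r)=o(1)$.

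The principal obstacle is twofold: the spreading estimate $\sum_{r\in I_{l-1}}\mu(b-r)=o(1)$ uniformly in $b$, and a matching $o(N^{-2})$ bound on the coupling error between the restricted and unrestricted BFS. Both rest on the mixing analysis of the breadth-first walk and the associated random walks on the torus to be developed in Sections \ref{SectionMixing}--\ref{SectionProof29}; these results should ensure that the offset law $\mu$ carries negligible mass on any prescribed set of size $|V_N|^{2/3}$, in particular on the translate $b-I_{l-1}$, and they should provide quantitative control on how much restricting to $U_{l-1}$ perturbs the BFS from $v_l$. Once these estimates are in hand, summing the uniform $1/N^2$ bound over $l$ produces the lemma.
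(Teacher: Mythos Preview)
Your plan follows essentially the same strategy as the paper: identify the root $v_l=R(v_j)$ of the tree containing $v_j$, use that $v_l$ is uniform on $U_{l-1}$, and deduce approximate uniformity of $v_j$. The paper packages your translation-invariance-plus-coupling argument into the auxiliary walk $\tilde{X}_{\bf A}$ of Section~\ref{SectionRWT} and then invokes Corollary~\ref{LMarkov3}, specifically~(\ref{EqUniformX_tilda}), which states directly that if the start of $\tilde{X}_{\bf A}$ is uniform on $V_N\setminus A_1$ then every later coordinate has law $N^{-2}(1+o(1))$; this single statement absorbs both your ``spreading estimate'' and your ``coupling error''. In other words, rather than comparing the restricted BFS to an unrestricted one and controlling the discrepancy, the paper bounds the restricted law itself from above and below by $\tilde{X}_{\bf A}$-probabilities (inequality~(\ref{LT1eq3})) and shows both sides converge to $1/N^2$.

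One step you omit but the paper makes explicit: before invoking any random-walk bound one conditions on $\{b\notin I_{\hat\jmath-1}\}$, where $v_{\hat\jmath}$ is the parent of $v_j$, via Lemma~\ref{Cor_b_not_in_I}. Without this, your offset-law computation can fail: when $l=j$ (so $v_j$ is itself a root) your $\mu=\delta_0$, and then $\sum_{r\in I_{l-1}}\mu(b-r)=\mathbf{1}\{b\in I_{l-1}\}$ is not $o(1)$ unless you have already ruled out $b\in I_{l-1}$. More generally, the pointwise coupling error between restricted and unrestricted BFS is not $o(N^{-2})$ for $r$ near $b$ when $b$ or its close neighbours lie in $I_{l-1}$; it is only after this preliminary conditioning (which costs a factor $1+O(N^{-1/3})$) that the uniform-start argument goes through cleanly.
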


Next lemma establishes a similar to (\ref{TM29}) asymptotic but under assumptions that $v_j $ and $v_i$ belong to the same component of $T_k$, and moreover that the distance between them is large.
\begin{Lemma}\label{LT2}
  Let $T>0$ be fixed arbitrarily. There is a positive $C_1$ such that for 
  any $$C_1\log N<L <k\leq T|V_N|^{2/3}, $$ 
  and for all $a \neq b$ in $V_N$ one has 
  \begin{equation}\label{TM30*}
     \mathbb{P} \left \{ v_j = b \big| v_i = a,    |v_i,v_j|_{T_k}=L,
         |I_k| \leq |V_N|^{2/3} C \right \}
\end{equation}
       \[ \leq \max \left ( \dfrac{4 c^2 \log N}{N^2}, p(a, b) \right ) \left ( 1 + o(1) \right )
	\]
	uniformly in $C_1\log N<k\leq T |V_N|^{2/3}$ and $1 \leq i< j \leq k$.
	
        If also
\begin{equation}\label{cMk}
        j-i \geq |V_N|^{1/6},
\end{equation}
        then 
          \begin{equation}\label{TM30}
         \mathbb{P} \left \{ v_j = b \big| v_i = a,    |v_i,v_j|_{T_k}=L,
         |I_k| \leq |V_N|^{2/3} C \right \} = \dfrac{1}{N^2} + o\left (\dfrac{1}{N^2} \right )
	\end{equation}
	uniformly in $1+|V_N|^{1/6} \leq i+|V_N|^{1/6} \leq j \leq k$ and all $a \neq b$ in $V_N$
\end{Lemma}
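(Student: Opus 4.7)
The plan is to decompose the conditional probability along the unique tree path. Under the conditioning $|v_i,v_j|_{T_k}=L$, there is a path $v_i=u_0,u_1,\ldots,u_L=v_j$ in $T_k$ through the common ancestor, and each consecutive pair $(u_m,u_{m+1})$ is an edge of $G_N^c$. Using the event $\{|I_k|\leq|V_N|^{2/3}C\}$ together with (\ref{L1ANT}) and (\ref{Ecr}), the normalising mass of available vertices at each step equals $1-o(1)$ uniformly, so the transition kernel along the path is $\tilde p(u,v):=p(u,v)/Z$, with $Z=1-2c/N+O(N^{-2})$, up to multiplicative errors $1+o(1)$. Writing $\tilde f(w)=\tilde p(0,w)$ for the translation-invariant step kernel on $\mathbb{T}_N^2$, one arrives at
\[
\mathbb{P}\{v_j=b\mid v_i=a,\,|v_i,v_j|_{T_k}=L,\,|I_k|\leq|V_N|^{2/3}C\}=(1+o(1))\,\tilde f^{\ast L}(b-a),
\]
with $\tilde f^{\ast L}$ the $L$-fold convolution on the torus.

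For (\ref{TM30*}) I would bound the convolution iteratively. For $L=1$ the right side is $\tilde f(b-a)=p(a,b)(1+o(1))$. For $L=2$, Cauchy--Schwarz together with Proposition \ref{Prop1} gives
\[
(\tilde f\ast\tilde f)(w)\leq\sum_c\tilde f(c)^2=\frac{4c^2\log N}{N^2}+O\!\left(\frac{1}{N^2}\right).
\]
Further convolutions cannot increase the sup norm, since $\tilde f$ is a probability distribution: $\|\tilde f\ast\mu\|_\infty\leq\|\mu\|_\infty$. Taking the maximum of the $L=1$ and $L\geq 2$ bounds yields (\ref{TM30*}).

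For the sharp asymptotic (\ref{TM30}) I would use Fourier analysis on $\mathbb{T}_N^2$. Expanding
\[
\tilde f^{\ast L}(w)=\frac{1}{N^2}\sum_{\xi\in\mathbb{T}_N^2}\hat{\tilde f}(\xi)^L\,e^{2\pi i\xi\cdot w/N},
\]
the $\xi=0$ mode contributes exactly $1/N^2$. For $\xi\neq 0$, a direct analysis of the characteristic function, exploiting the structure $f(w)\asymp c/(N\rho(0,w))$, shows that $|\hat{\tilde f}(\xi)|$ is bounded away from $1$ by at least $\kappa/\log N$ uniformly in $\xi\neq 0$ for some $\kappa>0$. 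Choosing $C_1$ large enough that $L>C_1\log N$ forces $\sum_{\xi\neq 0}|\hat{\tilde f}(\xi)|^L=o(1)$, so the non-zero modes contribute $o(1/N^2)$ and $\tilde f^{\ast L}(w)=1/N^2+o(1/N^2)$ uniformly in $w$.

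The main obstacle will be reconciling the tree-distance parameter $L$ with the BFS-order parameter $j-i$. The extra assumption $j-i\geq|V_N|^{1/6}$ is intended to ensure that the BFS history between exploration steps $i$ and $j$ provides enough additional randomisation to upgrade the convolution formula from the $\log N/N^2$ scale of (\ref{TM30*}) to the sharp $1/N^2$ scale; without it, the joint law of $(v_i,v_j)$ under BFS order could retain biases visible at the $1/N^2$ level. I would handle this by coupling the actual BFS exploration between steps $i$ and $j$ with an idealised version whose intermediate vertices are sampled from fresh edges at each step, controlling the coupling error through Corollary \ref{TC1} and (\ref{L1ANT}) uniformly over the $\geq|V_N|^{1/6}$ intermediate exploration steps. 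Once this coupling is in place, the Fourier estimate yields (\ref{TM30}).
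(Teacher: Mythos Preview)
Your overall strategy matches the paper's: reduce the conditional law of $v_j$ given $v_i=a$ and tree-distance $L$ to a random walk on the torus with step kernel $p(\cdot,\cdot)/Z$, then invoke mixing. Your argument for (\ref{TM30*}) via $\|\tilde f\ast\mu\|_\infty\le\|\mu\|_\infty$ and Cauchy--Schwarz on $\tilde f\ast\tilde f$ is exactly the content of Proposition~\ref{Lm_ineq_Markov_chain} and Lemma~\ref{LEp}.

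There are, however, two concrete gaps in your argument for (\ref{TM30}).

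\emph{The spectral gap you quote is quantitatively too weak.} With $1-|\hat{\tilde f}(\xi)|\ge\kappa/\log N$ and $L=C_1\log N$ you only get $|\hat{\tilde f}(\xi)|^L\le e^{-\kappa C_1}$, hence $\sum_{\xi\neq0}|\hat{\tilde f}(\xi)|^L\le N^2 e^{-\kappa C_1}$, which diverges for every fixed $C_1$. You need a \emph{constant} gap. This is actually available: since $\tilde f(w)\ge c/N^2$ for all $w\neq0$, one has $P^2(u,v)\ge (c/2)\pi(v)$ uniformly, and this Doeblin minorization forces $|\hat{\tilde f}(\xi)|^2\le 1-c/2$ for all $\xi\neq0$. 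The paper uses precisely this minorization (Lemma~\ref{LmLevin_Peres}) rather than Fourier, getting geometric total-variation decay at a constant rate; either route works once the gap is taken to be of order one.

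\emph{The reduction to a clean convolution is not justified.} Two separate issues are hidden in your $(1+o(1))\tilde f^{\ast L}(b-a)$. First, the tree path is self-avoiding and avoids the explored set, so the $m$-th transition is $P_{A_m}$ with $|A_m|=O(|V_N|^{2/3})$, not $P$; controlling the cumulative perturbation over up to $|V_N|^{2/3}$ steps is not automatic from (\ref{L1ANT}) alone and is the substance of Lemmas~\ref{LMarkov1}--\ref{LMarkov2}. Second, when $v_i$ is not an ancestor of $v_j$ the path runs \emph{upward} from $v_i$ to the common ancestor $v_r$ and then downward to $v_j$, and the upward steps (child $\to$ parent) are not governed by the forward kernel under this conditioning. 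The paper avoids analysing the upward leg altogether: since $v_j$ lies in a later BFS generation than $v_i$, one has $\|(v_r\to v_j)_{T_k}\|\ge L/2>C_0\log N$; the paper then conditions on $v_r=u$, applies the mixing estimate only to the downward walk of length $\ge L/2$ started at $u$, and uses that the resulting bounds are uniform in $u$. This trick is what makes the common-ancestor case go through without any control on the law of $v_r$ or on the backward transitions.
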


Finally, for the case when $v_j $ and $v_i$ belong to the same component of $T_k$ and moreover at short distance from each other, we get the following bound.

\begin{Lemma}\label{LT3}
Let $T>0$ and $C_1>0$ be fixed arbitrarily. 
Then for all 
	$$L\leq C_1\log N, $$
any	$i,j$ such that $$
1+|V_N|^{1/6} \leq i+|V_N|^{1/6} \leq j \leq k\leq |V_N|^{2/3},$$ $($i.e., $j-i \geq |V_N|^{1/6})$ and $a \in V_N,$ one has
\begin{equation*}\label{TJ32}
\mathbb{P} \left \{|v_i,v_j|_{T_k}=L
\big| v_i = a,    
|I_k| \leq C|V_N|^{2/3} 
\right \} \leq 2 {C_1} \dfrac{\log N}{  N^{1/3} }
\end{equation*}
uniformly for all large $N$.
\end{Lemma}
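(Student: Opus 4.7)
The plan is to bound $\mathbb{P}\bigl\{|v_i,v_j|_{T_k}=L\,\bigm|\,v_i=a,\,|I_k|\le C|V_N|^{2/3}\bigr\}$ by a first-moment (union-bound) argument, viewing the event as the existence of a length-$L$ path in $G_N^c$ between $v_i$ and $v_j$. The key observation is that if $|v_i,v_j|_{T_k}=L$, then there is a path $(v_i=u_0,u_1,\ldots,u_L=v_j)$ in $T_k\subseteq G_N^c$, and all $L$ vertices $u_1,\ldots,u_L$ lie in $I_k$: the $L-1$ intermediate ones by the construction of $T_k$, and the endpoint $v_j=u_L$ because $j\le k$. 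Including $v_j$ in the $I_k$-constraint is what makes the bound nontrivial already at $L=1$; without it, the sum $\sum_{b}p(a,b)$ of length-$1$ path weights would be of order $1$ and useless.

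Applying the union bound over such paths gives
\[
\mathbb{P}\bigl\{|v_i,v_j|_{T_k}=L\,\bigm|\,v_i=a,\,|I_k|\le C|V_N|^{2/3}\bigr\}
\le \mathbb{E}\Bigl[\sum_{u_1,\ldots,u_L\in I_k}\prod_{m=1}^{L}p(u_{m-1},u_m)\,\Bigm|\,v_i=a,\,|I_k|\le C|V_N|^{2/3}\Bigr],
\]
with $u_0=a$. On the event $|I_k|\le C|V_N|^{2/3}=CN^{4/3}$, inequality (\ref{Pr1a1}) of Proposition \ref{Prop1} yields, for every vertex $v$, $\sum_{u\in I_k}p(v,u)\le 4c\sqrt{|I_k|}/N\le 4c\sqrt{C}/N^{1/3}$. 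Nesting this estimate from the innermost sum outward,
\[
\sum_{u_1,\ldots,u_L\in I_k}\prod_{m=1}^{L}p(u_{m-1},u_m)\le\Bigl(\frac{4c\sqrt{C}}{N^{1/3}}\Bigr)^{L}.
\]
For $N$ large enough that $4c\sqrt{C}/N^{1/3}<1$, the right-hand side is decreasing in $L$, so for every $L\ge 1$ it is bounded by $4c\sqrt{C}/N^{1/3}\le 2C_1\log N/N^{1/3}$ for all sufficiently large $N$, giving the claimed bound. Note that the hypothesis $L\le C_1\log N$ is not needed for the $N^{-1/3}$-scale of the estimate, only for the placement of the $\log N$ factor on the right.

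The main obstacle is the rigorous treatment of the conditioning on $\{v_i=a\}$, since this event depends on the realization of $G_N^c$ and hence is not independent of the path edges. The cleanest route around this is through the torus symmetry of the model: the joint law of the breadth-first walk is invariant under translations of $\mathbb{T}_N^2$, so the conditional probability does not depend on $a$ and can be related to the unconditional one after factoring out $\mathbb{P}(v_i=a)=|V_N|^{-1}$. To make the first-moment computation rigorous one couples with a sprinkling/revelation scheme: at each BFS step only the edges incident to the newly explored vertex are examined, while the edges between still-unexplored vertices remain independent Bernoullis with probabilities $p(\cdot,\cdot)$. This preserves just enough edge independence to apply (\ref{Pr1a1}) after conditioning on $I_k$, while Corollary \ref{TCo2} ensures that the extra conditioning $|I_k|\le C|V_N|^{2/3}$ produces only a $1+o(1)$ correction.
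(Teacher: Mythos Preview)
Your first displayed inequality is the crux, and the argument for it breaks already at $L=1$. What a union bound actually delivers is
\[
\mathbb{P}\{|v_i,v_j|_{T_k}=L\mid\cdot\}\le\mathbb{E}\Bigl[\,\bigl|\{(u_1,\dots,u_L)\in I_k^L:\ a\sim u_1,\ u_1\sim u_2,\dots,u_{L-1}\sim u_L\}\bigr|\ \Bigm|\ \cdot\,\Bigr],
\]
and you then silently replace each edge indicator by its probability $p(u_{m-1},u_m)$. That replacement is unjustified because membership in the random set $I_k$ is strongly positively correlated with the edges emanating from $a$ under your conditioning. Concretely, on $\{v_i=a\}$ one has $\mathcal{N}(a)\subseteq I_i\subseteq I_k$ (every neighbour of $a$ is discovered by step~$i$), so for $L=1$ the honest expectation above equals $\mathbb{E}\bigl[|\mathcal{N}(a)|\,\bigm|\,v_i=a,\,|I_k|\le CN^{4/3}\bigr]$, which by translation invariance is $\mathbb{E}\bigl[|\mathcal{N}(v_i)|\,\bigm|\,|I_k|\le CN^{4/3}\bigr]\ge\mathbb{E}\bigl[|\mathcal{N}(v_i,i)|\,\bigm|\,\cdot\bigr]=1+o(1)$ (Lemma~\ref{Lm_Est_ENvjj} and Corollary~\ref{TC1}); yet your claimed right-hand side $\sum_{u\in I_k}p(a,u)$ is at most $4c\sqrt{C}\,N^{-1/3}$. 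Neither proposed fix helps: translation invariance only makes the answer independent of $a$, and the revelation scheme keeps independent precisely the edges between \emph{unexplored} vertices, whereas every edge of the tree path from $v_i$ to $v_j$ has already been revealed. A tell-tale symptom is that your argument never touches the hypothesis $j-i\ge|V_N|^{1/6}$, without which the conclusion is plainly false (for $i=1$, $j=2$, $L=1$ one has $\mathbb{P}(|v_1,v_2|_{T_k}=1)\approx 1-e^{-1}$).

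The paper's proof is of an entirely different nature. By the breadth-first order, the $j-i+1$ vertices $v_i,v_{i+1},\dots,v_j$ occupy at most $L+1$ consecutive generations of the exploration tree, so some generation has at least $(j-i+1)/(L+1)$ vertices. Dominating the tree by a critical Galton--Watson process and invoking Lindvall's tail bound $n\,\mathbb{P}(\max_k\zeta_k>n)\to 1$ for the maximal generation size yields the estimate $2C_1\log N/N^{1/3}$; both hypotheses $j-i\ge|V_N|^{1/6}$ and $L\le C_1\log N$ are used essentially.
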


Proofs of the last two lemmas explore a natural relation of the breadth-first walk to a random walk on a random graphs. 
Therefore we devote next sections to the study of random walks associated with the breadth-first walk. 

\subsection{Basic properties of the breadth-first walk. }\label{SectionBasicBFW}

\begin{Lemma}\label{Lm_Est_ENvjjFjz}
  Let $T>0$  and  $s \in [0, T]$ be fixed arbitrarily.
Then uniformly in 
   $1 \leq  j \leq k = n^{2/3} s = N^{4/3} s$
   \begin{equation}\label{tm10}
   1 - \dfrac{4 c \sqrt{|I_{j-1}|}}{N} + O\left(\frac{1}{N} \right) \leq \mathbb{E} \left ( |\mathcal{N} (v_j, j)| \big| \mathcal{F}_j^z \right )  \leq 1 + O\left(\frac{1}{N} \right),
\end{equation}
   and
   \begin{equation}\label{tm11}
     2  - 4 \dfrac{4 c \sqrt{|I_{j-1}|}}{N} 
     +O\left(\frac{1}{N} \right)
     \leq \mathbb{E} \left ( |\mathcal{N} (v_j, j)|^2 \big| \mathcal{F}_j^z \right ) \leq 2 + O\left(\frac{1}{N} \right)
     \end{equation}
   as $N \rightarrow \infty.$

	\begin{proof}
	
                Consider with a help of  random variables introduced in (\ref{eta})
	\[
                  \mathbb{E}
                  \left\{|\mathcal{N} (v_j, j)|
                    \big| \mathcal{F}_j^z
                  \right \}     =
                     \mathbb{E}
                  \left\{\sum\limits_{u \in U_{j-1}} \eta_{v_j u}
                    \big| \mathcal{F}_j^z
                  \right \}
\]
		\begin{equation*}
                  =   \mathbb{E} \left\{
                    \left ( \sum\limits_{u \in V_N } \eta_{v_j u}
-\sum\limits_{u \in I_{j-1}} \eta_{v_j u}
                    \right )\big| \mathcal{F}_j^z
                  \right \}
                \end{equation*}
\[    =    \sum\limits_{u \in V_N } p(v, u)
-\mathbb{E} \left\{\sum\limits_{u \in I_{j-1}} p(v_j, u)
\big| \mathcal{F}_j^z
\right \},
\]
      where $v\in V_N$ is an arbitrarily  fixed vertex.
      Making use of  Lemma \ref{Lm_expectation_n_vertices} and bound (\ref{Pr1a1}) we derive from here
		\begin{equation*}\label{tm4}
       1 + O\left(\frac{1}{N} \right) \geq            \mathbb{E} \left ( |\mathcal{N} (v_j, j)| \big| \mathcal{F}_j^z \right ) =  1 + O\left(\frac{1}{N} \right) - \mathbb{E} \left ( \sum\limits_{u \in I_{j-1}} p(v_j, u) \big| \mathcal{F}_j^z \right )
     \end{equation*}             
		\[  \geq 1 + O\left(\frac{1}{N} \right) - \dfrac{4 c \sqrt{|I_{j-1}|}}{N},\]
		which yields (\ref{tm10}).
                     
 In the same manner  consider now
 \[
 \mathbb{E} \left\{
 |\mathcal{N} (v_j, j)|^2 \big| \mathcal{F}_j^z \right \}
 \]
		\begin{equation*}\label{tm5}
                   = \mathbb{E} \left \{ \left ( \sum\limits_{u \in U_{j-1}} p(v_j, u) + \sum\limits_{u,w \in U_{j-1}: \ u\neq w} p(v_j, u) p(v_j, w) \right)
                     \big| \mathcal{F}_j^z \right \}
          \end{equation*}
          \[          
                   =
                     \mathbb{E} \left \{ \left (
\sum\limits_{u \in V_{N}} p(v_j, u)-
                     \sum\limits_{u \in I_{j-1}} p(v_j, u)
                     \right)
                     \big| \mathcal{F}_j^z \right \}\]
                     \[
                  +
                     \sum\limits_{u,w \in V_N} \mathbb{E} \left \{ p(v_j, u) p(v_j, w) 
                     \big| \mathcal{F}_j^z \right \}
-
                     \sum\limits_{u \in V_N} \mathbb{E} \left \{ p^2(v_j, u) 
                     \big| \mathcal{F}_j^z \right \}\]
                     \[
-2\mathbb{E} \left \{ \sum\limits_{u \in V_N,w \in I_{j-1}: \ u\neq w} p(v_j, u) p(v_j, w)
                     \big| \mathcal{F}_j^z \right \}.
                \]
                Using the translation invariance of the probabilities of edges 
                we can rewrite the last formula as follows
                \[
                \mathbb{E} \left\{
                |\mathcal{N} (v_j, j)|^2 \big| \mathcal{F}_j^z \right \}
                \]
                \begin{equation} \label{tm6}        
               = \sum\limits_{u \in V_{N}} p(v, u)
                - \mathbb{E} \left\{
              \sum\limits_{u \in I_{j-1}} p(v_j, u)
                \big| \mathcal{F}_j^z \right \}
                \end{equation}
                \[+
                \left( 
                \sum\limits_{u \in V_N}   p(v, u) \right) ^2
                -
                \sum\limits_{u \in V_N}   p^2(v, u) 
                -2\mathbb{E} \left \{ \sum\limits_{u \in V_N,w \in I_{j-1}: \ u\neq w} p(v_j, u) p(v_j, w)
                \big| \mathcal{F}_j^z \right \},
                \]
                where $v\in V_N$ is fixed arbitrarily.
      Then with a help of {Lemma} \ref{Lm_expectation_n_vertices}  we immediately derive from here the uniform upper bound:
      \[\mathbb{E} \left\{
      |\mathcal{N} (v_j, j)|^2 \big| \mathcal{F}_j^z \right \}
      \]
      \begin{equation}\label{tm8}
      \leq \sum\limits_{u \in V_{N}} p(v, u)
      +\left( 
      \sum\limits_{u \in V_N}   p(v, u) \right) ^2=2+ O\left(\frac{1}{N}\right).
      \end{equation}
      On the other hand, taking also into account 
      bounds (\ref{Pr1a2}) and (\ref{Pr1a1}) we get from (\ref{tm6}) the lower bound as well:      
      \[
      \mathbb{E} \left\{
      |\mathcal{N} (v_j, j)|^2 \big| \mathcal{F}_j^z \right \}
      \]
      \begin{equation} \label{tm9}        
      \geq 2+ O\left(\frac{1}{N}\right)
      - \dfrac{4 c \sqrt{|I_{j-1}|}}{N}  \left(1+2\left(1+ O\left(\frac{1}{N}\right)\right)\right)-5c^2\frac{\log N}{N^2}.
      \end{equation}
      The last two bounds (\ref{tm9}) and (\ref{tm8}) confirm (\ref{tm11}) and finish the proof of the Lemma. 
	\end{proof}
\end{Lemma}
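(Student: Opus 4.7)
The plan is to write $|\mathcal{N}(v_j,j)|$ as a sum of edge indicators $\eta_{v_j u}$ restricted to the available set $U_{j-1}$, then replace the restriction by a difference between a sum over all of $V_N$ and a sum over the used set $I_{j-1}$. The first sum is translation-invariant and equals the expected degree computed in Lemma \ref{Lm_expectation_n_vertices}, giving the leading order $1+O(1/N)$. The correction coming from $I_{j-1}$ is bounded by the Cauchy--Schwarz-type estimate (\ref{Pr1a1}) in Proposition \ref{Prop1}, which gives $\sum_{u\in I_{j-1}} p(v_j,u)\leq 4c\sqrt{|I_{j-1}|}/N$. Since $v_j$ and $I_{j-1}$ are $\mathcal{F}_{j-1}$-measurable while $\eta_{v_j u}$ for $u\notin I_{j-1}$ are independent of $\mathcal{F}_{j-1}$, I can pass the conditional expectation through the indicators using the tower property. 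This yields the two-sided bound (\ref{tm10}).

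For the second moment, I will square the sum and separate diagonal from off-diagonal terms:
\[
|\mathcal{N}(v_j,j)|^2=\sum_{u\in U_{j-1}}\eta_{v_ju}+\sum_{\substack{u,w\in U_{j-1}\\u\neq w}}\eta_{v_ju}\eta_{v_jw},
\]
using $\eta_{v_ju}^2=\eta_{v_ju}$. Since edges are independent, the off-diagonal term conditional on $v_j$ factorises into a product $p(v_j,u)p(v_j,w)$. The idea is then to extend each of the two sums from $U_{j-1}$ to the full $V_N$ and subtract the pieces touching $I_{j-1}$. Extended to $V_N$, the diagonal sum gives $1+O(1/N)$ by Lemma \ref{Lm_expectation_n_vertices}, and the off-diagonal sum becomes $(\sum_u p(v,u))^2-\sum_u p^2(v,u)=1+O(1/N)-O(\log N/N^2)$ by (\ref{Pr1a2}). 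The subtracted pieces involving $I_{j-1}$ are controlled once more by (\ref{Pr1a1}): each such correction costs at most $O(\sqrt{|I_{j-1}|}/N)$, so their sum is of that same order. Together this gives the upper bound $2+O(1/N)$ and the matching lower bound $2-4\cdot 4c\sqrt{|I_{j-1}|}/N+O(1/N)$ asserted in (\ref{tm11}).

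The main obstacle is keeping the conditioning clean: the quantity $|I_{j-1}|$ appearing in the lower bound is not $\mathcal{F}_j^z$-measurable, so the bookkeeping requires conditioning first on the larger $\sigma$-algebra $\mathcal{F}_{j-1}$ (which determines $v_j$, $I_{j-1}$ and hence all the edge probabilities $p(v_j,u)$) and then averaging with respect to $\mathcal{F}_j^z$. Because Proposition \ref{Prop1} is deterministic in $|I_{j-1}|$, the bound survives the outer conditional expectation, and Jensen preserves the inequality. A second minor subtlety is that the extension from $U_{j-1}$ to $V_N$ in the off-diagonal sum double-counts the $u=w$ contribution; this is precisely why the single sum $\sum_u p^2(v,u)$ enters, and it is negligible thanks to (\ref{Pr1a2}). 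Once these two points are handled the required bounds follow directly from Lemma \ref{Lm_expectation_n_vertices} and Proposition \ref{Prop1}, with no further combinatorial input.
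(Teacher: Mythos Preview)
Your proposal is correct and follows essentially the same route as the paper: write $|\mathcal{N}(v_j,j)|$ as a sum of edge indicators over $U_{j-1}$, split $V_N\setminus I_{j-1}$, invoke Lemma~\ref{Lm_expectation_n_vertices} for the full sum and Proposition~\ref{Prop1} (specifically (\ref{Pr1a1}) and (\ref{Pr1a2})) for the corrections, and treat the second moment via the diagonal/off-diagonal split. One small remark: your worry that $|I_{j-1}|$ is not $\mathcal{F}_j^z$-measurable is unfounded, since $|I_{j-1}|=z(j)+(j-1)+|\mathcal{R}_{j-1}|$ and $|\mathcal{R}_{j-1}|$ is determined by the minima of $z$ up to time $j$; this does not affect your argument, which works either way.
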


As a corollary of the previous lemma we get the following result.
\begin{Lemma}\label{Lm_Est_ENvjj}
 Let $T>0$   be fixed arbitrarily.
Then uniformly in 
$1 \leq  j \leq k \leq T |V_N|^{2/3}=TN^{4/3} $
	\[
		\mathbb{E} |\mathcal{N} (v_j, j)|  = 1 + O\left({N^{-1/4}}\right),\]
		and
		\[ \mathbb{E} |\mathcal{N} (v_j, j)|^2 = 2 + O\left({N^{-1/4}}\right) .\]
	\begin{proof}          
          Note that by Lemma \ref{LmEstI_k2} for an arbitrarily fixed $\varepsilon$ and any  $j\leq Tn^{2/3}$, where $n=|V_N|=N^2$
          \begin{equation}\label{tm12}
          \mathbb{E}\sqrt{|I_{j-1}|} \leq \sqrt{|Tn^{2/3+\varepsilon}|} +
          \sqrt{|V_N|}e^{-(n^{\varepsilon}-e)}=O\left(
          n^{1/3+\varepsilon /2}
          \right)=O\left(
          N^{2/3+\varepsilon}
          \right),
          \end{equation}
          hence
          \[\frac{\mathbb{E}\sqrt{|I_{j-1}|} }{N}=N^{-1/3+\varepsilon}.\]
          Combining this bound with result of Lemma \ref{Lm_Est_ENvjjFjz} we get
          the statements of the Lemma.
                	\end{proof}
\end{Lemma}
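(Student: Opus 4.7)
The plan is to obtain the unconditional moments by averaging the conditional bounds of Lemma \ref{Lm_Est_ENvjjFjz} over the distribution of $|I_{j-1}|$. From that lemma we have the sandwich
\[
1 - \frac{4c\sqrt{|I_{j-1}|}}{N} + O\!\left(\frac{1}{N}\right) \leq \mathbb{E}\left(|\mathcal{N}(v_j,j)|\,\big|\,\mathcal{F}_j^z\right) \leq 1 + O\!\left(\frac{1}{N}\right),
\]
and an entirely analogous sandwich for the second conditional moment, so after taking the outer expectation the only nontrivial task is to control the quantity $\mathbb{E}\sqrt{|I_{j-1}|}/N$. The upper estimates are immediate since the constant-plus-$O(1/N)$ bounds carry over directly under expectation.

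To estimate $\mathbb{E}\sqrt{|I_{j-1}|}$ I would invoke Lemma \ref{LmEstI_k2} together with Remark \ref{TR}, taking $C = n^{\varepsilon}$ for a small $\varepsilon>0$ to be fixed later. This gives
\[
\mathbb{P}\!\left(|I_{j-1}| \geq (j-1)\, n^{\varepsilon}\right) \leq \exp\bigl(-(j-1)(n^{\varepsilon}-e)\bigr),
\]
which is super-polynomially small. Splitting the expectation on the event $\{|I_{j-1}| < (j-1)n^{\varepsilon}\}$ and its complement, and using the deterministic bound $|I_{j-1}| \leq |V_N| = n$ on the bad event, yields
\[
\mathbb{E}\sqrt{|I_{j-1}|} \leq \sqrt{(j-1)\,n^{\varepsilon}} \;+\; \sqrt{n}\,\exp\bigl(-(j-1)(n^{\varepsilon}-e)\bigr).
\]
With $j-1 \leq T n^{2/3}$ the first term dominates, so $\mathbb{E}\sqrt{|I_{j-1}|} = O\!\left(n^{1/3+\varepsilon/2}\right) = O\!\left(N^{2/3+\varepsilon}\right)$, and therefore $\mathbb{E}\sqrt{|I_{j-1}|}/N = O\!\left(N^{-1/3+\varepsilon}\right)$.

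Finally, choosing any fixed $\varepsilon < 1/12$ (for instance $\varepsilon = 1/12$ itself is enough to beat $N^{-1/4}$ with room to spare), the error term $\mathbb{E}\sqrt{|I_{j-1}|}/N$ is absorbed into $O(N^{-1/4})$, and combining with the $O(1/N)$ terms already present in Lemma \ref{Lm_Est_ENvjjFjz} proves both claims at once. There is no real obstacle here: the conditional bounds already concentrate around the desired constants $1$ and $2$, and Lemma \ref{LmEstI_k2} gives an exponential tail so sharp that the choice of $\varepsilon$ is the only tuning required. The exponent $1/4$ in the error term is not optimal, but it is more than enough for the subsequent martingale and drift analyses and matches the bookkeeping in the sketch formula (\ref{tm12}).
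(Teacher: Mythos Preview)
Your proposal is correct and follows essentially the same route as the paper: take expectations in the conditional sandwich from Lemma~\ref{Lm_Est_ENvjjFjz}, control $\mathbb{E}\sqrt{|I_{j-1}|}$ via the tail bound of Lemma~\ref{LmEstI_k2} with $C=n^{\varepsilon}$, and then pick $\varepsilon$ small enough (indeed $\varepsilon\le 1/12$) so that $N^{-1/3+\varepsilon}=O(N^{-1/4})$. This is exactly the content of the paper's formula~(\ref{tm12}); your write-up is if anything slightly more explicit about the good/bad event split.
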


\subsection{Markov chains associated with random graph.}\label{SectionMC}

First we study an auxiliary Markov chain 
 ${\bf X}=\{X_k\}_{k = 1}^{\infty}$  on 
the state space $V_N$ with  transition probability matrix defined by the probabilities of edges in $G_N^c$ as follows
$$P: = (P(u, v))_{u, v \in V_N}$$
defined by  
\begin{equation}\label{TranProbMatr}
P(u, v)=
\left\{
\begin{array}{ll}
\dfrac{p(u, v)}{Z_{N}}, &  u\neq v, \\
0, & u=v,
\end{array}
\right.
\end{equation}
with
\begin{equation}\label{ZN}
Z_N =
\sum \limits_{ v \in V_N : v \neq u }
p(u, v) =1-\frac{2c}{N}+O\left(\frac{1}{N}\right),
\end{equation}
where the last equality is by (\ref{Ecr}).

In a similar way let us also define for any subset $A\subset V_N$ a 
Markov chain 
${\bf X}_A=\{X_{A,k}\}_{k = 1}^{\infty}$  on 
the state space $V_N\setminus A$ with  transition probability matrix defined as follows
$$P_A: = (P_A(u, v))_{u, v \in V_N\setminus A}$$
defined by  
\begin{equation}\label{XA}
P_A(u, v)=
\left\{
\begin{array}{ll}
 \dfrac{p(u, v)}{Z_{N,A}}, &  u\neq v, \\
  0, & u=v,
 \end{array}
 \right.
\end{equation}
with
\begin{equation}\label{ZNA}
Z_{N,A} =
\sum \limits_{ v \in V_N \setminus A: \ u\neq v }
p(u, v).
\end{equation}

We shall prove that the limiting distribution of (finite state) Markov chain $\{X_k\}$ is uniform on $V_N$. Let  $$\pi=\left( \pi(v)= \dfrac{1}{N^2}, v\in V_N\right)$$ denote this distribution.
 
 Recall some definitions.
\begin{Def}
	The total variation distance between two probability distributions $\mu$ and $\nu$ on a finite space $\mathcal{X}$ is defined by
	\begin{equation*}
		\| \mu - \nu \|_{TV} = \max\limits_{A \subseteq \mathcal{X}} |\mu(A) - \nu(A)|.
	\end{equation*}
      \end{Def}

      \begin{Prop}[\cite{LevinPeres} Proposition 4.2]
	Let $\mu$ and $\nu$ be two probability distributions on a finite space $\mathcal{X}$. Then
	\begin{equation*}
		\| \mu - \nu \|_{TV} = \dfrac{1}{2} \sum\limits_{x \in \mathcal{X}} |\mu(x) - \nu(x)|.
	\end{equation*}
      \end{Prop}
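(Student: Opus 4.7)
The plan is to prove this classical identity by identifying the set $A \subseteq \mathcal{X}$ that achieves the maximum in the definition of total variation distance, and then showing that the corresponding value equals $\tfrac{1}{2}\sum_x |\mu(x) - \nu(x)|$.

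First I would introduce the set $B = \{x \in \mathcal{X} : \mu(x) \geq \nu(x)\}$, and write for an arbitrary $A \subseteq \mathcal{X}$
\[
\mu(A) - \nu(A) = \sum_{x \in A \cap B}(\mu(x) - \nu(x)) - \sum_{x \in A \cap B^c}(\nu(x) - \mu(x)).
\]
Since the first sum has nonnegative summands and the second has nonpositive contribution, enlarging $A \cap B$ to all of $B$ and shrinking $A \cap B^c$ to $\emptyset$ can only increase $\mu(A) - \nu(A)$. Therefore $\mu(A) - \nu(A) \leq \mu(B) - \nu(B)$. A symmetric argument applied to $\nu(A) - \mu(A)$ shows that this quantity is maximized at $A = B^c$.

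Next I would observe that the identity $\mu(\mathcal{X}) = \nu(\mathcal{X}) = 1$ gives $\mu(B) - \nu(B) = \nu(B^c) - \mu(B^c)$, so both extremal differences coincide. Adding them yields
\[
2\bigl(\mu(B) - \nu(B)\bigr) = \sum_{x \in B}(\mu(x) - \nu(x)) + \sum_{x \in B^c}(\nu(x) - \mu(x)) = \sum_{x \in \mathcal{X}} |\mu(x) - \nu(x)|.
\]
Combining with the maximality just established, this gives $\|\mu - \nu\|_{TV} = \mu(B) - \nu(B) = \tfrac{1}{2}\sum_x |\mu(x) - \nu(x)|$.

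There is no real obstacle here, as the argument is entirely combinatorial; the only point that requires any care is the symmetry step, i.e., verifying that the $\max$ over $A$ (without absolute values in the definition, or with them) yields the same value, which follows from the mass-preservation identity $\mu(\mathcal{X}) = \nu(\mathcal{X})$.
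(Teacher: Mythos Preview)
Your argument is correct and is essentially the standard proof found in the cited reference \cite{LevinPeres}; the paper itself does not supply a proof of this proposition but simply quotes it from Levin--Peres. There is nothing to compare: your proof is the canonical one.
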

  
The last result immediately gives us a useful bound 
	\begin{equation*}\label{Cor_estimation_l_norm_and_TV}
		\|\mu - \nu\|_{l_{\infty}} = \max\limits_{x \in \mathcal{X}} |\mu(x) - \nu(x)| \leq \|\mu - \nu\|_{l_1} = 2 \| \mu - \nu \|_{TV}.
	\end{equation*}

        Now we can derive the stationary distribution of the introduced above Markov chain $X$.
        The following Lemma repeats arguments of \cite{LevinPeres}, but we provide the lines of the proof adopted for our case. 
        
\begin{Lemma}[\cite{LevinPeres} Theorem 4.9] \label{LmLevin_Peres}
  The stationary distribution of $\{X_k\}_{k = 1}^{\infty}$ for any initial state $X_1=a\in V_N$ is uniform on $V_N$. Furthermore, for all $k>1$
	\begin{equation*}\label{TV_bound}
		\max\limits_{u \in V_N} \| P^k(u, \cdot) - \pi ( \cdot)\|_{TV} \leq \left ( 1 - c_1 \right )^{k/2 - 1}
              \end{equation*}
              uniformly in $u\in V_N,$
              where $$c_1=\frac{c}{2}= \frac{c^{cr}}{2} = \frac{1}{8\log2}<1.$$
            \end{Lemma}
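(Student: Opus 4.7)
The plan is to follow the classical Doeblin coupling argument in the form presented in Levin--Peres, adapted to our setting on the torus. I would organise the proof in three steps.

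First, I would observe that the matrix $P$ defined in (\ref{TranProbMatr}) is symmetric, hence doubly stochastic. Indeed, $p(u,v)$ depends on $u,v$ only through the torus distance $\rho(u,v)$, and by the translation invariance of $\mathbb{T}_N^2$ the normaliser $Z_N = \sum_{w \neq u} p(u,w)$ in (\ref{ZN}) is the same for every $u$. Therefore $P(u,v) = p(u,v)/Z_N = p(v,u)/Z_N = P(v,u)$, and the uniform law $\pi(v) = 1/|V_N|$ is stationary regardless of the initial state.

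Next, the core step is a uniform two-step minorisation
\begin{equation*}
P^2(u,v) \;\geq\; c_1 \, \pi(v) \;=\; \frac{c_1}{N^2}, \qquad u,v \in V_N,
\end{equation*}
with $c_1 = c/2 = 1/(8\log 2)$, valid for all sufficiently large $N$. To establish this I would argue as follows. For the critical value of $c$, as soon as $N \geq c$ the $\min$ in (\ref{puv}) is never activated, so $p(w,v) = c/(N\rho(w,v))$. Since on the torus $\rho(w,v) \leq N$ for every $w \neq v$, the one-step lower bound
\begin{equation*}
P(w,v) \;\geq\; \frac{c}{N^2 Z_N}, \qquad w \neq v,
\end{equation*}
holds uniformly. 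Writing $P^2(u,v) = \sum_{w \neq u, w \neq v} P(u,w)\, P(w,v)$ and pulling out this lower bound gives
\begin{equation*}
P^2(u,v) \;\geq\; \frac{c}{N^2 Z_N}\bigl(1 - P(u,v)\bigr),
\end{equation*}
where the factor $1 - P(u,v)$ covers the excluded term $w = v$ (the case $u = v$ is handled identically, using $P(u,u) = 0$). From (\ref{ZN}) we have $Z_N \to 1$, and the crude estimate $P(u,v) \leq c/(N Z_N) = o(1)$ shows that the right-hand side exceeds $c/(2 N^2) = c_1/N^2$ for all large $N$.

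Finally, I would conclude by the standard Doeblin decomposition. The minorisation allows us to write $P^2 = c_1 \Pi + (1-c_1)\, Q$, where $\Pi$ is the rank-one matrix whose every row equals $\pi$ and $Q = (P^2 - c_1 \Pi)/(1 - c_1)$ is itself a stochastic matrix. Iterating and using $\Pi P^2 = \Pi$ yields
\begin{equation*}
P^{2m}(u,\cdot) - \pi \;=\; (1 - c_1)^m \bigl(Q^m(u,\cdot) - \pi\bigr),
\end{equation*}
so that $\|P^{2m}(u,\cdot) - \pi\|_{TV} \leq (1 - c_1)^m$. For arbitrary $k$, a further application of $P$ is a contraction in total variation, hence $\|P^{k}(u,\cdot) - \pi\|_{TV} \leq (1-c_1)^{\lfloor k/2 \rfloor} \leq (1-c_1)^{k/2 - 1}$, which is the announced estimate.

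The only delicate point is the two-step minorisation: the single-step minimum of $P(u,\cdot)$ is of the same order $1/N^2$ as the target, so one must use the fact that $\sum_{w\neq u} P(u,w) = 1$ to convert "minimum over $w$ of $P(w,v)$" into a lower bound with a genuine constant in front of $1/N^2$. Once this is secured, the remainder is mechanical.
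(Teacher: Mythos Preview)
Your proof is correct and follows essentially the same route as the paper: symmetry of $P$ gives the uniform stationary law, the uniform one-step lower bound $p(w,v)\geq c/N^2$ yields the two-step minorisation $P^2(u,v)\geq c_1\pi(v)$, and the Doeblin decomposition $P^2=c_1\Pi+(1-c_1)Q$ is iterated to produce the geometric TV bound. The only cosmetic difference is that the paper handles odd exponents by writing $P^{2k+j}-\Pi=(1-c_1)^k(Q^kP^j-\Pi)$ explicitly, whereas you invoke the TV-contraction property of stochastic matrices; both are equivalent and standard.
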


\begin{proof}
	The chain ${\bf X}$ is irreducible, positive recurrent and aperiodic, hence there exists a unique stationary distribution ${\widetilde {\pi}}$ to which  $X_k$ converges irrespectively to initial distribution. The stationary distribution ${\widetilde {\pi}}$ satisfies 
	\begin{equation*}
		{\widetilde {\pi}}P = {\widetilde {\pi}}.
	\end{equation*}
        Since $P$ is a symmetric stochastic matrix, ${\widetilde {\pi}}(u) = \dfrac{1}{N^2}= \pi(u) $ for any $u \in V_N$.

        By definition (\ref{TranProbMatr}) it follows that 
	\begin{equation}\label{t1}
          P^2(u, v) = \sum\limits_{x \in V_N} P(u, x) P(x, v)
=\dfrac{1}{Z_N^2}
\sum\limits_{\begin{smallmatrix} x \in V_N \\ x \neq u, v \end{smallmatrix}}
p(u, x) p(x, v).
\end{equation}
Taking into account (\ref{ZN}) 
we get from (\ref{t1})
	\begin{equation}\label{t2}
    P^2(u, v)  \geq \dfrac{1}{Z_N^2}\frac{c}{N^2}
  \sum\limits_{\begin{smallmatrix} x \in V_N \\ x \neq u, v \end{smallmatrix}}
  p(x, v)= \frac{c}{N^2}\left(1+O \left(\frac{1}{N}\right)\right)   \geq   
\frac{c}{2 }\pi(v)
\end{equation}
for all large $N$.
        This allows us to use the following construction from \cite{LevinPeres}. 
	Let $\Pi$ be the $N^2\times N^2$ matrix with each row equal  $\pi$, and set
        \[c_1=\frac{c}{2 }\]
Then the equation 
	\begin{equation*}\label{P2k_basis_induction}
		P^2 = c_1\Pi + (1 - c_1) Q,
              \end{equation*}
              which holds for by (\ref{t2})
	defines a stochastic matrix Q. One can prove by induction (consult \cite{LevinPeres})
        that 
	\[
		P^{2 k} = (1 - (1 - c_1)^k) \Pi + (1 - c_1)^k Q^k,
              \]
              which is equivalent to
              	\begin{equation*}\label{P2k_step_induction}
		P^{2 k} -  \Pi =  (1 - c_1)^k (Q^k - \Pi).
              \end{equation*}
This together with an observation that for a stochastic matrix $P$ it holds that $\Pi P=\Pi$, implies for any $j \geq 1$
	\begin{equation*}\label{P2kj}
		P^{2 k + j} - \Pi = (1 - c_1)^k (Q^k P^j - \Pi).
              \end{equation*}
              Hence,
              \[	\| P^{2 k + j}(u, \cdot) - \pi(\cdot) \|_{TV} \leq (1 - c_1)^k
\| Q^k P^j(u, \cdot) - \pi(\cdot) \|_{TV} \leq (1 - c_1)^k,
\]
where the last inequality is due to the assumption that both $P$ and $Q$ are stochastic matrices. This finishes the proof.
\end{proof}

The last lemma yields as well similar result 
when set $V_N$ is replaced 
by a smaller one.

\begin{Cor}\label{CTA}
	Let $A\subset V_N$ be that $$|A| =O\left( |V_N|^{2/3} \right).$$ 
	Then
	 the stationary distribution of ${\bf X}_A$ for any initial state $X_1=a\in V_N\setminus A$ is asymptotically  uniform on $V_N\setminus A$. Furthermore, for all $k>1$
	\begin{equation*}\label{TV_bound2}
	\max\limits_{u \in V_N\setminus A} \| P_A^k(u, \cdot) - \pi ( \cdot)\|_{TV} \leq \left ( 1 - c_1 \right )^{k/2 - 1}
	\end{equation*}
	uniformly in $u\in V_N\setminus A.$
\end{Cor}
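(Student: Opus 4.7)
The plan is to re-run the proof of Lemma \ref{LmLevin_Peres} almost verbatim, noting two modifications caused by restricting the chain to $V_N \setminus A$: (i) the normalizer $Z_{N,A}(u)$ now depends on $u$, so $P_A$ is no longer symmetric and the uniform measure on $V_N \setminus A$ is not exactly stationary; (ii) the state space has shrunk by $O(N^{4/3})$ vertices. Both perturbations are of order $N^{-1/3}$ and are absorbed into the $o(1)$ in the statement.

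First I would establish the uniform estimate
\[
Z_{N,A}(u) \; = \; Z_N - \sum_{v \in A \setminus \{u\}} p(u,v) \; = \; 1 + O(N^{-1/3}),
\]
valid uniformly in $u \in V_N \setminus A$: since $|A| = O(|V_N|^{2/3}) = O(N^{4/3})$, the bound (\ref{Pr1a1}) gives $\sum_{v \in A} p(u,v) \leq 4c\sqrt{|A|}/N = O(N^{-1/3})$, while $Z_N = 1 + O(1/N)$ by (\ref{ZN}). Next I would reproduce the minorization step (\ref{t1})--(\ref{t2}): for any $u,v \in V_N \setminus A$,
\[
P_A^2(u,v) \;=\; \frac{1}{Z_{N,A}(u)} \sum_{\substack{x \in V_N \setminus A \\ x \neq u, v}} \frac{p(u,x)\, p(x,v)}{Z_{N,A}(x)} \;\geq\; \frac{c}{N^2}\bigl(1 + o(1)\bigr),
\]
where I used the elementary bound $p(u,x) \geq c/N^2$ (valid on the 2-torus since $\rho(u,x) \leq N$), the estimate on $Z_{N,A}$ just established, and the fact that $\sum_{x \in V_N \setminus A, x \neq u,v} p(x,v) = Z_{N,A}(v) - p(u,v) = 1 + o(1)$. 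Since $|V_N \setminus A| = N^2 (1 + o(1))$, this yields $P_A^2(u,v) \geq (c/2)\, \pi(v)$ for all large $N$, with $\pi$ uniform on $V_N \setminus A$ and $c_1 = c/2$.

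From this Doeblin minorization, the argument of Lemma \ref{LmLevin_Peres} carries through almost line-by-line: the chain ${\bf X}_A$ on the finite set $V_N \setminus A$ is irreducible and aperiodic, hence has a unique stationary distribution ${\widetilde {\pi}}_A$, and the standard two-step coupling induced by the minorization gives $\|P_A^k(u,\cdot) - {\widetilde {\pi}}_A(\cdot)\|_{TV} \leq (1-c_1)^{k/2 - 1}$. The only genuinely new point — and the main (though minor) obstacle — is identifying the stationary ${\widetilde {\pi}}_A$ as close to uniform: the symmetry argument that in the proof of Lemma \ref{LmLevin_Peres} trivially gave ${\widetilde {\pi}} = \pi$ is no longer available. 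To handle this I would substitute the estimate $Z_{N,A}(u) = 1 + O(N^{-1/3})$ into the stationarity relation ${\widetilde {\pi}}_A(v) = \sum_u {\widetilde {\pi}}_A(u)\, p(u,v)/Z_{N,A}(u)$, factor out the common $1 + O(N^{-1/3})$, and use the symmetry of $p(u,v)$ to conclude $\|{\widetilde {\pi}}_A - \pi\|_{TV} = O(N^{-1/3})$, so the stationary is indeed asymptotically uniform and this perturbation is absorbed into the exponential bound stated in the corollary.
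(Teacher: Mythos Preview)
Your proposal is correct and mirrors the paper's approach: establish the two-step Doeblin minorization $P_A^2(u,v) \geq (c/2)\,\pi(v)$ from the estimate $Z_{N,A}=1+O(N^{-1/3})$, then repeat the coupling argument of Lemma~\ref{LmLevin_Peres}. You are in fact more careful than the paper in flagging the $u$-dependence of $Z_{N,A}(u)$ and in justifying the asymptotic uniformity of $\tilde{\pi}_A$ (the paper glosses over both); as a shortcut for the latter, note that $\tilde{\pi}_A(u)\propto Z_{N,A}(u)$ satisfies detailed balance since $p(\cdot,\cdot)$ is symmetric, which gives $\tilde{\pi}_A(u)=(1+O(N^{-1/3}))/|V_N\setminus A|$ directly.
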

\begin{proof}
		First we observe that by Lemma \ref{Lm_expectation_n_vertices} and bound
		(\ref{Pr1a1}) from Proposition \ref{Prop1}
		we have
		\begin{equation*}\label{TM31}
	Z_{N,A}	= \sum\limits_{u \in V_N \setminus A} p(u, v)\leq 1+O\left(\frac{1}{N}\right)
		-O\left({\frac{\sqrt{|A|}}{N}}\right)= 1 + O \left ( 1 / N^{1/3} \right ),
		\end{equation*}
		which also  yields
		\begin{equation}\label{PropEstsum2}
		\sum\limits_{u \in V_N \setminus A} P_{A}(u, v) = 1 + O \left ( 1 / N^{1/3} \right ).
		\end{equation}
		Hence, 
		\begin{equation}\label{TM32}
                  P_{A}^2(u, v) = \sum\limits_{x \in V_N \setminus A} P_{A}(u, x) P_{A}(x, v) \geq \frac{1}{Z_{N,A}^2}
                  \dfrac{c}{N^2} \left  ( 1 + o(1) \right ) \geq  \dfrac{c}{2} \pi(v)
		\end{equation}
                for all large $N$.
		
	Having (\ref{TM32}) allows us to repeat the proof of Lemma \ref{LmLevin_Peres} and to derive the statement of the Corollary.
\end{proof}

Let us also establish here a useful property of matrix $P$.
\begin{Prop}\label{Lm_ineq_Markov_chain}
	For any $u, v \in V_N$ and for any integer $k \geq 2$
	\begin{equation}\label{Lm_ineq_Markov_chain_ineq}
	P^{k}(u, v) \leq P^{2} \left (u, u \right ) = \dfrac{4 c^2 \log{N}}{N^2} + O(1 / N^2).
	\end{equation}
      \end{Prop}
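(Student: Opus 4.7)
The plan is to split Proposition~\ref{Lm_ineq_Markov_chain} into the two independent assertions that compose it: the pointwise bound $P^k(u,v)\le P^2(u,u)$ valid for every $u,v\in V_N$ and $k\ge 2$, and the asymptotic evaluation $P^2(u,u)=4c^2\log N/N^2+O(1/N^2)$. The asymptotic is an immediate consequence of material already established in Section~\ref{SectionBasicPropertiesG}; the substantive content is the pointwise inequality.

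For the asymptotics I would expand
\[P^2(u,u)=\sum_{x\ne u}P(u,x)P(x,u)=\frac{1}{Z_N^{\,2}}\sum_{x\ne u}p(u,x)^2,\]
using the symmetry $p(u,x)=p(x,u)$ and $p(u,u)=0$. Substituting (\ref{Pr1a2}) from Proposition~\ref{Prop1} together with $Z_N=1+O(1/N)$ from (\ref{ZN}) gives the claim, since the cross error $O(1/N)\cdot O(\log N/N^2)=O(\log N/N^3)$ is absorbed into $O(1/N^2)$.

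For the pointwise bound I would exploit two structural features of $P$: it is symmetric, since $p(u,v)=p(v,u)$ and $Z_N$ does not depend on the starting vertex; and its diagonal entries satisfy $P^k(u,u)=P^k(v,v)$ for all $u,v$, by the translation invariance of the torus distance and hence of $P$. For even $k=2m$, Cauchy--Schwarz yields
\[P^{2m}(u,v)=\sum_x P^m(u,x)P^m(v,x)\le\sqrt{P^{2m}(u,u)\,P^{2m}(v,v)}=P^{2m}(u,u).\]
To dominate $P^{2m}(u,u)$ by $P^2(u,u)$, I would use the spectral decomposition of the real symmetric matrix $P$: writing $P^{2m}(u,u)=\sum_i\lambda_i^{2m}\phi_i(u)^2$ for an orthonormal eigenbasis $(\phi_i)$ with $|\lambda_i|\le 1$ (stochasticity), the monotonicity $\lambda_i^{2m}\le\lambda_i^2$ for $m\ge 1$ yields $P^{2m}(u,u)\le P^2(u,u)$.

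The odd case $k=2m+1\ge 3$ is the one point not covered by Cauchy--Schwarz directly, and is the main (minor) obstacle. I would handle it by decomposing $P^k(u,v)=\sum_x P^{2m}(u,x)P(x,v)$; the even-exponent bound just established supplies $P^{2m}(u,x)\le P^2(u,u)$ uniformly in $x$, while $P$ is doubly stochastic by symmetry, so $\sum_x P(x,v)=1$. Multiplying gives $P^k(u,v)\le P^2(u,u)$, completing the proof. Beyond this odd/even split the argument reduces to one Cauchy--Schwarz, one spectral monotonicity, and one arithmetic expansion, all routine.
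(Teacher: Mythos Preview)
Your proof is correct, and the asymptotic evaluation is handled identically in the paper. For the inequality, however, the paper takes a shorter and more elementary route: it proves the base case $P^2(u,v)\le P^2(u,u)$ by completing the square,
\[
P^2(u,u)-P^2(u,v)=\tfrac{1}{2}\sum_{w}\bigl(P(u,w)-P(w,v)\bigr)^2\ge 0,
\]
and then inducts from $k$ to $k+1$ via $P^{k+1}(u,v)=\sum_w P^k(u,w)P(w,v)\le P^2(u,u)\sum_w P(w,v)=P^2(u,u)$, using only double stochasticity. Your odd-exponent step is exactly this inductive step, so had you iterated it from $k=2$ you would recover the paper's argument without any spectral input. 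What your spectral decomposition buys is a conceptual explanation of the diagonal monotonicity $P^{2m}(u,u)\le P^2(u,u)$, but at the cost of invoking the eigenbasis of $P$; the paper's induction sidesteps this entirely.
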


\begin{proof}
	We prove this inequality by induction. For $k = 2$
	\begin{eqnarray}
	& &P^{2} \left (u, u \right ) - P^{2} \left (u, v \right ) = \sum\limits_{w \in V_N} P(u, w) P(w, u) - \sum\limits_{w \in V_N} P(u, w) P(w, v)  \nonumber \\ \nonumber
	&&= \dfrac{1}{2} \left (2 \sum\limits_{w \in V_N} (P(u, w))^2 - 2 \sum\limits_{w \in V_N} P(u, w) P(w, v) \right )  \\ \nonumber
	&&= \dfrac{1}{2} \left (\sum\limits_{w \in V_N} (P(u, w))^2 + \sum\limits_{w \in V_N} (P(w, v))^2 - 2 \sum\limits_{w \in V_N} P(u, w) P(w, v) \right )  \\ \nonumber
	&&= \dfrac{1}{2} \left (\sum\limits_{w \in V_N} \left ( P(u, w) - P(w, v) \right )^2 \right ) \geq 0 .
	\end{eqnarray}
	If the inequality in $(\ref{Lm_ineq_Markov_chain_ineq})$ holds for $k$, then for $k+1$
	\begin{eqnarray}
	&&P^{k+1}(u, v) = \sum\limits_{w \in V_N} P^k (u, w) P(w, v) \nonumber \\ \nonumber
	&&\leq P^2 (u, u) \sum\limits_{w \in V_N}  P(w, v)= P^2 (u, u) .
	\end{eqnarray}
	Thus the inequality in $(\ref{Lm_ineq_Markov_chain_ineq})$ has been proved. 
	
	To prove the last equality  in $(\ref{Lm_ineq_Markov_chain_ineq})$ consider
	\begin{eqnarray*}\label{TJ57}
	P^{2} \left (u, u \right ) && = \sum\limits_{v \in V_N} P(u, v)P(v,u) = \dfrac{1}{Z_N^2} \sum\limits_{\begin{smallmatrix} v \in V_N \\ v\neq u \end{smallmatrix}} (p(u, v))^2  \\ \nonumber
                                   && = \dfrac{1}{Z_N^2} \sum\limits_{r = 1}^{N} p_r^2N_r
                                      = \dfrac{4 c^2 \log{N}}{N^2} + O(1 / N^2).
	\end{eqnarray*}
which is due to (\ref{ZN}) and Proposition \ref{Prop1}. This ends the proof. 
\end{proof}

Proposition \ref{Lm_ineq_Markov_chain} together with (\ref{t1}) gives us another useful bound:
      	\begin{equation}\label{tt1}
\sum\limits_{\begin{smallmatrix} x \in V_N \\ x \neq u, v \end{smallmatrix}}
p(u, x) p(x, v) = Z_N^2  P^2(u, v)  < 5c^2\frac{\log N}{N^2}
\end{equation}
for all large $N$.

\subsection{Random walk on tree.}\label{SectionRWT}

Although the introduced Markov chain ${\bf X}$ clearly resembles construction of connected components in $T_k$, the major difference is that trajectories of ${\bf X}$ on $V_N$ may have self-intersections. 
Therefore we shall modify ${\bf X}$ into another 
random walk on $V_N$ to approximated closely components of $T_k$.

Given a sequence of increasing subsets ${\bf A}=\{A_i\}_{i\geq 1}$ in $V_N$:
\[A_1 \subseteq  A_2 \subseteq \ldots A_k \subseteq \ldots\subset V_N,  \]
define with a help of matrix $P_A$ introduced above
 a random walk without cycles and which avoids consecutively elements of ${\bf A}$ as follows.

\begin{Def}\label{Xtil}
  For a sequence ${\bf A}$ let ${\bf\tilde{X}}_{\bf A}=\{ \tilde{X}_{{\bf A},i}\}_{i \geq 1}$ be a random walk (``with memory'') on $V_N$ with initial state $\tilde{X}_{{\bf A},1}=a_1$, whose transition probabilities are defined  conditionally on its own past trajectory
as follows
\[\mathbb{P}\{\tilde{X}_{{\bf A}, i+1}=a_{i+1} |\tilde{X}_{{\bf A},1}=a_1, \ldots \tilde{X}_{{\bf A},i}=a_i\}= P_{A_i\cup \{a_1,...,a_i\}}(a_i,a_{i+1}),\]
\[a_{i+1}\in V_N \setminus (A_i\cup \{a_1,...,a_i\}), \ \ \ i \geq 1.\]
  \end{Def}

This construction allows us to approximate 
distribution of trees 
in the representation (\ref{Tree}) for $T_k$.
Consider for $i<j<k$, $L>0$  an event that $v_i$ is directly connected  to $v_j$ in $T_k$, i.e., there is a path $(v_i\rightarrow v_j)_{T_k}$. Assuming further that $v_i$ is connected  by $L$ edges to $v_j$ for some $L\geq 1 $, and $I_{i-1}=A_1\subset V_N$ we get for any $b\not \in A_{L}$ and $a \in A_1$
\[\min_{{\bf A} : A_L \not\owns b}
\mathbb{P}\{\tilde{X}_{{\bf A}, L+1}=b \big|\tilde{X}_{{\bf A},1}=a\}\]
\begin{equation}\label{TJ1}
\leq
\mathbb{P}\{v_j=b \big| v_i=a, \ |v_i, v_j|_{T_k}=L, \  |I_k|=K\} 
\end{equation}

\[
\leq \max_{{\bf A} : A_L \not \owns b}
\mathbb{P}\{\tilde{X}_{{\bf A}, L+1}=b \big|\tilde{X}_{{\bf A},1}=a\},
\]
where the minimum and the maximum are taken over
all sequences $${\bf A}=(A_1, \ldots, A_{L+1})$$ such that
\begin{equation}\label{seqA1}
 A_1 \subseteq A_2\subseteq \ldots \subseteq A_{L+1}, \ \left| A_{L+1} \right|\leq K . 
\end{equation}

By its construction a trajectory $\{\tilde{X}_{{\bf A},i}, \ i=1, \ldots ,k\}$ passes through different vertices in $V_N$, and moreover avoids consecutively sets $A_i, i\leq k.$ However, for sufficiently small $k$ it  is still
close in distribution to ${\bf {X}}$ as we show now.

\begin{Lemma}\label{LMarkov1}
	Let 
	\begin{equation*}\label{C0}
	C_0=-\dfrac{6}{ \log \left(1 - \frac{c_{cr}}{2}\right)}.
	\end{equation*}
	 For any constants $C, C_1$ such that 
	\begin{equation*}\label{CC}
	C_0<C<C_1,
	\end{equation*}
	 all
\begin{equation}\label{klog}	
	C \log N \leq k \leq C_1 \log N , 
	\end{equation}
        and any sequence ${\bf A}$ such that
\begin{equation}\label{seqA}
{\bf A}=(A_1, \ldots, A_k): A_1=A \subseteq A_2\subseteq \ldots \subseteq A_k
	\end{equation}
        with 
	 \begin{equation}\label{K}
\left| A_k \right|	  =O\left(
	 |V_N|^{2/3}
	 \right),	 
	 \end{equation}
	  one has
	\begin{equation*}\label{TJ4}
	\left | \mathbb{P}(\tilde{X}_{{\bf A},k+1} = b \big | \tilde{X}_{{\bf A},1} = a) - \dfrac{1}{N^2} \right | = O\left ( \dfrac{\log ^2 N}{N^{7/3}} \right )  
	\end{equation*}
	for all $a\in V_N$ and $ b \in V_N \setminus A_k \setminus \{a\}$.
\end{Lemma}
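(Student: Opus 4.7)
The plan is to reduce the problem to the unrestricted Markov chain ${\bf X}$ of Section~\ref{SectionMC}, for which mixing was already controlled in Lemma~\ref{LmLevin_Peres}. Expanding by conditioning along the full history, we write
\begin{equation*}
\mathbb{P}(\tilde{X}_{{\bf A}, k+1} = b \mid \tilde{X}_{{\bf A},1} = a) = \sum \prod_{i=1}^{k} P_{B_i}(a_i, a_{i+1}),
\end{equation*}
where the sum ranges over sequences $a = a_1, a_2, \ldots, a_{k+1} = b$ satisfying $a_{i+1} \notin B_i := A_i \cup \{a_1, \ldots, a_i\}$ at every step. The idea is to express each $P_{B_i}$ as $P$ times a small multiplicative correction, replace the restricted sum by $P^k(a,b)$ minus the contribution of ``forbidden'' paths (those with a self-intersection or a visit to some $A_j$), and then invoke Lemma~\ref{LmLevin_Peres} to identify $P^k(a,b)$ with $1/N^2 + O(N^{-3})$ as soon as $k \geq C_0 \log N$.

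For the multiplicative distortion, observe that $P_{B_i}(a_i, a_{i+1}) = P(a_i, a_{i+1}) \cdot Z_N / Z_{N, B_i}$. By assumption (\ref{K}) together with (\ref{klog}) we have $|B_i| \leq |A_k| + k = O(|V_N|^{2/3})$, so (\ref{Pr1a1}) gives $\sum_{v \in B_i} p(a_i, v) = O(\sqrt{|B_i|}/N) = O(N^{-1/3})$ uniformly in $i$ and in the path. Combined with the estimate $Z_N = 1 + O(1/N)$ from (\ref{ZN}), this yields $Z_N / Z_{N, B_i} = 1 + O(N^{-1/3})$, and since $k = O(\log N)$,
\begin{equation*}
\prod_{i=1}^{k} \frac{Z_N}{Z_{N, B_i}} = 1 + O(\log N / N^{1/3}).
\end{equation*}

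Next, let $\Sigma_{\mathrm{bad}}$ denote $\sum \prod_i P(a_i, a_{i+1})$ taken over paths from $a$ to $b$ violating at least one constraint of $\tilde{X}_{{\bf A}}$. I split $\Sigma_{\mathrm{bad}}$ by violation type and bound each using Proposition~\ref{Lm_ineq_Markov_chain}, which provides $P^m(u,v) \leq P^2(u,u) = O(\log N / N^2)$ for every $m \geq 2$, together with (\ref{Pr1a1}). A visit $a_{j+1} \in A_j$ at a boundary step ($j=1$ or $j=k-1$) contributes at most $O(\sqrt{|A_k|}/N) \cdot O(\log N / N^2) = O(\log N / N^{7/3})$, where $\sqrt{|A_k|}/N = O(N^{-1/3})$ and the $p$-symmetry is used at $j = k-1$. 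At an interior step ($2 \leq j \leq k-2$) the contribution is at most $(\max_w P^{k-j}(w,b)) \sum_{w \in A_j} P^j(a,w) = O(|A_j| \log^2 N / N^4)$, summing to $O(\log^3 N / N^{8/3})$ over $j$. Self-intersections $a_{j'} = a_j$ for $j-j' \geq 2$ contribute, by summing $P^{j'-1}(a,x) P^{j-j'}(x,x) P^{k+1-j}(x,b)$ and applying Proposition~\ref{Lm_ineq_Markov_chain} twice, at most $O(k^2 \log^3 N / N^4)$ in total (shorter returns use $P(x,x)=0$; the degenerate pair $j'=1, j=k+1$ is excluded since $a \neq b$). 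The dominant term is therefore $\Sigma_{\mathrm{bad}} = O(\log N / N^{7/3})$.

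Finally, Lemma~\ref{LmLevin_Peres} and the choice $C_0 = -6/\log(1-c_1)$ give $(1-c_1)^{k/2-1} = O(N^{-3})$ for $k \geq C \log N > C_0 \log N$, hence $P^k(a,b) = 1/N^2 + O(N^{-3})$. Combining all three estimates,
\begin{equation*}
\mathbb{P}(\tilde{X}_{{\bf A}, k+1} = b \mid \tilde{X}_{{\bf A},1} = a) = \bigl(1 + O(\log N / N^{1/3})\bigr)\left(\frac{1}{N^2} + O(N^{-3}) - O(\log N / N^{7/3})\right) = \frac{1}{N^2} + O\!\left(\frac{\log^2 N}{N^{7/3}}\right),
\end{equation*}
as required. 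The main obstacle is the case analysis for $\Sigma_{\mathrm{bad}}$: the uniform bound $P^m = O(\log N / N^2)$ from Proposition~\ref{Lm_ineq_Markov_chain} requires $m \geq 2$, so the boundary violations where only a single factor $P$ separates the violation from $a$ or $b$ must instead be handled via (\ref{Pr1a1}), and it is precisely these terms, together with the per-step distortion $O(N^{-1/3})$ accumulated over $O(\log N)$ steps, that set the final order $O(\log^2 N / N^{7/3})$.
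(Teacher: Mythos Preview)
Your proof is correct and follows essentially the same strategy as the paper: replace each restricted transition $P_{B_i}$ by $P$ times a multiplicative correction $1+O(N^{-1/3})$ (accumulating to $1+O(\log N/N^{1/3})$ over $k=O(\log N)$ steps), subtract the contribution $\Sigma_{\mathrm{bad}}$ of forbidden paths, bound that contribution via Proposition~\ref{Lm_ineq_Markov_chain} together with (\ref{Pr1a1}), and finish with the mixing estimate $|P^k(a,b)-1/N^2|=O(N^{-3})$ from Lemma~\ref{LmLevin_Peres}. Your case analysis for $\Sigma_{\mathrm{bad}}$ (separating boundary versus interior visits to $A_j$ and treating self-intersections explicitly) is somewhat more detailed than the paper's compressed treatment in (\ref{TJ7}), and in fact yields the slightly sharper dominant term $O(\log N/N^{7/3})$, but the argument is the same.
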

\begin{proof}
	
	For any $A\subseteq V_N$ consider the $|V_N|\times|V_N|$ matrix $P_A$ defined in (\ref{XA}). Note that matrix $P_A$ is obtained from $P$ by replacing all
	the elements in the rows and columns corresponding to the vertices $A$ by zero, and then normalizing each row. 
	Hence, for all $v\not \in A$
	\begin{equation}\label{TJ2}
	P(u,v)\leq P_A(u,v)\leq P(u,v)\frac{1}{1-
		\sum\limits_{v\in A}p(u,v)
	}
	,
	\end{equation}
	which by (\ref{Pr1a1}) yields
	\begin{equation}\label{TJ3}
	P(u,v)\leq P_A(u,v) \leq P(u,v)\left(1+O \left({\frac{\sqrt{A}}{N}}\right)\right).
	\end{equation}
	
	By the definition (\ref{Xtil}) we have
	\begin{eqnarray}
	&& \mathbb{P} \left\{ \tilde{X}_{{\bf A}, k+1} = b \big | \tilde{X}_{{\bf A},1} = a \right\} \nonumber \\ \nonumber
	&& = \sum\limits_{\small
		\begin{smallmatrix} 
		u_2, \ldots, u_{k} \in V_N:\\
	 u_{i+1} \not \in A_i\cup\{u_1=a, u_2, \ldots, u_{i}\}
 \end{smallmatrix} 
} P_{A_1\cup\{a\}}(a, u_2) P_{A_2\cup\{a, u_2\}}(u_2, u_3) \ldots   P_{A_{k}\cup\{a, u_2, \ldots, u_{k}\}}(u_{k}, b),
	\end{eqnarray}
	which together with bounds (\ref{TJ3})
	yields
	\begin{eqnarray}\label{vk_v1}
	&& \mathbb{P} \left\{\tilde{X}_{{\bf A}, k+1} = b \big | \tilde{X}_{{\bf A},1} = a\right\} 
	 \\ \nonumber
	&&  =\sum\limits_{\small
		\begin{smallmatrix} 
		u_2, \ldots, u_{k} \in V_N:\\
		u_{i+1} \not \in A_i\cup\{u_1 = a, u_2, \ldots, u_{i}\}
		\end{smallmatrix} 
		} P(a, u_2) P(u_2, u_3) \ldots  P(u_{k}, b)  
	\\ \nonumber
	&& \times \prod_{i=1}^k
	\left(1+O \left({\frac{\sqrt{|A_i|+i}}{N}}\right)\right) ,
	\end{eqnarray}
	Under assumptions (\ref{K}) and (\ref{klog}) here we have 
	\begin{equation*}
	\prod_{i=1}^k
	\left(1+O \left({\frac{\sqrt{|A_i|+i}}{N}}\right)\right)=1+O \left(\frac{\log N}{N^{1/3}}\right)
	\end{equation*}
	uniformly for all sequences ${\bf A}$ satisfying conditions of the lemma. 
	Substituting this into (\ref{vk_v1}) we derive
	\begin{equation}\label{TJ6}
	\mathbb{P} (\tilde{X}_{{\bf A}, k+1} = b \big | \tilde{X}_{{\bf A},1} = a) 
	= \left(1+O \left(\frac{\log N}{N^{1/3}}\right)\right)
	\end{equation}
	\[
	\times \left(P^{k}(a, b)
	-\sum\limits_{\small
		\begin{smallmatrix} 
	\exists	2\leq i \leq k	:\\
	u_i \in A_{i-1}\cup\{u_1=a, u_2, \ldots, u_{i-1}\}
		\end{smallmatrix} 
	} P(a, u_2) P(u_2, u_3)   \ldots  P(u_{k}, b)  
	\right).
	\]
	Making use of  Proposition \ref{Lm_ineq_Markov_chain} and bounds (\ref{Pr1a1}) first we get
	\begin{equation}\label{TJ7}
	\sum\limits_{\small
		\begin{smallmatrix} 
		\exists	2\leq i \leq k	:\\
		u_i \in A_{i-1}\cup\{u_1=a, u_2, \ldots, u_{i-1}\}
		\end{smallmatrix} 
	} 
P(a, u_2) P(u_2, u_3)   \ldots  P(u_{k}, b)  
	\end{equation}
	\[
	\leq 
	\sum_{i=2}^{k} \sum_{u_i \in A_{i-1}\cup\{u_1=a, u_2, \ldots, u_{i-1}\}}
	P(a, u_2) P(u_2, u_3)   \ldots  P(u_{k}, b)  
      \]
\[
	\leq \max_{u\in V_N}P^2(a, u) 
	\sum_{i=3}^{k}
        \sum_{u_i \in A_{i-1}\cup\{u_1=a, u_2, \ldots, u_{i-1}\}}
	P(u_3, u_4)   \ldots  P(u_{k}, b)  
      \]
      
	\[\leq O\left( \dfrac{ \log N}{N^2}\right) \sum_{i=3}^{k} \frac{\sqrt{|A_i|}}{N}
	=O\left( \dfrac{ \log N}{N^2} \frac{\log N}{N^{1/3}}\right). \]
	
	Hence, by (\ref{TJ7}) and (\ref{TJ6})
	\begin{equation*}\label{TJ8}
	\mathbb{P} (\tilde{X}_{{\bf A}, k+1} = b \big | \tilde{X}_{{\bf A},1} = a) 
	-P^{k}(a, b)=O\left( \dfrac{ (\log N)^2}{N^{7/3}}\right).
	\end{equation*}
Then taking also into account Lemma \ref{LmLevin_Peres} with $k\geq C_0 \log N$, we derive
	\begin{eqnarray}
	&& \left | \mathbb{P}(\tilde{X}_{{\bf A}, k+1} = b \big | \tilde{X}_{{\bf A},1} = a) - \dfrac{1}{N^2} \right | \nonumber \\ \nonumber
	&& \leq  \left | \mathbb{P}(\tilde{X}_{{\bf A},k+1} = b \big | \tilde{X}_{{\bf A},1} = a) - P^{k}(a, b) \right | + \left | P^{k}(a, b) - \dfrac{1}{N^2} \right | \\ \nonumber
	&& \leq  O\left ( \dfrac{\log^2 N}{N^{7/3}} \right )  +O\left ( \dfrac{1}{N^{3}} \right ),
	\end{eqnarray}
	which yields the statement of the Lemma.
	\end{proof}

        We shall now extend the statement of the last lemma for larger values of $k$. First we
        establish convergence.

\begin{Lemma}\label{LMarkov2}
		 For all $T>0$ all
	\begin{equation*}\label{k}	
	C_0 \log N <j\leq k \leq T|V_N|^{2/3} , 
	\end{equation*}
	and any sequence ${\bf A}$ satisfying (\ref{seqA}) and (\ref{K}),
	one has 
	\begin{equation*}\label{TJ9}
	 \mathbb{P}\left\{\tilde{X}_{{\bf A},j+1} = b \big | \tilde{X}_{{\bf A},1} = a\right\} - 
	 \mathbb{P}\left\{
	 \tilde{X}_{{\bf A},j+1} = b \big | \tilde{X}_{{\bf A},1} = a'\right\}
	 = O\left ( \dfrac{\log ^2 N}{N^{7/3}} \right )  
	\end{equation*}
	for all $a,a',b\in V_N$.
      \end{Lemma}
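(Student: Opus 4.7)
The plan is to reduce Lemma \ref{LMarkov2} to Lemma \ref{LMarkov1} using a quasi-Markov property of $\tilde{X}_{{\bf A}}$: conditional on its trajectory up to some time $\ell$, the continuation is again a walk of the same form but with the avoidance sequence enlarged by the vertices already visited. Running this continuation for $m\approx C_0\log N$ steps then yields a nearly uniform endpoint distribution that is insensitive to the original starting point.

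Fix any $C_1>C_0$ and an integer $m$ with $C_0\log N<m\leq C_1\log N$ (e.g.\ $m=\lceil C_0\log N\rceil+1$). If $j\leq m$ the claim follows immediately from Lemma \ref{LMarkov1} applied to walks of length $j$ from either starting point, so assume $j>m$ and set $\ell=j+1-m\geq 1$. Conditional on the history $H_\ell=(\tilde{X}_{{\bf A},1},\ldots,\tilde{X}_{{\bf A},\ell})=(a_1,\ldots,a_\ell)$, by Definition \ref{Xtil} the process $(\tilde{X}_{{\bf A},\ell},\tilde{X}_{{\bf A},\ell+1},\ldots,\tilde{X}_{{\bf A},j+1})$ is distributed as an $m$-step walk of the same type starting at $a_\ell$, with the enlarged non-decreasing avoidance sequence
\[
{\bf A}'=\bigl(A_{\ell}\cup\{a_1,\ldots,a_{\ell-1}\},\ldots,A_{j}\cup\{a_1,\ldots,a_{\ell-1}\}\bigr).
\]
Since $|A_j|=O(|V_N|^{2/3})$ and $\ell-1\leq T|V_N|^{2/3}$, the cardinality bound $|A'_m|=O(|V_N|^{2/3})$ required by Lemma \ref{LMarkov1} is preserved.

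Applying Lemma \ref{LMarkov1} to this continuation, uniformly over the history and every target $b\notin A'_m\cup\{a_\ell\}$,
\[
\mathbb{P}\{\tilde{X}_{{\bf A},j+1}=b\mid H_\ell\}=\frac{1}{N^2}+O\!\left(\frac{\log^2 N}{N^{7/3}}\right).
\]
Integrating against the conditional law of $H_\ell$ given $\tilde{X}_{{\bf A},1}=a$, and using that the conditional probability vanishes when $b\in\{a_1,\ldots,a_\ell\}$, gives
\[
\mathbb{P}\{\tilde{X}_{{\bf A},j+1}=b\mid\tilde{X}_{{\bf A},1}=a\}=\mathbb{P}\{b\notin H_\ell\mid\tilde{X}_{{\bf A},1}=a\}\left(\frac{1}{N^2}+O\!\left(\frac{\log^2 N}{N^{7/3}}\right)\right),
\]
and an analogous identity with $a'$ in place of $a$. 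Subtracting, the $O(\log^2 N/N^{7/3})$ error terms contribute at the claimed order, while the remainder is dominated by $N^{-2}\,|\mathbb{P}\{b\in H_\ell\mid a'\}-\mathbb{P}\{b\in H_\ell\mid a\}|$. A union bound together with the uniform single-step estimate from Proposition \ref{Lm_ineq_Markov_chain} (for small $i$) and Lemma \ref{LMarkov1} (for $i>C_0\log N$) yields $\mathbb{P}\{b\in H_\ell\}=O(\ell\log N/N^2)=O(\log N/N^{2/3})$, so this remainder is $O(\log N/N^{8/3})$, comfortably inside the claimed bound.

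The main technical obstacle is the bookkeeping around the non-Markovian memory of $\tilde{X}_{{\bf A}}$: one must carefully justify that conditioning on $H_\ell$ produces a walk of the same type whose enlarged avoidance sequence still satisfies the cardinality hypothesis of Lemma \ref{LMarkov1} uniformly in histories of length up to $T|V_N|^{2/3}$, and then dispose of the marginal contribution from those histories that happen to include the target $b$.
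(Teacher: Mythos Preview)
Your approach is correct and genuinely different from the paper's. The paper proves Lemma~\ref{LMarkov2} by a direct expansion: it writes out both $j$-step path sums, splits the difference into $\Sigma'$ (paths avoiding both $a$ and $a'$) and $\Sigma''$ (paths passing through the other starting point), and then controls $\Sigma'$ via the pointwise decomposition $P_B(u,v)=c/|V_N|+\tilde P_B(u,v)$ together with the contraction bound $\sum_v \tilde P_B(u,v)<1-c/2$. Your argument instead conditions on the history $H_\ell$ up to time $\ell=j+1-m$ and invokes Lemma~\ref{LMarkov1} on the final $m\asymp\log N$ steps; this reuses the mixing estimate already proved rather than redoing a path-level computation. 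What your approach buys is brevity and conceptual clarity---the coupling-to-uniform happens in the last logarithmically many steps, so the starting point is forgotten---while the paper's direct computation is self-contained and does not appeal back to Lemma~\ref{LMarkov1}.

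Two small points worth tightening. First, your appeal to Proposition~\ref{Lm_ineq_Markov_chain} for the bound $\mathbb{P}\{\tilde X_{{\bf A},i}=b\mid a\}=O(\log N/N^2)$ is for the unrestricted chain $P^k$, not for $\tilde X_{\bf A}$; you need the transfer via the two-step bound and \eqref{TJ3} (this is the content of Lemma~\ref{LEp}, whose proof is independent of the present lemma). Also note that the $i=2$ term can be as large as $p(a,b)=O(1/N)$ rather than $O(\log N/N^2)$, but this single term is still absorbed into $O(\log N/N^{2/3})$. Second, the displayed identity for $\mathbb{P}\{\tilde X_{{\bf A},j+1}=b\mid a\}$ tacitly assumes $b\notin A_j$; the case $b\in A_j$ is trivial since both probabilities vanish, but it should be mentioned. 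With these clarifications the argument is complete.
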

      Notice that constant $C_0$ here as defined in Lemma \ref{LMarkov1}.
\begin{proof}
	Making use of the definition (\ref{Xtil}) write
		\begin{eqnarray}\label{TJ10}
	&&\mathbb{P} \left\{ \tilde{X}_{{\bf A}, j+1} = b \big | \tilde{X}_{{\bf A},1} = a \right\} 
	-\mathbb{P} \left\{ \tilde{X}_{{\bf A}, j+1} = b \big | \tilde{X}_{{\bf A},1} = a' \right\} 
	\\ \nonumber
	&& = \sum\limits_{
		\begin{smallmatrix} 
		u_2, \ldots, u_{j} \in V_N:\\
		u_i \not \in A_{i-1}\cup\{a,a',b, u_2, \ldots, u_{i-1}\}
		\end{smallmatrix} 
	} 
\left(
P_{A_1\cup\{a\}}(a, u_2) 
P_{A_2\cup\{a, u_2\}}(u_2, u_3) 
\ldots   \right.
\\ \nonumber
&&
\hspace{4cm}\times  
P_{A_j\cup\{a, u_2, \ldots, u_{j}\}}(u_{j}, b)\\ \nonumber
&& 
\left.
-
P_{A_1\cup\{a'\}}(a', u_2) P_{A_2\cup\{a', u_2\}}(u_2, u_3) \ldots P_{A_j\cup\{a', u_2, \ldots, u_{j}\}}(u_{j}, b)\right)\\  \nonumber \\  \nonumber
&& 
+\sum_{l=2}^{j} \left(
\sum\limits_{
	\begin{smallmatrix} 
	u_2, \ldots, u_{j} \in V_N:
	u_l=a'
	\\ \nonumber
	u_i \not \in A_{i-1}\cup\{a,a',b, u_2, \ldots, u_{i-1}\}
	\end{smallmatrix} 
} 
P_{A_1\cup\{a\}}(a, u_2) 
P_{A_2\cup\{a, u_2\}}(u_2, u_3) 
\ldots   \right.
\\ \nonumber
&&
\hspace{4cm}\times  
P_{A_j\cup\{a, u_2, \ldots, u_{j}\}}(u_{j}, b)\\ \nonumber
&& 
\left.
-\sum\limits_{
	\begin{smallmatrix} 
	u_2, \ldots, u_{j} \in V_N:
	u_l=a
	\\ \nonumber
	u_i \not \in A_{i-1}\cup\{a,a',b, u_2, \ldots, u_{i-1}\}
	\end{smallmatrix} 
} 
P_{A_1\cup\{a'\}}(a', u_2)  \ldots P_{A_j\cup\{a', u_2, \ldots, u_{j}\}}(u_{j}, b)\right)\\
\nonumber\\
\nonumber
&&=:\Sigma^{'} + \Sigma^{''}.
	\end{eqnarray}

	Consider the first difference, i.e., $\Sigma'$ in (\ref{TJ10}). 
	Let 
	\begin{equation*}\label{u}
	\bar{u} = \{u_2, \ldots, u_{j}\} \in V_N \setminus \{a, a', b\}
	\end{equation*}
	 be a set of distinct vertices. 
	Then for any pair $u_i,u_{i+1}$ 
	from this set
	and any $B \subset V_N \setminus \{a, a', b\}$  we have by definition (\ref{XA})
	\begin{eqnarray}
	&& P_{B\cup a'}(u_i, u_{i+1}) = \dfrac{p(u_i, u_{i+1})}{\sum\limits_{v \in V_N \setminus B \setminus \{a'\}} p(u_i, v)} \nonumber \\ \nonumber
	&& = \dfrac{p(u_i, u_{i+1})}{\sum\limits_{v \in V_N \setminus B \setminus \{a\}} p(u_i, v)} \left ( 1 - \dfrac{p(u_i, a) - p(u_i, a')}{\sum\limits_{v \in V_N \setminus B \setminus \{a'\}} p(u_i, v)} \right )  \\ \nonumber \\ \nonumber
	&& = P_{B\cup a}(u_i, u_{i+1})   
	\left ( 1 - \dfrac{p(u_i, a) - p(u_i, a')}{\sum\limits_{v \in V_N \setminus B \setminus \{a'\}} p(u_i, v)} \right ).\\ \nonumber 
	\end{eqnarray}
	Therefore we can rewrite the terms in $\Sigma'$ in (\ref{TJ10}) as follows
	\begin{equation}\label{TJ12}
	P_{A_1\cup\{a'\}}(a', u_2) P_{A_2\cup\{a', u_2\}}(u_2, u_3) \ldots P_{A_j\cup\{a', u_2, \ldots, u_{j}\}}(u_{j}, b)
	\end{equation}
	\[=P_{A_1\cup\{a\}}(a', u_2) P_{A_2\cup\{a, u_2\}}(u_2, u_3) \ldots P_{A_j\cup\{a, u_2, \ldots, u_{j}\}}(u_{j}, b)\]
	\[\prod_{i=2}^{j}\left( 1 - \dfrac{p(u_i, a) - p(u_i, a')}{\sum\limits_{v \in V_N \setminus 
			\{ 
			A_{i}\cup\{a', u_2, \ldots, u_{i}
			\}\}
		} p(u_i, v)} \right). \]
	Note that for any $B$ with $|B|=o(|V_N|)$ by (\ref{Pr1a1}) and (\ref{ZNA}) we have
	\begin{equation}\label{TJ11}
	\prod_{i=2}^{j}\left ( 1 - \dfrac{p(u_i, a) - p(u_i, a')}{\sum\limits_{v \in V_N \setminus B } p(u_i, v)} \right)
	\end{equation}
	\[=1+O\left( \sum_{i=2}^{j} (p(u_i, a) +p(u_i, a'))\right)=1+ O\left(\sqrt{\frac{j}{|V_N|}}\right)\]
	uniformly in $B$ and $u_2, \ldots, u_{j-1}.$ Hence, we derive from 
	 (\ref{TJ12}) with a help of  
	 (\ref{TJ11}) that
	 \begin{equation*}\label{TJ13}
	 P_{A_1\cup\{a'\}}(a', u_2) P_{A_2\cup\{a', u_2\}}(u_2, u_3) \ldots P_{A_j\cup\{a', u_2, \ldots, u_{j}\}}(u_{j}, b)
	 \end{equation*}
	 \[=P_{A_1\cup\{a\}}(a', u_2) P_{A_2\cup\{a, u_2\}}(u_2, u_3) \ldots P_{A_j\cup\{a, u_2, \ldots, u_{j}\}}(u_{j}, b)
	 \left( 
	 1+ O\left({\frac{\sqrt{j}}{N}}\right)
	 \right).
	 \]
	 Now we can rewrite the first sum in (\ref{TJ10}) as follows
\begin{equation}\label{TJ14}
\Sigma'= \sum\limits_{
		\begin{smallmatrix} 
		u_2, \ldots, u_{j} \in V_N:\\
		u_i \not \in A_{i-1}\cup\{a,a',b, u_2, \ldots, u_{i-1}\}
		\end{smallmatrix} 
	} 
\left(
P_{A_1\cup\{a\}}(a, u_2) 
-  P_{A_1\cup\{a'\}}(a', u_2)  \right)
\end{equation}
\[
\hspace{4cm}\times  P_{A_2\cup\{a, u_2\}}(u_2, u_3) \ldots 
P_{A_j\cup\{a, u_2, \ldots, u_{j}\}}(u_{j}, b)\]

\[
 + O\left({\frac{\sqrt{j}}{N}}\right)  \sum\limits_{
		\begin{smallmatrix} 
		u_2, \ldots, u_{j} \in V_N:\\
		u_i \not \in A_{i-1}\cup\{a,a',b, u_2, \ldots, u_{i-1}\}
		\end{smallmatrix} 
	} 
P_{A_1\cup\{a \}}(a', u_2) \ldots P_{A_j\cup\{a , u_2, \ldots, u_{j}\}}(u_{j}, b).  \]

Taking into account that
\[p(u, v) \geq \dfrac{c}{N^2}=\dfrac{c}{|V_N|},\]
we have
for any $B \subset V_N$ with $ |B| =O\left(|V_N|^{2/3}\right)$ and any $u, v \in V_N \setminus B$ the following decomposition
	\begin{eqnarray}\label{TJ16}
	&& P_B(u, v) = \dfrac{c}{|V_N|} + \tilde{P}_B (u, v), 
	\end{eqnarray}
	where $\tilde{P}_B(u, v) \geq 0$. Since $\sum\limits_{v \in V_N \setminus B} P_B(u, v) = 1$, this together with (\ref{ZNA})  implies
	\begin{equation} \label{sumPtilde}
          \sum\limits_{v \in V_N \setminus B} \tilde{P}_B(u, v) = 1 - c + O\left(
            |V_N|^{-1/3 }\right) < 1 - \frac{c}{2}<1
	\end{equation}
        for all large $N.$

        Consider now  the first sum in (\ref{TJ14})     on the right. 
        Note, that if all the involved probabilities would be uniform (as, e.g., term $\dfrac{c}{|V_N|}$ in  (\ref{TJ16})) the difference in the first sum in
        (\ref{TJ14}) would be simply zero. Here with a help of (\ref{TJ16})
we derive from (\ref{TJ14}):
        \begin{equation}\label{TJ15}          
\Sigma'= \sum\limits_{
		\begin{smallmatrix} 
		u_2, \ldots, u_{j} \in V_N:\\
		u_i \not \in A_{i-1}\cup\{a,a',b, u_2, \ldots, u_{i-1}\}
		\end{smallmatrix} 
	} 
\left(
P_{A_1\cup\{a\}}(a, u_2) 
-  P_{A_1\cup\{a'\}}(a', u_2)  \right)
\end{equation}
\[
\times  P_{A_2\cup\{a, u_2\}}(u_2, u_3) \ldots 
P_{A_{j-1}\cup\{a, u_2, \ldots, u_{j-1}\}}(u_{j-1}, u_j)
\left(\dfrac{c}{|V_N|} + \tilde{P}_B (u_{j}, b) \right) 
\]

\[
  + O\left({\frac{\sqrt{j}}{N}}\right)
  O\left(\max_{u \in V_N} \sum_{u_2}
P(a', u_2) P(u_2, u) 
\right)O\left(\max_{u \in V_N} \sum_{u_j}
P(u, u_j) P(u_j, b) 
\right)
\]

\[\times  \sum\limits_{
		\begin{smallmatrix} 
		u_3, \ldots, u_{j-1} \in V_N:\\
		u_i \not \in A_{i-1}\cup\{a,a',b, u_2, \ldots, u_{i-1}\}
		\end{smallmatrix} 
              }
P_{A_3\cup\{a \}}(u_3, u_4) \ldots P_{A_{j-2}\cup\{a , u_2, \ldots, u_{j-2}\}}(u_{j-2},u_{j-1}  ).  \]

         Observe that making use of (\ref{sumPtilde})  we get 
 \begin{equation*}\label{TJ170}          
\sum\limits_{
		\begin{smallmatrix} 
		u_2, \ldots, u_{j} \in V_N:\\
		u_i \not \in A_{i-1}\cup\{a,a',b, u_2, \ldots, u_{i-1}\}
		\end{smallmatrix} 
	} 
\left(
P_{A_1\cup\{a\}}(a, u_2) 
-  P_{A_1\cup\{a'\}}(a', u_2)  \right)
\end{equation*}
\[
\times  P_{A_2\cup\{a, u_2\}}(u_2, u_3) \ldots 
P_{A_{j-1}\cup\{a, u_2, \ldots, u_{j-1}\}}(u_{j-1}, u_j)\left(\dfrac{c}{|V_N|} + \tilde{P}_B (u_{j}, b) \right) 
\]
\[= \sum\limits_{
		\begin{smallmatrix} 
		u_2\not \in A_{1}\cup\{a,a',b\}
		\end{smallmatrix} 
	} \left(
P_{A_1\cup\{a\}}(a, u_2) 
-  P_{A_1\cup\{a'\}}(a', u_2)  \right) 
        \left(1-O(1/N)\right)^{j-2}
        \dfrac{c}{|V_N|} \]
      \[+ \sum\limits_{
		\begin{smallmatrix} 
		u_2, \ldots, u_{j} \in V_N:\\
		u_i \not \in A_{i-1}\cup\{a,a',b, u_2, \ldots, u_{i-1}\}
		\end{smallmatrix} 
	} \sum_{i=0}^{j-3}
\left(
P_{A_1\cup\{a\}}(a, u_2) 
-  P_{A_1\cup\{a'\}}(a', u_2)  \right) \left(1-O(1/N)\right)^{j-i-3}
\]
\[\times\dfrac{c}{|V_N|}
\tilde{P}_B (u_{j-i}, u_{j-i+1})
\ldots  \tilde{P}_B (u_{j}, b) \]

\[+ \sum\limits_{
	\begin{smallmatrix} 
		u_2, \ldots, u_{j} \in V_N:\\
		u_i \not \in A_{i-1}\cup\{a,a',b, u_2, \ldots, u_{i-1}\}
	\end{smallmatrix} 
} \left(
P_{A_1\cup\{a\}}(a, u_2) 
-  P_{A_1\cup\{a'\}}(a', u_2)  \right) \tilde{P}_B (u_{2}, u_{3})
\ldots  \tilde{P}_B (u_{j}, b)
\]

\[= O\left(
\frac{1}{|V_N| N^{2/3}}
        \right)\]
      as long as $j=O(|V_N|^{2/3})$ (notice that $(1-O(1/N)) \leq 1$ in the expression above). Substituting now this bound and making use of Proposition \ref{Lm_ineq_Markov_chain} combined with  (\ref{TJ3}) we get from
(\ref{TJ15})         
         \begin{equation}\label{TJ17}          
\Sigma'= O\left(
\frac{1}{|V_N| N^{2/3}}
\right) + O\left({\frac{\sqrt{j}}{N}}\right)O\left(
\frac{(\log N)^2}{|V_N|^2}
\right)|V_N|= O\left(
\frac{(\log N)^2}{N^{7/3}}
\right).
\end{equation}

Consider now the last sum $\Sigma''$ in (\ref{TJ10}). We first derive a bound for the
following sum (taking into account  bounds (\ref{TJ3}) and Proposition
\ref{Lm_ineq_Markov_chain}):
            \begin{equation*}\label{TJ18}
\sum_{l=2}^j
              \sum\limits_{\small
    u_2, \ldots, u_{j} \in V_N:
	u_l=a' 
} 
P_{A_1\cup\{a\}}(a, u_2) 
P_{A_2\cup\{a, u_2\}}(u_2, u_3) 
\ldots   P_{A_j\cup\{a, u_2, \ldots, u_{j}\}}(u_{j}, b)
\end{equation*}

\[\leq 
  P_{A_1\cup\{a\}}(a, a')
    \max_{v}      \left(    \sum\limits_{\small
    u_{j} \in V_N
} 
P_{A_{j-1}\cup\{a, u_2, \ldots, u_{j-1}\}}(v,u_{j})
P_{A_j\cup\{a, u_2, \ldots, u_{j}\}}(u_{j}, b)\right)
\]
\[+
  \max_{v}    \left(       \sum\limits_{\small
    u_{2} \in V_N
} P_{A_{1}\cup\{a\}}(a,u_{2})
P_{A_{2}\cup\{a,u_2\}}(u_{2},v)\right)
\max_{u_2, \ldots, u_{j}}     
P_{A_j\cup\{a, u_2, \ldots, u_{j}\}}(a', b)
\]
\[+\sum_{l=3}^{j-1}
 \max_{v}    \left(       \sum\limits_{\small
    u_{2} \in V_N
} P_{A_{1}\cup\{a\}}(a,u_{2})
P_{A_{2}\cup\{a,u_2\}}(u_{2},v)\right)\]
\[\times
 \max_{v}      \left(    \sum\limits_{\small
    u_{j} \in V_N
} 
P_{A_{j-1}\cup\{a, u_2, \ldots, u_{j-1}\}}(v,u_{j})
P_{A_j\cup\{a, u_2, \ldots, u_{j}\}}(u_{j}, b)\right)
\]
\[= O\left(
\frac{\log N}{N^{3}}
\right) + O\left(k
\frac{(\log N)^2}{N^{4}}
\right)= O\left(
\frac{\log N}{N^{3}}
\right) .\]
The same bound holds for the remaining sum in $\Sigma''$ in (\ref{TJ10}),
which allows us to conclude that
\[\Sigma''= O\left(
\frac{\log N}{N^{3}}
\right). \]
The latter combined  with (\ref{TJ17}) and  (\ref{TJ10}) yields the statement of the Lemma.
\end{proof}

Now as a corollary of the last two lemmas we can extend the result of
Lemma \ref{LMarkov1} up to higher values of $k$, establishing therefore the 
 uniform  distribution approximation for $ \tilde{X}_{{\bf A},k} $.

\begin{Cor}\label{LMarkov3}
	Under assumptions of Lemma \ref{LMarkov2}, i.e.,
for all $T>0$ all
\[	
C_0 \log N <j\leq k \leq T|V_N|^{2/3} , 
\]
and any sequence ${\bf A}$ satisfying (\ref{seqA}) and (\ref{K}), 
one has 	
\begin{equation}\label{TJ19}
	 \mathbb{P}\left\{\tilde{X}_{{\bf A},j+1} = b \big | \tilde{X}_{{\bf A},1} = a\right\} =
	 \mathbb{P}\left\{
	 \tilde{X}_{{\bf A},j+1} = b  \right\}\left ( 1 + o(1) \right ) 
	 =\dfrac{1}{N^2} \left ( 1 + o(1) \right ) 
	\end{equation}
	for all $a\in V_N$, $b \in V_N \setminus A_{j}$ uniformly in ${\bf A}$.
	
	If, in addition, $\tilde{X}_{{\bf A}, 1}$ is uniformly distributed on $V_N \setminus A_1$, then for all 
	\begin{equation*}
		1 \leq j \leq k \leq T |V_N|^{2/3}, 
	\end{equation*}
	one has 	
	\begin{equation}\label{EqUniformX_tilda}
		\mathbb{P} \left \{ \tilde{X}_{{\bf A}, j+1} = b \right \} = \dfrac{1}{N^2} \left ( 1 + o (1) \right ) .
	\end{equation}
	for all $\; b \in V_N \setminus A_j$ uniformly in ${\bf A}$.
\end{Cor}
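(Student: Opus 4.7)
The plan is to combine the two preceding lemmas: Lemma \ref{LMarkov2} tells us that $f_{a,b} := \mathbb{P}\{\tilde{X}_{{\bf A},j+1} = b \mid \tilde{X}_{{\bf A},1} = a\}$ depends only weakly on the starting state, with $|f_{a,b} - f_{a',b}| = O(\log^2 N / N^{7/3}) = o(1/N^2)$ uniformly, while Lemma \ref{LMarkov1} pins the absolute value down to $1/N^2 + o(1/N^2)$ at the logarithmic time scale. To upgrade the latter to the macroscopic range $j \leq T|V_N|^{2/3}$, I would fix a constant $C' \in (C_0, C_1)$, set $k_0 := \lceil C' \log N \rceil$, and split the $(j+1)$-step walk into its first $j+1-k_0$ steps and its last $k_0$ steps. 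Conditionally on the past trajectory $(\tilde{X}_{{\bf A},1},\ldots,\tilde{X}_{{\bf A},j+1-k_0})$, the tail is itself a walk of the form given in Definition \ref{Xtil} with the shifted-and-augmented avoidance sequence $\bar{A}_i := A_{j-k_0+i} \cup \{\tilde{X}_{{\bf A},1},\ldots,\tilde{X}_{{\bf A},j+1-k_0}\}$, satisfying $|\bar{A}_{k_0}| \leq |A_j| + j = O(|V_N|^{2/3})$. Since $k_0$ falls in the admissible range for Lemma \ref{LMarkov1}, that lemma applied to the shifted walk gives
\[
\mathbb{P}\{\tilde{X}_{\bar{\bf A},k_0+1} = b \mid \tilde{X}_{\bar{\bf A},1} = c\} = \frac{1}{N^2} + O\left(\frac{\log^2 N}{N^{7/3}}\right)
\]
uniformly in $c$ and in the past. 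Taking expectation over the past yields $f_{a,b} = N^{-2}(1+o(1))$, which together with the averaging of Lemma \ref{LMarkov2} (giving $\mathbb{P}\{\tilde{X}_{{\bf A},j+1} = b\} = f_{a,b} + o(N^{-2})$) produces both equalities of (\ref{TJ19}), and hence (\ref{EqUniformX_tilda}) in the range $j > C_0 \log N$.

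For (\ref{EqUniformX_tilda}) at short times $j \leq C_0 \log N$, where Part 1 does not reach, I would argue by induction on $j$ starting from the uniform initial law. Writing
\[
\mathbb{P}\{\tilde{X}_{{\bf A},j+1} = b\} = \mathbb{E}\left[P_{B_j}(\tilde{X}_{{\bf A},j}, b)\,{\bf I}_{b \notin B_j}\right], \qquad B_j := A_j \cup \{\tilde{X}_{{\bf A},1},\ldots,\tilde{X}_{{\bf A},j}\},
\]
and using $|B_j| \leq |A_j| + j = O(|V_N|^{2/3})$ together with (\ref{Pr1a1}) and (\ref{ZNA}), one has $Z_{N,B_j} = 1 + O(N^{-1/3})$ and hence $P_{B_j}(u,b) = p(u,b)(1 + O(N^{-1/3}))$ uniformly. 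The induction hypothesis $\mathbb{P}\{\tilde{X}_{{\bf A},j} = c\} = N^{-2}(1+o(1))$ combined with Lemma \ref{Lm_expectation_n_vertices} (which gives $\sum_u p(u,b) = 1 + O(1/N)$) then yields $\mathbb{P}\{\tilde{X}_{{\bf A},j+1} = b\} = N^{-2}(1+o(1))$; the correction from dropping ${\bf I}_{b \notin B_j}$ is of lower order $O(j/N^4)$ by a union bound. The multiplicative error per step is $1+O(N^{-1/3})$, which stays $1+o(1)$ after $j \leq C_0 \log N$ iterations.

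The main obstacle is the shift-and-augment step of Part 1: one must verify that the conditional law of the tail $(\tilde{X}_{{\bf A},j+1-k_0+i})_{0 \leq i \leq k_0}$ given the first $j+1-k_0$ states is again a process of the type covered by Lemma \ref{LMarkov1}, i.e.\ it matches Definition \ref{Xtil} with some admissible sequence $\bar{\bf A}$. Once this decomposition is in place the estimate from Lemma \ref{LMarkov1} is uniform in the (random) past, so taking expectation is essentially free; the real care lies in checking admissibility of $\bar{\bf A}$ (monotonicity and the cardinality bound in (\ref{K})), and in handling the narrow band of $j$ just above $C_0 \log N$, where the splitting point $k_0$ must be chosen small enough that $k_0 \leq j$ yet be a fixed-in-$N$ constant times $\log N$ so as to respect the hypothesis $C_0 < C' < C_1$ of Lemma \ref{LMarkov1}.
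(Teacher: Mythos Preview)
Your approach is correct and differs from the paper's in a simple but genuine way: you apply Lemma~\ref{LMarkov1} to the \emph{last} $k_0\approx C'\log N$ steps of the walk, whereas the paper applies it to the \emph{first} $l\approx C_0\log N$ steps. The paper conditions at time $l$, uses Lemma~\ref{LMarkov1} to replace $\mathbb{P}\{\tilde X_{{\bf A},l}=u\mid \tilde X_{{\bf A},1}=a\}$ by its unconditional counterpart, then invokes Lemma~\ref{LMarkov2} on the remaining segment and finally appeals to a ``stationary distribution'' argument together with (\ref{PropEstsum2}) to identify the limit $1/N^2$. Your tail-splitting bypasses that last step entirely: once Lemma~\ref{LMarkov1} gives the tail probability uniformly equal to $N^{-2}(1+o(1))$, averaging over the head yields the result directly, and Lemma~\ref{LMarkov2} becomes essentially redundant for establishing (\ref{TJ19}). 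This is arguably cleaner, since the paper's ``stationary probability'' language is informal for a time-inhomogeneous self-avoiding walk.

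One point you should make explicit in Part~1: the tail estimate from Lemma~\ref{LMarkov1} applies only on the event $\{b\notin \bar A_{k_0}\}$, i.e.\ when $b$ is not among the first $j+1-k_0$ states; on the complement the tail probability is zero, not $N^{-2}$. Hence
\[
f_{a,b}=\Bigl(\tfrac{1}{N^2}+O\bigl(\tfrac{\log^2 N}{N^{7/3}}\bigr)\Bigr)\,\mathbb{P}\bigl\{b\notin\{\tilde X_{{\bf A},1},\dots,\tilde X_{{\bf A},j+1-k_0}\}\mid \tilde X_{{\bf A},1}=a\bigr\},
\]
and you need $\mathbb{P}\{b\in\text{past}\}=o(1)$. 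This follows from Lemma~\ref{LEp}: for $m\ge 3$ the bound is $O(\log N/N^2)$, so a union bound gives $O(j\log N/N^2)=O(\log N/N^{2/3})=o(1)$; the single-step contribution $p(a,b)=O(1/N)$ is harmless. You already handled the analogous point in Part~2 via the $O(j/N^4)$ correction, so this is just the same observation at the larger time scale.

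Your worry about the ``narrow band'' $C_0\log N<j\le k_0$ is easily resolved: in that range Lemma~\ref{LMarkov1} applies directly to the whole walk (taking $C$ just above $C_0$ and $C_1$ just above $C'$), so no splitting is needed there. The induction for Part~2 matches the paper's step-by-step product computation exactly.
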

\begin{proof}
  Let $l = \left [ C_0 \log N \right ] + 2$.
By  Lemma \ref{LMarkov1} we have  for any $a \in V_N$, and $b \in V_N\setminus A_l \setminus \{a\}$
	\begin{equation*}
          \mathbb{P}
\left\{\tilde{X}_{{\bf A},l+1} = b \big | \tilde{X}_{{\bf A},1} = a\right\} 
       = \dfrac{1}{N^2} + o \left ( \dfrac{1}{N^2} \right ).
	\end{equation*}
	Therefore for any $l<j\leq k $ we derive
	\begin{eqnarray}
          &&
             \mathbb{P}
             \left\{\tilde{X}_{{\bf A},j+1} = b \big | \tilde{X}_{{\bf A},1} = a\right\} 
  \nonumber \\ \nonumber
          && = \sum\limits_{u \in V_N \setminus A_{l-1} \setminus \{a, b\}}
             \mathbb{P}
\left\{\tilde{X}_{{\bf A},j+1} = b \big |  \tilde{X}_{{\bf A},l} = u, \tilde{X}_{{\bf A},1} = a\right\} 
             \mathbb{P}\left\{  \tilde{X}_{{\bf A},l} = u \big |  \tilde{X}_{{\bf A},1} = a\right\} \\ \nonumber
          && = \sum\limits_{u \in V_N \setminus A_{l-1} \setminus \{a, b\}}
             \mathbb{P}
             \left\{\tilde{X}_{{\bf A},j+1} = b \big |  \tilde{X}_{{\bf A},l} = u, \tilde{X}_{{\bf A},1} = a\right\}
            \mathbb{P}\left\{\tilde{X}_{{\bf A},l} = u\right\}
             \left ( 1 + o(1) \right )
          \\ \nonumber
          &&
             = \sum\limits_{u \in V_N \setminus A_{l-1} \setminus \{a, b\}} \mathbb{P}
\left\{\tilde{X}_{{\bf A},j+1} = b \big |  \tilde{X}_{{\bf A},l} = u \right\}
             \mathbb{P}\left\{\tilde{X}_{{\bf A},l} = u\right\}
             \left ( 1 + o(1) \right ) \\ \nonumber
          && + \sum\limits_{u \in V_N \setminus A_{l-1} \setminus \{a, b\}} \left  (
\mathbb{P}
\left\{\tilde{X}_{{\bf A}\cup \{a\},j+1} = b \big |  \tilde{X}_{{\bf A}\cup \{a\},l} = u \right\}-
\mathbb{P}
             \left\{\tilde{X}_{{\bf A},j+1} = b \big |  \tilde{X}_{{\bf A},l} = u \right\} \right )
            \\ \nonumber
          && \times  \dfrac{1}{N^2} \left (1 + o(1) \right ).
	\end{eqnarray}
        Employing now Lemma \ref{LMarkov2}, we continue as follows
\begin{equation*}\label{TJ20}
  \mathbb{P}\left\{\tilde{X}_{{\bf A},j+1} = b \big | \tilde{X}_{{\bf A},1} =
    a\right\} = \mathbb{P}
\left\{\tilde{X}_{{\bf A},j+1} = b \right\}
\left ( 1 + o(1) \right )
\end{equation*}
\[
+ \sum\limits_{u \in V_N \setminus A \setminus \{a, b\}} O \left (\dfrac{\log N}{N^{7/3}} \right ) \dfrac{1}{N^2} \left ( 1 + o(1) \right ) = \mathbb{P}\left\{\tilde{X}_{{\bf A},j+1} = b \right\}
\left ( 1 + o(1) \right ) ,
\]
from which we conclude that $\mathbb{P}\left\{\tilde{X}_{{\bf A},j+1} = b \right \}$ is the stationary probability for $b$. Equation $(\ref{PropEstsum2})$ implies that the stationary distribution has the form 
\begin{equation*}
	\mathbb{P}\left\{\tilde{X}_{{\bf A},j+1} = b \right \} = \dfrac{1}{N^2} \left ( 1 + O \left ( 1 / N^{1/3} \right ) \right ) .
\end{equation*}
It confirms the statement $(\ref{TJ19})$ of the corollary.

 		If, in addition, $\tilde{X}_{{\bf A}, 1}$ is uniformly distributed on $V_N \setminus A_1$, we have for any $a_1 \in V_N \setminus A_1$
		\begin{equation*}
			\mathbb{P}(\tilde{X}_{{\bf A}, 1} = a_1) = \dfrac{1}{N^2 - |A_1|} = \dfrac{1}{N^2} \left ( 1 + \dfrac{|A_1|}{N^2 - |A_1|} \right ) = \dfrac{1}{N^2} + o \left ( \dfrac{1}{N^2} \right ).
		\end{equation*}		
		For $j >  C_0 \log N$, $(\ref{EqUniformX_tilda})$ follows from $(\ref{TJ19})$.	
		
		If $j \leq  C_0 \log N$, then taking into account $(\ref{PropEstsum2})$, we derive
		\begin{eqnarray}
			&& \mathbb{P}(\tilde{X}_{{\bf A}, j+1} = b) = \sum\limits_{a_1, \ldots, a_{j} }\mathbb{P}(\tilde{X}_{{\bf A}, 1} = a_1) P_{A_{1} \cup \{a_1\}}(a_{1}, a_{2}) \nonumber \\ \nonumber
			&& \cdot \ldots \cdot  P_{A_{j-1}\cup \{a_1,...,a_{j-1}\}}(a_{j-1}, a_{j}) \cdot   P_{A_{j}\cup \{a_1,...,a_{j}\}}(a_{j}, b)   \\ \nonumber
			&& = \dfrac{1}{N^2} \left ( 1 + o(1) \right ) \left ( 1 + O(1 / N^{1/3}) \right )^{j} = \dfrac{1}{N^2} + o \left ( \dfrac{1}{N^2} \right ) ,
		\end{eqnarray}
		which confirms the statement $(\ref{EqUniformX_tilda})$ of the corollary.
	\end{proof}

Finally we get one more useful bound.

\begin{Lemma}\label{LEp}
For all $T>0$, all
	\begin{equation*}\label{k2}	
1<j\leq k \leq T|V_N|^{2/3} , 
	\end{equation*}
	and any sequence ${\bf A}$ satisfying (\ref{seqA}) and (\ref{K}),
one has 	
\begin{equation}\label{TJ36}
  \mathbb{P}\left\{\tilde{X}_{{\bf A},j} = b \big | \tilde{X}_{{\bf A},1} = a\right\}
\leq \max \left ( \dfrac{4 c^2 \log N}{N^2}, p(a, b) \right ) \left ( 1 + o(1) \right ) .
	\end{equation}
	for all $a,b\in V_N$ uniformly in ${\bf A}$.
        
	\begin{proof}
		If $j = 2$ then
		\begin{eqnarray}
                  && \mathbb{P}\left\{\tilde{X}_{{\bf A},2} = b \big | \tilde{X}_{{\bf A},1} = a\right\} \nonumber \\ \nonumber
                   &&  = p(a, b) \left ( 1 + \sum\limits_{u \in A_2} \dfrac{p(a, u)}{1 - \sum\limits_{u \in A_2} p(a, u)} \right )  \\ \nonumber
		&&  = p(a, b) \left ( 1 + o(1) \right ) .
		\end{eqnarray}
		By (\ref{TJ2}) and by Proposition \ref{Lm_ineq_Markov_chain} for any $A_{i-1} \subset A_i \subset A_{i+1}$ with $u \in V_N \setminus A_{i-1} $
under assumptions of the lemma
                we have
		\begin{equation*}
               \sum\limits_{w \in V_N \setminus A_i \setminus \{u\}} P_{A_i \cup \{u\}}(u, w) P_{A_{i+1}\cup \{u, w\}} (w, b) \leq  \dfrac{4 c^2 \log N}{N^2} \left ( 1 + o(1) \right ) 
             \end{equation*}
             uniformly in $a,b$ and sequence ${\bf A}$. Hence, for all $j\geq 2$
             \[
               \mathbb{P} \left\{\tilde{X}_{{\bf A},j} = b \big | \tilde{X}_{{\bf A},1}
                 = a\right\}
\leq  \dfrac{4 c^2 \log N}{N^2} \]
	and the statement (\ref{TJ36}) follows. 
	\end{proof}
\end{Lemma}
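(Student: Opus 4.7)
The plan is to split into the two cases $j=2$ and $j\geq 3$. The case $j=2$ will produce the bound $p(a,b)(1+o(1))$, while for $j\geq 3$ I will show a uniform bound of $\bigl(4c^2\log N/N^2\bigr)(1+o(1))$; taking the maximum then yields (\ref{TJ36}).

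For $j=2$, I simply expand the definition:
\[
\mathbb{P}\{\tilde{X}_{{\bf A},2}=b \,|\, \tilde{X}_{{\bf A},1}=a\}=P_{A_1\cup\{a\}}(a,b)=\frac{p(a,b)}{Z_{N,\,A_1\cup\{a\}}}.
\]
Under the hypothesis $|A_k|=O(|V_N|^{2/3})$, the bound (\ref{Pr1a1}) from Proposition \ref{Prop1} gives $\sum_{u\in A_1\cup\{a\}}p(a,u)=O(\sqrt{|A_1|}/N)=o(1)$, so together with Lemma \ref{Lm_expectation_n_vertices} the denominator $Z_{N,\,A_1\cup\{a\}}$ is $1+o(1)$. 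Hence the ratio equals $p(a,b)(1+o(1))$, which contributes the $p(a,b)$ branch of the maximum.

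For $j\geq 3$ I will peel off the last two steps. Writing the joint law of $\tilde{X}_{\bf A}$ explicitly (with $u_1=a$ and $u_j=b$) gives
\[
\mathbb{P}\{\tilde{X}_{{\bf A},j}=b \,|\, \tilde{X}_{{\bf A},1}=a\}=\sum_{u_2,\ldots,u_{j-1}}\prod_{i=1}^{j-1}P_{A_i\cup\{a,u_2,\ldots,u_i\}}(u_i,u_{i+1}).
\]
Fixing the prefix $(u_1,\ldots,u_{j-2})$ and bounding only the last two factors by the upper inequality in (\ref{TJ3}) yields
\[
\sum_{u_{j-1}} P_{A_{j-2}\cup\cdots}(u_{j-2},u_{j-1})\,P_{A_{j-1}\cup\cdots}(u_{j-1},b)\leq \bigl(1+O(N^{-1/3})\bigr)^2 \sum_{w}P(u_{j-2},w)P(w,b)=(1+o(1))\,P^2(u_{j-2},b),
\]
and by Proposition \ref{Lm_ineq_Markov_chain} the right-hand side is bounded by $\bigl(4c^2\log N/N^2\bigr)(1+o(1))$ uniformly in $u_{j-2}$. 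Since this bound is independent of the prefix, summing the remaining prefix weights (which total at most $1$) gives the desired estimate.

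The main subtlety is that $\tilde{X}_{\bf A}$ is not Markovian in the usual sense: both its transition law and its forbidden set depend on the full history, so one cannot directly invoke an estimate like $P^{j-1}(a,b)\leq P^2(a,a)$ along the whole walk. The trick is that the two-step bound from Proposition \ref{Lm_ineq_Markov_chain} does not depend on the prefix trajectory, so it can be factored out of the path sum after peeling only the last two steps. Equally important, the multiplicative correction $1+O(N^{-1/3})$ from replacing $P_A$ by $P$ via (\ref{TJ3}) is applied twice rather than $j-1$ times; applying it at every step would ruin the estimate, since $(1+O(N^{-1/3}))^{TN^{4/3}}$ blows up.
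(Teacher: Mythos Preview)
Your argument is correct and follows essentially the same route as the paper: handle $j=2$ directly from the definition to get $p(a,b)(1+o(1))$, and for $j\ge 3$ bound the last two transitions by $P^2(u_{j-2},b)\le P^2(u_{j-2},u_{j-2})=\dfrac{4c^2\log N}{N^2}(1+o(1))$ via (\ref{TJ3}) and Proposition~\ref{Lm_ineq_Markov_chain}, then sum the prefix. Your write-up is in fact slightly cleaner than the paper's, which concludes the two-step bound ``for all $j\ge 2$'' even though that estimate properly applies only from $j\ge 3$; your explicit split and your remark about applying the $(1+O(N^{-1/3}))$ correction only twice rather than $j-1$ times make the dependence on $j$ transparent.
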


 \subsection{Proof of Theorem \ref{ThIndep}.}\label{SectionProof29}
 First, we need the following auxiliary lemma. 
 \begin{Lemma}\label{Cor_b_not_in_I}
 	Let $k = |V_N|^{2/3} s, \; s \in [0, T]$. Then uniformly for all $a \neq b$ and $1 \leq i, j, l \leq k\leq T|V_N|^{2/3}$
 	\begin{equation*}
 		\mathbb{P} (b \notin I_{l} \big| v_i = a, |v_i,v_j|_{T_k} < \infty, |I_k| \leq |V_N|^{2/3} C ) = 1 + O \left ( \dfrac{1}{N^{1/3}} \right ),
 	\end{equation*}
 	\begin{equation*}
 		\mathbb{P} (b \notin I_{l} \big| v_i = a, |v_i,v_j|_{T_k} = \infty, |I_k| \leq |V_N|^{2/3} C ) = 1 + O \left ( \dfrac{1}{N^{1/3}} \right ),
 	\end{equation*}
 	for all large $N$.
 	
 	\begin{proof}
 		We will prove the first equality. The second can be proved in exactly the same way.
 		\begin{eqnarray}
 			&& \mathbb{P} (b \notin I_{l} \big| v_i = a, |v_i,v_j|_{T_k} < \infty, |I_k| \leq |V_N|^{2/3} C  ) \nonumber \\ \nonumber
 			&& \geq \mathbb{P} (b \notin I_k \big| v_i = a, |v_i,v_j|_{T_k} < \infty, |I_k| \leq |V_N|^{2/3} C  ) \\ \nonumber
 			&& = \mathbb{P} \left ( b \notin \mathcal{R}_k, b \notin \mathcal{N} (v_1), \ldots, b \notin \mathcal{N} (v_k) \big| v_i = a, |v_i,v_j|_{T_k} < \infty, |I_k| \leq |V_N|^{2/3} C  \right ) \\ \nonumber
 			&& \geq 1 - \sum\limits_{l = 1}^{k} \mathbb{P} (b \in \mathcal{N} (v_l) \big| v_i = a, |v_i,v_j|_{T_k} < \infty, |I_k| \leq |V_N|^{2/3} C ) \\ \nonumber
 			&& - \mathbb{P} (b \in \mathcal{R}_k \big| v_i = a, |v_i,v_j|_{T_k} < \infty, |I_k| \leq |V_N|^{2/3} C ) \\ \nonumber
 			&& \geq 1 - \max\limits_{A \subseteq V_N : |A| = k + |V_N|^{2/3} C} \sum\limits_{u \in A} p(u, b) - \dfrac{k}{N^2 - k} = 1 + O \left ( \dfrac{1}{N^{1/3}} \right ),
 		\end{eqnarray}
 		where the last equality holds by $(\ref{Pr1a1})$.
 		
 	\end{proof}
 \end{Lemma}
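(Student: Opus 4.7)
The plan is to reduce the claim to an upper bound on $\mathbb{P}(b \in I_k \mid \ldots)$ of order $O(N^{-1/3})$: since $I_l \subseteq I_k$ whenever $l \leq k$, one has $\mathbb{P}(b \notin I_l \mid \ldots) \geq 1 - \mathbb{P}(b \in I_k \mid \ldots)$, so any such bound propagates directly to the form $1 + O(N^{-1/3})$. The natural target is then to bound the probability that $b$ has appeared at all in the breadth-first exploration up to step $k$.

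Next I would use the decomposition (\ref{Tm5}), $I_k = \mathcal{R}_k \cup \bigcup_{s=1}^{k} \mathcal{N}(v_s)$, combined with a union bound:
\[
\mathbb{P}(b \in I_k \mid \ldots) \leq \mathbb{P}(b \in \mathcal{R}_k \mid \ldots) + \sum_{s=1}^{k} \mathbb{P}(b \in \mathcal{N}(v_s) \mid \ldots).
\]
For the root contribution, the uniform-restart rule of the BFW implies $\mathbb{P}(b \in \mathcal{R}_k \mid \ldots) \leq k/(N^2 - k) = O(N^{-2/3})$, since at each restart the new seed is drawn uniformly from the unexplored pool, a rule oblivious to the identity of $b$. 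For the neighbor contribution, the event $b \in \mathcal{N}(v_s)$ is an edge event with probability $p(v_s, b)$ given $v_s$, so the sum is controlled, under the conditioning $|I_k| \leq C|V_N|^{2/3}$, by $\max_{|A| \leq C|V_N|^{2/3} + k} \sum_{u \in A} p(u, b)$. Proposition \ref{Prop1}, inequality (\ref{Pr1a1}), bounds this by $4c\sqrt{|A|}/N = O(N^{1/3}/N) = O(N^{-1/3})$.

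The main point requiring care is that the conditioning on $v_i = a$, on the tree-distance event (finite or infinite), and on the size bound $|I_k| \leq C|V_N|^{2/3}$ must be compatible with both estimates above. The root bound is immediate because the restart rule is agnostic to $b$. For the neighbor bound, the worst-case supremum over sets $A$ of size $\leq C|V_N|^{2/3} + k$ is a deterministic dominator valid for any BFW history satisfying the size constraint, so it absorbs all conditioning of this form. The two cases in the statement are handled by the same computation, since only the size of $I_k$ enters quantitatively. I expect no substantial obstacle beyond confirming that conditioning on the tree-distance event does not bias the joint distribution of $(v_s)_{s \leq k}$ in a way that defeats the worst-case bound --- but since we only take a deterministic supremum over subsets of $V_N$ of controlled size, this step is automatic.
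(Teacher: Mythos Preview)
Your proposal is correct and follows essentially the same route as the paper's proof: reduce to $I_k$ via monotonicity, decompose $I_k$ using (\ref{Tm5}), apply a union bound, control the root term by $k/(N^2-k)$ and the neighbor term by the worst-case sum $\sum_{u\in A}p(u,b)$ over sets of size $O(|V_N|^{2/3})$, and finish with (\ref{Pr1a1}). One small slip: $\sqrt{|A|}=O(N^{2/3})$, not $O(N^{1/3})$, so the displayed computation should read $4c\sqrt{|A|}/N=O(N^{2/3}/N)=O(N^{-1/3})$; the conclusion is unaffected.
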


 \subsubsection{Proof of Lemma \ref{LT1}}
 \begin{proof}
 		Consider 
 	\begin{equation}\label{LT1eq1}
 		\mathbb{P} \left \{ v_j = b \big| v_i = a,    |v_i,v_j|_{T_k}=\infty ,
 		|I_k| \leq |V_N|^{2/3} C 
 		\right \}.
 	\end{equation}
 	Firstly, we need the following definition.
 	\begin{Def}\label{Def_parent}
 		We call $u$ a \textbf{parent} of $v$ if $\left\|(u \rightarrow v)_{T_k}\right\| = 1$. In what follows, we will always denote the parent of $v_j$ by $v_{\hat{j}}$. If $v_j$ is a root of a component then we define $\hat{j} : = j-1$.
 	\end{Def}
 	
 	If the probability in  $(\ref{LT1eq1})$ is further conditioned on the event $ \left \{b \in I_{\hat{j} - 1} \right \}$, it becomes $0$. As a result, making use of Lemma \ref{Cor_b_not_in_I}, we have
 	\begin{eqnarray}\label{LT1eq2}
 		&&\mathbb{P} \left \{ v_j = b \big| v_i = a, |v_i,v_j|_{T_k}=\infty ,
 		|I_k| \leq |V_N|^{2/3} C
 		\right \}  \\ \nonumber
 		&& = \mathbb{P} \left \{  v_j = b \big| v_i = a, |v_i,v_j|_{T_k}=\infty, |I_k| \leq |V_N|^{2/3} C, b \notin I_{\hat{j} - 1} \right \} \\ \nonumber
 		&& \times \mathbb{P} \left \{  b \notin I_{\hat{j} - 1} \big| v_i = a, |v_i,v_j|_{T_k}=\infty, |I_k| \leq |V_N|^{2/3} C \right \} \\ \nonumber
 		&& =  \mathbb{P} \left \{  v_j = b \big| v_i = a, |v_i,v_j|_{T_k}=\infty, |I_k| \leq |V_N|^{2/3} C, b \notin I_{\hat{j} - 1} \right \} \left ( 1 + O\left ( 1/N^{1/3} \right ) \right ) .
 	\end{eqnarray}
 	
 	Since $|v_i,v_j|_{T_k}=\infty$, there exists $v_l = R(v_j)$ (the root of the tree, containing $v_j$), for some $i < l \leq j$. Denote $$L : = \left\|(v_l \rightarrow v_j)_{T_k}\right\| \geq 0,$$ 
 	where $L=0$ means that $v_l = v_j$, and consider a process ${\bf\tilde{X}}_{\bf A}$ which was defined in Section \ref{SectionRWT} for any suitable
 	 ${\bf A}$
 	 with $A_1 = I_{l-1}$.  	
 	 	By the property (\ref{TJ1}) we can estimate
 	
 	\[\min_{{\bf A}}
 	\mathbb{P}\{\tilde{X}_{{\bf A}, L+1}=b \big| b \notin A_L\}\]
 	\begin{equation}\label{LT1eq3}
 		\leq
 		\mathbb{P} \left \{  v_j = b \big| v_i = a, |v_i,v_j|_{T_k}=\infty, |I_k| \leq |V_N|^{2/3} C, b \notin I_{\hat{j} - 1} \right \}
 	\end{equation}
 	\[
 	\leq \max_{{\bf A}}
 	\mathbb{P}\{\tilde{X}_{{\bf A}, L+1}=b \big| b \notin A_L \},
 	\]
 	where the minimum and the maximum are taken over
 	all sequences $${\bf A}=(A_1, \ldots, A_{L+1})$$ such 
 	that
 	\[
 	I_{l-1} = A_1 \subseteq A_2\subseteq \ldots \subseteq A_{L+1}\subset V_N, \ \left| A_{L+1} \right|\leq |V_N|^{2/3} C 
 	\]
 	(see (\ref{seqA1})).
 	
 	Since $v_l$ is uniformly distributed on $V_N \setminus I_{l-1}$ we can use $(\ref{EqUniformX_tilda})$ from Corollary \ref{LMarkov3} to get that both lower and upper bound in $(\ref{LT1eq3})$ have the same limit, resulting in 
 	
 	\begin{eqnarray}
 		&& \mathbb{P} \left \{  v_j = b \big| v_i = a, |v_i,v_j|_{T_k}=\infty, |I_k| \leq |V_N|^{2/3} C, b \notin I_{\hat{j} - 1} \right \} \nonumber \\ \nonumber
 		&& = \dfrac{1}{N^2} + o \left ( \dfrac{1}{N^2} \right ),
 	\end{eqnarray}
 	which together with $(\ref{LT1eq2})$ proves Lemma \ref{LT1}.
 \end{proof}
  
 \subsubsection{Proof of Lemma \ref{LT3}.}

  Fix $C_1>0$ and
  $L\leq C_1\log N$
   arbitrarily. 
   
Recall that by our construction vertices $v_i$ when belonging to  the same component, 
form consecutive generations. In other words, the tree-distance from $v_i$ to the root of component is non-decreasing in $i$. Hence, 
if  $v_i,v_j$ are in the same tree-component of $T_k$ with $i<j$, then
all  $j-i+1$ vertices $v_i, v_{i+1}, \ldots, v_j$, 
must belong to the generations which include vertex $v_i$, vertex $v_j$, and all the generations in  between. The number of these generations  is  $ |v_i,v_j|_{T_k} + 1$, if
\[|v_i,v_j|_{T_k}= \|(v_i \rightarrow v_j)_{T_k}\|;\]
otherwise, if
\[  |v_i,v_j|_{T_k}= \|(v_r \rightarrow v_j)_{T_k}\| + \|(v_r \rightarrow v_i)_{T_k}\| \]
for some $ v_r$, this number 
 is
\begin{equation}\label{TJ33}
  \|(v_r \rightarrow v_j)_{T_k}\| - \|(v_r \rightarrow v_i)_{T_k}\| + 1.
  \end{equation} 
 Hence, there are at most $ |v_i,v_j|_{T_k} + 1=L$ generations of a tree (to which both $v_i,v_j$ belong to) which carry at least $j-i+1$ vertices.
This yields that there is a generation with at least
\[\frac{j-i+1}{L}\]
vertices. This gives us the following bound
\begin{equation}\label{TJ34}
  \mathbb{P} \left \{  |v_i,v_j|_{T_k}=L \big| v_i = a,  \ |I_k|\leq C|V_N|^{2/3}\right \}
\end{equation} 
\[
  \leq \mathbb{P} \left \{
\mbox{ there is a generation in } T_k  \mbox{ with at least } \frac{j-i+1}{L}
\mbox{ vertices }
\right \} .
\]

Note that the  tree-component  in $T_k$
is stochastically dominated by a tree (i.e., the number of vertices in a generation of one tree is stochastically dominated by the number of vertices in a corresponding generation of another tree) generated by a  branching process  with offspring distribution 
\begin{equation}\label{CBr}
\sum_r Bin\left(N_r, p_r \right),
\end{equation}
where $Bin\left(N_r, p_r \right), r\geq 1, $ denote  independent binomial random variables.
Let  $\zeta _0=1$, and let $ \zeta _k$ denote the number of offspring in $k$-th generations  of the latter process.
This allows us to derive a bound for the probability in (\ref{TJ34}):
\[\mathbb{P} \left \{  |v_i,v_j|_{T_k}=L \big| v_i = a,  \ |I_k|\leq C|V_N|^{2/3}\right \}\]
\begin{equation}\label{TJ35}
  \leq \mathbb{P} \left \{
\max \limits_{k \geq 1} \zeta_k  \geq  \frac{j-i+1}{L}
\right \} .
\end{equation}

Observe  that branching process  $\{\zeta_k\}_{k\geq 0}$ with offspring distribution
(\ref{CBr}) is critical by our assumption. It is known
\cite{Lindvall}
that under mild conditions (see for the details \cite{Lindvall}), which are fulfilled in our case,  a critical branching processes has the following property:
	\begin{equation}\label{TJ31}
		n \cdot \mathbb{P} \left ( \max\limits_{k \geq 1} \zeta_k > n \right ) \rightarrow 1 \text{ as } n\rightarrow \infty .
	\end{equation} 
Now taking into account
 assumption that 
 $$j-i \geq |V_N|^{1/6}
 $$ for any $L\leq C_1\log N$
we derive from (\ref{TJ35}) with a help of  (\ref{TJ31}):
\[\mathbb{P} \left \{  |v_i,v_j|_{T_k}=L \big| v_i = a,  \ |I_k|\leq C|V_N|^{2/3}\right \}\]
\begin{equation*}\label{TJ322}
  \leq \mathbb{P} \left \{
\max \limits_{k \geq 1} \zeta_k  \geq  \frac{j-i+1}{L}
\right \} \leq 2   \dfrac{C_1\log N}{  N^{1/3} }
\end{equation*}
for all large $N$, which proves Lemma \ref{LT3}.
\hfill$\Box$

\subsubsection{Proof of Lemma \ref{LT2}.}
Fix $C_1>2C_0$ where $C_0$ is a constant as in 
Corollary \ref{LMarkov3}.

Let us split the condition event in
\[
\mathbb{P} \left \{ v_j = b \big| v_i = a, |v_i,v_j|_{T_k}=L, \ |I_k|\leq C|V_N|^{2/3} \right \}
\]
  into two following cases: 
  when
	\begin{equation}\label{Case1a}
          |v_i,v_j|_{T_k}=\|(v_i \rightarrow v_j)_{T_k}\|=L,
	\end{equation}
	i.e., there is a directed path of length $L$ from $v_i$ to $v_j$,
	and when for some $v_r$ with $r<i<j$
	\begin{equation}\label{Case1b}
	|v_i,v_j|_{T_k}=\|(v_r \rightarrow v_i)_{T_k}\|+
	\|(v_r \rightarrow v_j)_{T_k}\|=L,
	\end{equation}
	i.e., the shortest distance between $v_i$ and $v_j$ is through vertex $v_r$. 
	
	By the property (\ref{TJ1})
	we have in the case (\ref{Case1a})
	\[\min_{{\bf A} : A_L \not\owns b}
	\mathbb{P}\{\tilde{X}_{{\bf A}, L+1}=b \big|\tilde{X}_{{\bf A},1}=a\}\]
	\begin{equation}\label{TJ26}
	\leq
	\mathbb{P}\{v_j=b \big| v_i=a, \ |v_i, v_j|_{T_k}=\|(v_i \rightarrow v_j)_{T_k}\|=L, \  |I_k|=K\}
	\end{equation}
	\[
	\leq \max_{{\bf A}: A_L \not\owns b}
	\mathbb{P}\{\tilde{X}_{{\bf A}, L+1}=b \big|\tilde{X}_{{\bf A},1}=a\},
	\]
	where the minimum and the maximum are taken over
	all sequences $${\bf A}=(A_1, \ldots, A_{L+1})$$ such 
	that
	\[
	A_1 \subseteq A_2\subseteq \ldots \subseteq A_{L+1} \subset V_N, \ \left| A_{L+1} \right|\leq K 
	\]
	(see (\ref{seqA1})).

        By the assumption $|I_k|=K\leq C|V_N|^{2/3}$,
	and $L> C_1 \log N >2C_0 \log N$, as we have set. Hence, we are in the conditions of Lemma \ref{LEp}, which together with  (\ref{TJ26}) gives us the upper bound
	\[
	\mathbb{P}\{v_j=b \big| v_i=a, \ |v_i, v_j|_{T_k}=\|(v_i \rightarrow v_j)_{T_k}\|=L, \  |I_k|=K\}
	\]
		\begin{equation}\label{TJ25*}
                  \leq \max \left( \frac{4c^2 \log N}{N^2}, p(a,b)
\right)
                    (1+o(1)).
	\end{equation}
Also under extra assumption (\ref{cMk}) 
we can apply 
	Corollary \ref{LMarkov3} to prove convergence of the upper and the lower bounds in (\ref{TJ26}) to the same limit, resulting in 
	\[
	\mathbb{P}\{v_j=b \big| v_i=a, \ |v_i, v_j|_{T_k}=\|(v_i \rightarrow v_j)_{T_k}\|=L, \  |I_k|=K\}
	\]
		\begin{equation}\label{TJ25}
	=\frac{1}{|V_N|}(1+o(1)).
	\end{equation}

	Next under condition (\ref{Case1b}) we have by (\ref{TJ33}) 
 $$
	\|(v_r \rightarrow v_j)_{T_k}\|\geq L/2>C_0 \log N.$$
	Loosely speaking along this  long path there is a mixing on a graph.
For any $u\in V_N$ we have by construction (\ref{TJ1}) the following bounds 
	\[
	\min_{{\bf A} : A_{L_1} \not\owns b}
	\mathbb{P}\{\tilde{X}_{{\bf A}, L_1+1}=b \big|\tilde{X}_{{\bf A},1}=u\},
	\]
	\begin{equation*}\label{TJ24}
	\begin{array}{ll}
	\mathbb{P}\{v_j=b \big| & v_i=a, \ |v_i, v_j|_{T_k}=\|(v_r \rightarrow v_j)_{T_k}\|
	+\|(v_r \rightarrow v_i)_{T_k}\|
	=L, \\ \\
	&v_r=u, \ \|(v_r \rightarrow v_j)_{T_k}\|= L_1, \ 
	 |I_k|=K\}
	\end{array}
		\end{equation*}
		\[
		\leq \max_{{\bf A} : A_{L_1} \not\owns b}
		\mathbb{P}\{\tilde{X}_{{\bf A}, L_1+1}=b \big|\tilde{X}_{{\bf A},1}=u\},
		\]
		where the minimum and the maximum are taken over
		all sequences $${\bf A}=(A_1, \ldots, A_{L_1+1})$$ such 
		that
		\[
		A_1 \subseteq A_2\subseteq \ldots \subseteq A_{L_1+1}\subset V_N, \ \left| A_{L_1+1} \right|\leq K . 
		\]
		(see (\ref{seqA1})).
		Here again we are in the conditions of Lemma  \ref{LEp},
                and under extra condition (\ref{cMk}) also
                in the conditions of Corollary \ref{LMarkov3}. Hence, application of the latter mentioned results yield again 
                asymptotically uniform distribution of $v_j$ as in 
		(\ref{TJ25}) under condition (\ref{cMk}), and otherwise, a bound as in 
(\ref{TJ25*}). Hence, statements (\ref{TM30}) and (\ref{TM30*}) follow, which finishes the proof.
                 \hfill$\Box$

		\bigskip
		
		Finally, the statement of Theorem \ref{ThIndep} follows
		by (\ref{TM28n}) after application of the results of Lemma \ref{LT1}, Lemma \ref{LT2},
		and Lemma \ref{LT3} to the respective terms on the right in (\ref{TM28n}).
		\hfill$\Box$
  
 \subsection{Asymptotic properties of the drift $\mathcal{D}(k)$.}\label{SectionAsymPropDrift}
 
 \subsubsection{$\mathcal{D}_1(k)$.}
 As defined in (\ref{TSe10}) consider 
 \[
 \mathcal{D}_1(k+1) =
 \sum\limits_{j=2}^{k}  \sum_{r=1}^N p_r\sum_{i=1}^{j-1} \mathbb{E} \left\{ 
 \sum_{s=1}^N\mathbb{E} \left\{ \left|{N_r}(v_j) \cap \mathcal{N}_s(v_i)  \right| \big| \mathcal{F}_{j-1}  \right\} \big| \mathcal{F}_{j}^z  \right\}
\]
\[
 =
\sum\limits_{j=2}^{k}  \sum_{r=1}^N p_r\sum_{i=1}^{j-1} \mathbb{E} \left\{ \left|{N_r}(v_j) \cap \mathcal{N}(v_i)  \right|  \big| \mathcal{F}_{j}^z  \right\}
\]
\begin{equation}\label{YN}
 =\sum\limits_{j=2}^{k} \sum_{i=1}^{j-1} \mathbb{E} \left\{ 
 \left| \mathcal{N}(v_j) \cap \mathcal{N}(v_i)  \right| \big| \mathcal{F}_{j}^z  \right\}.
 \end{equation}
 First we study $\mathbb{E} \mathcal{D}_1(k+1) $. 
 
 \begin{Lemma}\label{ThExpectationY_N}
 	Let $T>0$ be fixed arbitrarily and let $k = s |V_N|^{2/3} = s N^{4/3}$, $s\in [0,T]$. Then
 	\[
 	\mathbb{E} \mathcal{D}_1(k+1) = \dfrac{k^2}{2|V_N| } \left ( 1 + o(1) \right )  +o\left(\dfrac{1}{|V_N|^{1/2}} \right)\]
 	\[= \dfrac{s^2}{2}|V_N|^{1/3 } \left ( 1 + o(1) \right )  +o\left(\dfrac{1}{|V_N|^{1/2}} \right),
 	\]
 	uniformly in $s\in [0,T]$.
 \end{Lemma}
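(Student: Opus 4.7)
The plan is to evaluate $\mathbb{E}\mathcal{D}_1(k+1)=\sum_{j=2}^{k}\sum_{i=1}^{j-1}\mathbb{E}|\mathcal{N}(v_j)\cap\mathcal{N}(v_i)|$ by estimating each pair expectation and splitting the double sum according to the temporal gap $j-i$. For a fixed pair I would first expand
$$\mathbb{E}|\mathcal{N}(v_j)\cap\mathcal{N}(v_i)|=\sum_{\substack{a,b,u\in V_N\\\text{distinct}}}\mathbb{E}\bigl[\mathbf{1}\{v_i=a,v_j=b\}\,\eta_{au}\eta_{bu}\bigr],$$
and then approximate the right-hand side by $\sum_{a,b,u}\mathbb{P}(v_i=a,v_j=b)\,p(a,u)p(b,u)$, with the target of replacing the joint law of $(v_i,v_j)$ by the product uniform measure via Theorem \ref{ThIndep}. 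Translation invariance of the edge probabilities gives that $v_i$ is exactly uniform on $V_N$ for every $i\geq 1$, so the only non-trivial input is the conditional law of $v_j$ given $v_i$.

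The main contribution comes from the \emph{good} regime $\{j-i\geq|V_N|^{1/6},\ 1\leq i<j\leq k\}$. For such pairs Theorem \ref{ThIndep} yields $\mathbb{P}(v_j=b\mid v_i=a)=|V_N|^{-1}(1+o(1))$, so $\mathbb{P}(v_i=a,v_j=b)=|V_N|^{-2}(1+o(1))$ uniformly in $a\neq b$. Combining this with the translation-invariance identity
$$\sum_{u\in V_N}\!\Bigl[\Bigl(\sum_{a\neq u}p(a,u)\Bigr)^{\!2}-\sum_{a\neq u}p^2(a,u)\Bigr]=|V_N|\bigl(Z_N^2-O(\log N/N^2)\bigr)=|V_N|(1+o(1)),$$
which follows from (\ref{ZN}) and (\ref{Pr1a2}), gives $\mathbb{E}|\mathcal{N}(v_j)\cap\mathcal{N}(v_i)|=|V_N|^{-1}(1+o(1))$ for each good pair. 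Since the number of good pairs is $\binom{k}{2}(1+o(1))=\tfrac{k^2}{2}(1+o(1))$ whenever $k\gg|V_N|^{1/6}$, their total contribution is $\tfrac{k^2}{2|V_N|}(1+o(1))$, producing the leading term of the claim.

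For the \emph{bad} regime $j-i<|V_N|^{1/6}$, there are only $O(k|V_N|^{1/6})$ pairs. I would use the weaker bound from Theorem \ref{ThIndep}, $\mathbb{P}(v_j=b\mid v_i=a)\leq\max(4c^2\log N/N^2,p(a,b))(1+o(1))$, together with the pointwise estimate (\ref{tt1}) on $\sum_{u\neq a,b}p(a,u)p(b,u)$ and the sum bound (\ref{Pr1a1}), to show that each such pair contributes $O(\log N/|V_N|)$. The total bad contribution is therefore $O(k\log N/|V_N|^{5/6})$, which for $k\leq T|V_N|^{2/3}$ is much smaller than the leading $\tfrac{k^2}{2|V_N|}$ when $k$ is of order $|V_N|^{2/3}$, and is absorbed into the additive $o(|V_N|^{-1/2})$ error in the regime where $k$ is so small that the main term itself drops below $|V_N|^{-1/2}$.

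The main obstacle is justifying the factorization $\mathbb{E}[\mathbf{1}\{v_i=a,v_j=b\}\eta_{au}\eta_{bu}]\approx\mathbb{P}(v_i=a,v_j=b)p(a,u)p(b,u)$, because the event $\{v_i=a,v_j=b\}$ is $\mathcal{F}_{j-1}$-measurable and correlates non-trivially with edges incident to $a,b$. In particular, if $u\in U_{j-1}$ then the very fact that $u$ remained unexplored together with $v_i=a$ forces $\eta_{au}=0$, so the naive product formula would over-count. The remedy is to observe that, by Corollary \ref{TC1}, only $|I_{j-1}|=O(|V_N|^{2/3})$ vertices are involved in any edge examined by the BFW up to step $j-1$; separating $u\in I_{j-1}$ from $u\in U_{j-1}$ and using (\ref{Pr1a1}) to bound the contribution from the small set $I_{j-1}$, the resulting multiplicative correction on the pair expectation is $1+o(1)$, which is enough to deliver the claimed asymptotics.
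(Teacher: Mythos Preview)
Your strategy coincides with the paper's: both split the double sum by the gap $j-i$, apply Theorem~\ref{ThIndep} to well-separated pairs to extract the main term $k^2/(2|V_N|)$, and use the crude bound $\mathbb{E}|\mathcal{N}(v_j)\cap\mathcal{N}(v_i)|\le 5c^2\log N/N^2$ on close pairs. The paper uses the adaptive cutoff $M(k)=\max\{\sqrt{k},|V_N|^{1/6}\}$ rather than your fixed $|V_N|^{1/6}$, but either choice controls the error.

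Your final paragraph, however, is confused. You correctly note that on $\{v_i=a\}$ every $u\in U_{j-1}$ satisfies $\eta_{au}=0$, since $u\in U_{j-1}\subseteq U_{i-1}$ and all neighbours of $v_i$ lying in $U_{i-1}$ are absorbed into $I_i$. But this means the \emph{entire} sum $\sum_u\mathbb{E}[\mathbf{1}\{v_i=a,v_j=b\}\eta_{au}\eta_{bu}]$ is supported on $u\in I_{j-1}$; it is not a small correction to the naive $\mathbb{P}(v_i=a,v_j=b)\sum_u p(a,u)p(b,u)$, it is the whole thing. Your proposed remedy---bound the contribution from the small set $I_{j-1}$ via (\ref{Pr1a1})---therefore points the wrong way: what would have to be shown is that the restricted sum over $u\in I_{j-1}$ \emph{recovers} the full sum up to a $1+o(1)$ factor, equivalently that the over-count $\sum_{u\in U_{j-1}}p(a,u)p(b,u)$ is $o(1/N^2)$. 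Neither (\ref{Pr1a1}) nor (\ref{tt1}) delivers this, because the unrestricted sum $\sum_u p(a,u)p(b,u)$ can itself be of order $\log N/N^2$, so the part over $U_{j-1}$ is a priori comparable to the target $1/N^2$. The paper's own proof simply asserts the factorization at~(\ref{TJ43}) as an identity and does not address the dependence you raise; a genuine repair would require an extension of Theorem~\ref{ThIndep} to approximate uniformity of $v_j$ conditional on the pair $(v_i,\mathcal{N}(v_i))$, not just on $v_i$ alone.
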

 
 \begin{proof}
   From (\ref{YN}) we obtain first
   \begin{eqnarray}\label{YN1}
 \mathbb{E} \mathcal{D}_1(k+1)  = \sum\limits_{j=2}^{k} \sum_{i=1}^{j-1} \mathbb{E} 
 \left| \mathcal{N}(v_j) \cap \mathcal{N}(v_i)  \right| .
 \end{eqnarray}
   Observe that
 	\[
        \mathbb{E} \left \{ \left | \mathcal{N}(v_j)  \cap \mathcal{N}(v_i) \right | \right \} \leq \max \limits_{u\neq v, u,v \in V_N}
             \mathbb{E} \left \{ \left | \mathcal{N}(v) \cap \mathcal{N}(u) \right | \right \} \]\[
	 = \max \limits_{u\neq v, u,v \in V_N}\sum_{x \in  V_N}p(u,x)p(x,v).
       	\]
        Therefore applying here bound (\ref{tt1}) we get
        	\begin{equation}\label{EstExpectationNv_iNv_j}
          \mathbb{E} \left \{ \left | \mathcal{N}(v_j) \cap \mathcal{N}(v_i) \right | \right \} \leq  \dfrac{5 c^2 \log N}{N^2}
 	\end{equation}
      uniformly in $1\leq i<j<k$.  
 	For any   $M(k) <k-2 $ let us split the sum in (\ref{YN1}) as follows
 	\begin{equation}\label{Expectation_Y_N}
 	 \mathbb{E}\mathcal{D}_1(k+1) = \sum\limits_{j=2}^{k} \sum_{i=1}^{j-1}  
 	\mathbb{E} \left\{ \left|{\mathcal{N}}(v_j) \cap \mathcal{N}(v_i)  \right| \right\}
 	\end{equation}
 	 \[= \sum\limits_{j=1}^{M(k)+1} \sum_{i=1}^{j - 1}  
 	\mathbb{E} \left\{ \left|\mathcal{N}(v_j) \cap \mathcal{N}(v_i)  \right| \right\}+ \sum\limits_{j=M(k)+2}^{k} \sum_{i=j-M(k)}^{j - 1}  
 	\mathbb{E} \left\{ \left|\mathcal{N}(v_j) \cap \mathcal{N}(v_i)  \right| \right\}
 	\]
 	\[ +\sum\limits_{j=M(k)+2}^{k} \sum_{i=1}^{j-M(k) - 1}  
 	\mathbb{E} \left\{ \left|\mathcal{N}(v_j) \cap \mathcal{N}(v_i)  \right| \right\}.
 	\]
        The first two sums on the right we can bound with a help of
        $(\ref{EstExpectationNv_iNv_j})$:
 	\[
        \sum\limits_{j=1}^{M(k)+1} \sum_{i=1}^{j - 1}  
        \mathbb{E} \left\{ \left|\mathcal{N}(v_j) \cap \mathcal{N}(v_i)  \right| \right\}+   \sum\limits_{j=M(k)+2}^{k} \sum_{i=j-M(k)}^{j - 1}	\mathbb{E} \left\{ \left|\mathcal{N}(v_j) \cap \mathcal{N}(v_i)  \right| \right\}
 	\]
 	\begin{equation}\label{TJ40}
 	\leq 5 c^2 \dfrac{(k M(k)+M^2(k)) \log N}{N^2}.
 	\end{equation}
        Setting 
        \begin{equation*}\label{Mk}
        M(k) = \max\left\{ \sqrt{k}, |V_N|^{1/6}
        \right\},
        \end{equation*}
 	we get from (\ref{TJ40}) 
 	\[
 	\sum\limits_{j=1}^{M(k)+1} \sum_{i=1}^{j - 1}  
 	\mathbb{E} \left\{ \left|\mathcal{N}(v_j) \cap \mathcal{N}(v_i)  \right| \right\}+   \sum\limits_{j=M(k)+2}^{k} \sum_{i=j-M(k)}^{j - 1}	\mathbb{E} \left\{ \left|\mathcal{N}(v_j) \cap \mathcal{N}(v_i)  \right| \right\}
 	\]
 	\begin{equation}\label{TJ42}
 	\leq 5 c^2 \dfrac{(k^{3/2} + k N^{1/3} + k + N^{2/3}) \log N}{N^2}= o\left(\dfrac{k^{2}}{|V_N|} \right)+o\left(\dfrac{1}{|V_N|^{1/2}} \right).
 	\end{equation}

 	Consider the last sum  in $(\ref{Expectation_Y_N})$ in the case when $k>|V_N|^{1/6}$ (otherwise, it is zero)
 	\begin{eqnarray}\label{TJ43}
 	&& \sum\limits_{j=M(k)+2}^{k} \sum_{i=1}^{j-M(k) - 1}  
 	\mathbb{E} \left\{ \left|\mathcal{N}(v_j) \cap \mathcal{N}(v_i)  \right| \right\} \\ \nonumber
 	&& = \sum\limits_{j=M(k)+2}^{k} \sum_{i=1}^{j-M(k) - 1}  
           \sum\limits_{a \in V_N} \sum\limits_{b \in V_N} \mathbb{P} (v_j = b, v_i = a) \sum\limits_{u \in V_N} p(a, u) p(u, b).
	\end{eqnarray}
	By Theorem \ref{ThIndep} (which conditions are fulfilled here) for $j - i \geq |V_N|^{1/6}$ and all $a \neq b$
 	\[
 	\mathbb{P} (v_j = b \big| v_i = a) = \dfrac{1}{N^2} \left ( 1 + o(1) \right ),
      \]
      uniformly in $a, b \in V_N$. Hence, taking also into
      account mentioned previously property that
		\begin{equation*}
 		\mathbb{P}(v_i = a) = \dfrac{1}{N^2},
        \end{equation*}
              we derive from  (\ref{TJ43}) 
	\begin{eqnarray*}\label{TJ44}
 	&& \sum\limits_{j=M(k)+2}^{k} \sum_{i=1}^{j-M(k) - 1}  
 	\mathbb{E} \left\{ \left|\mathcal{N}(v_j) \cap \mathcal{N}(v_i)  \right| \right\}  \\ \nonumber
 	&& = \sum\limits_{j=M(k)+2}^{k} \sum_{i=1}^{j-M(k) - 1}  
           \sum\limits_{a \in V_N} \sum\limits_{b \in V_N}
\dfrac{1}{N^4} \left ( 1 + o(1) \right )
           \sum\limits_{u \in V_N} p(a, u) p(u, b)
\\ \nonumber
&& = \left ( 1 + o(1) \right )
\sum\limits_{j=M(k)+2}^{k} \sum_{i=1}^{j-M(k) - 1}  
\sum\limits_{u \in V_N}\dfrac{1}{N^4}
\left (\sum\limits_{a \in V_N}p(a, u)\right )
 \left (\sum\limits_{b \in V_N}
  p(u, b)\right ).
  \end{eqnarray*}
  Applying now (\ref{Ecr}) to the last expression we get
  \begin{eqnarray}\label{TJ45}
  && \sum\limits_{j=M(k)+2}^{k} \sum_{i=1}^{j-M(k) - 1}  
  \mathbb{E} \left\{ \left|\mathcal{N}(v_j) \cap \mathcal{N}(v_i)  \right| \right\} 
  \\ \nonumber
  && = \sum\limits_{j=M(k)+2}^{k} \sum_{i=1}^{j-M(k) - 1}  
  \left ( 1 + o(1) \right )\dfrac{1}{N^2}\left (1 - \dfrac{2 c}{N} + O(1 / N^2)\right )^2 \\ \nonumber
  && 
   = \dfrac{k^2}{2 N^2}\left ( 1 + o(1) \right ),
  \end{eqnarray}
  where the last $o(1)$ is uniform in $k$. 
  
  Substituting (\ref{TJ45}) and (\ref{TJ42}) into (\ref{Expectation_Y_N}) gives us the statement of the Lemma.
\end{proof}

 \begin{Lemma}\label{Lm_Convergence_Y_N}
 	The following convergence takes place when $N\rightarrow \infty$
 	\begin{equation}\label{Lm_Convergence_Y_N_eq}
 	\sup\limits_{0 \leq s \leq T} \left | \dfrac{\mathcal{D}_1 \left (1 + [|V_N|^{2/3} s] \right ) - \mathbb{E} \left \{ \mathcal{D}_1\left (1 + [|V_N|^{2/3} s] \right ) \right \} }{|V_N|^{1/3}} \right | \overset{{P}}{\longrightarrow} 0.
 	\end{equation}
 \end{Lemma}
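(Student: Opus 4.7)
The strategy is to first establish pointwise convergence in probability at each fixed $s \in [0,T]$ by bounding the variance of $\mathcal{D}_1(k+1)$, and then upgrade to uniform convergence by exploiting the monotonicity of $k \mapsto \mathcal{D}_1(k)$ together with the explicit mean asymptotics from Lemma \ref{ThExpectationY_N}. Since $\mathbb{E}\mathcal{D}_1(1+[|V_N|^{2/3}s]) \sim \frac{s^2}{2}|V_N|^{1/3}$, the task reduces to showing $\mathrm{Var}(\mathcal{D}_1(1+[|V_N|^{2/3}s])) = o(|V_N|^{2/3})$ uniformly in $s \in [0,T]$, from which Chebyshev's inequality delivers the pointwise statement.

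For the variance bound I would use the representation $\mathcal{D}_1(k+1) = \sum_{2 \leq j \leq k}\sum_{1 \leq i < j} Y_{ij}$ with $Y_{ij} = \mathbb{E}\{|\mathcal{N}(v_j) \cap \mathcal{N}(v_i)| \mid \mathcal{F}_j^z\} \geq 0$, and expand $\mathbb{E}[\mathcal{D}_1(k+1)^2]$ as a quadruple sum over $(i_1,j_1,i_2,j_2)$, splitting according to whether the indices are all "well-separated" (every pairwise gap at least $|V_N|^{1/6}$) or not. For well-separated quadruples, Theorem \ref{ThIndep} gives the near-joint-uniformity and approximate independence of $v_{i_1}, v_{j_1}, v_{i_2}, v_{j_2}$ on $V_N$, which, combined with the tower property to peel off the conditional expectations defining $Y_{ij}$, yields the factorization $\mathbb{E}[Y_{i_1j_1}Y_{i_2j_2}] = \mathbb{E}[Y_{i_1j_1}]\mathbb{E}[Y_{i_2j_2}](1+o(1))$; summed over such quadruples this reconstitutes $(\mathbb{E}\mathcal{D}_1)^2$ up to lower-order error that cancels in the variance. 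For the remaining close quadruples, whose cardinality is at most $O(k^3 |V_N|^{1/6})$, I would apply the uniform pair estimate $\mathbb{E}[Y_{ij}] \leq 5c^2 \log N / N^2$ from \eqref{EstExpectationNv_iNv_j}--\eqref{tt1}, combined with a Cauchy--Schwarz bound on $\mathbb{E}[Y_{i_1j_1}Y_{i_2j_2}]$, where $\mathbb{E}[Y_{ij}^2] \leq \mathbb{E}|\mathcal{N}(v_j) \cap \mathcal{N}(v_i)|^2$ is controlled by a similar computation using \eqref{Pr1a2}. The total contribution of close quadruples is then $o(|V_N|^{2/3})$, concluding the variance estimate.

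For the uniform statement I would exploit that each $Y_{ij} \geq 0$, so both $\mathcal{D}_1(k)$ and $\mathbb{E}\mathcal{D}_1(k)$ are non-decreasing in $k$. Fix an integer $M$, partition $[0,T]$ by $s_0 = 0 < s_1 < \cdots < s_M = T$ with mesh $T/M$, and set $k_\ell = 1 + [|V_N|^{2/3}s_\ell]$. For any $s \in [s_{m-1}, s_m]$ with corresponding $k$, monotonicity yields
\[
\mathcal{D}_1(k_{m-1}) - \mathbb{E}\mathcal{D}_1(k_m) \;\leq\; \mathcal{D}_1(k) - \mathbb{E}\mathcal{D}_1(k) \;\leq\; \mathcal{D}_1(k_m) - \mathbb{E}\mathcal{D}_1(k_{m-1}),
\]
while Lemma \ref{ThExpectationY_N} gives $\mathbb{E}\mathcal{D}_1(k_m) - \mathbb{E}\mathcal{D}_1(k_{m-1}) \leq C\, T\, |V_N|^{1/3}/M + o(|V_N|^{1/3})$. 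Dividing by $|V_N|^{1/3}$, the supremum over $s$ is dominated by the maximum of the centered quantity at the $M+1$ grid points plus $O(1/M) + o(1)$. A union bound over the $M+1$ grid points combined with the pointwise convergence from the previous step sends this maximum to $0$ in probability for fixed $M$; letting $M \to \infty$ afterward completes the proof.

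The main obstacle lies in the variance computation: since $(\mathbb{E}\mathcal{D}_1)^2$ is already of order $|V_N|^{2/3}$, the target bound $\mathrm{Var} = o(|V_N|^{2/3})$ requires genuine cancellation from the leading well-separated contributions, so Theorem \ref{ThIndep} must be invoked not as a crude bound but in its precise factorization form with $o(1)$ errors carefully summed so as not to destroy the gain. An additional subtlety is that $Y_{ij}$ is not a function of $(v_i, v_j)$ alone but a conditional expectation given $\mathcal{F}_j^z$, so transferring the vertex-mixing from Theorem \ref{ThIndep} to a factorization of $\mathbb{E}[Y_{i_1j_1}Y_{i_2j_2}]$ requires an additional use of the tower property and uniform control over the conditional laws of $\mathcal{N}(v_i)$ and $\mathcal{N}(v_j)$ given $\mathcal{F}_j^z$.
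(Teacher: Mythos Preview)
Your plan has a quantitative gap in the variance step. With $k\le T|V_N|^{2/3}$, the number of ``close'' quadruples (some pairwise gap below $|V_N|^{1/6}$) is of order $k^3|V_N|^{1/6}\asymp |V_N|^{13/6}$, while your Cauchy--Schwarz bound gives only $\mathbb{E}[Y_{i_1j_1}Y_{i_2j_2}]\le\bigl(\mathbb{E}|\mathcal{N}(v_j)\cap\mathcal{N}(v_i)|^2\bigr)^{1/2}\cdots=O(\log N/|V_N|)$ per term. The product is $O(|V_N|^{7/6}\log N)$, which is \emph{not} $o(|V_N|^{2/3})$; it misses by a factor $|V_N|^{1/2}\log N$. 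Even restricting ``close'' to mean a small cross-gap between the two pairs does not rescue the count-times-bound arithmetic. So the sentence ``The total contribution of close quadruples is then $o(|V_N|^{2/3})$'' is not justified by the ingredients you list, and the pointwise variance bound---hence the whole argument---does not close as written.

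The paper sidesteps this difficulty by \emph{not} bounding $\mathrm{Var}\,\mathcal{D}_1$ at all. It groups the inner $i$-sum into a single random variable $Y_j=N^{2/3}\,\mathbb{E}\{\sum_r p_r\sum_{i<j}|N_r(v_j)\cap\mathcal{N}(v_i)|\mid\mathcal{F}_j^z\}$ and shows $\mathrm{Var}(Y_j)=O((\log N)^4/N)\to 0$ for each fixed $j$; the key saving comes from splitting by the torus distance $r$ and using independence across $r$, so that one sums $p_r^2$ rather than $p_r$. Uniformity then follows from the crude $L^1$ bound $\mathbb{P}(\sup_s|\cdot|>\varepsilon)\le (T/\varepsilon)\max_j\sqrt{\mathrm{Var}(Y_j)}$, with no sandwich argument needed. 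Your monotonicity step for uniformity is correct and pleasant, but to make your variance route work you would need covariance bounds for $Y_{i_1j_1},Y_{i_2j_2}$ that are essentially as fine as the paper's per-$j$ computation; the blunt Cauchy--Schwarz on close quadruples is not enough.
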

 \begin{proof}
 	Recall that $|V_N|=N^2$.
 	Let  us denote 
 	\begin{equation}\label{Y}
 	Y_j = N^{2/3}\mathbb{E} \left\{ \left. \sum\limits_{r = 1}^{N} p_r \sum\limits_{i = 1}^{j-1} 
 	\left|N_r(v_j) \cap \mathcal{N}(v_i)  \right| \right| \mathcal{F}_{j}^z  \right\}.
 \end{equation}
 	In this notation (\ref{YN}) becomes
 	\[\mathcal{D}_1(k+1)=\frac{1}{N^{2/3}} \sum_{j=2}^k Y_j,\]
 	and (\ref{Lm_Convergence_Y_N_eq}) will follow if both
 	\begin{equation}\label{TJ46*}
 	\sup\limits_{0 \leq s \leq |V_N|^{-1/3}} \left | \dfrac{1}{N^{4/3}} \sum\limits_{j = 2}^{1 + [N^{4/3} s]} \left( Y_j - \mathbb{E} Y_j \right)  \right | \overset{{P}}{\longrightarrow} 0,
 	\end{equation}
 	and
\begin{equation}\label{TJ46}
\sup\limits_{|V_N|^{-1/3} <s \leq T} \left | \dfrac{1}{N^{4/3}} \sum\limits_{j = N^{2/3}}^{1 + [N^{4/3} s]} \left( Y_j - \mathbb{E} Y_j \right)  \right | \overset{{P}}{\longrightarrow} 0.
\end{equation}

To establish  (\ref{TJ46*}) we derive first
for an arbitrarily fixed $\varepsilon >0$ 

\begin{equation}\label{TJ47*}
\mathbb{P} \left \{\sup\limits_{0 \leq s\leq  |V_N|^{-1/3}} \left | \dfrac{1}{N^{4/3}} \sum\limits_{j = 2}^{1 + [N^{4/3} s]} \left( Y_j - \mathbb{E} Y_j \right)  \right | > \varepsilon \right \}
\end{equation}
\[
\leq \dfrac{1}{\varepsilon} \mathbb{E} \left ( \sup\limits_{0 \leq s \leq|N|^{-2/3}}  \dfrac{1}{N^{4/3}} \sum\limits_{j = 2}^{1 + [N^{4/3} s]} |Y_j - \mathbb{E} Y_j|   \right ) 
\leq 
\dfrac{2}{\varepsilon N^{2/3}} \max\limits_{j \leq N^{2/3}+1} {\mathbb{E} |Y_j - \mathbb{E} Y_j|}.
\]
Since $Y_j$ is non-negative, here we have
\[\mathbb{E} |Y_j - \mathbb{E} Y_j|\leq 2\mathbb{E} Y_j,
	\]
	where by the definition (\ref{Y}) and a uniform bound	(\ref{EstExpectationNv_iNv_j})
		\begin{equation}\label{Y1}
\mathbb{E}	Y_j \leq  N^{2/3}  \sum\limits_{i = 1}^{j-1} \mathbb{E}  
	\left|\mathcal{N}(v_j) \cap \mathcal{N}(v_i)  \right|
	\leq N^{2/3} j \dfrac{5 c^2 \log N}{N^2}
	\end{equation}
	for all $j\leq 1 + N^{2/3} $. Substituting this bound into (\ref{TJ47*}) we obtain
	\begin{equation}\label{Tau1}
	\mathbb{P} \left \{\sup\limits_{0 \leq s\leq  |V_N|^{-1/3}} \left | \dfrac{1}{N^{4/3}} \sum\limits_{j = 2}^{1 + [N^{4/3} s]} \left( Y_j - \mathbb{E} Y_j \right)  \right | > \varepsilon \right \} 
	\end{equation}
	\[
	\leq \frac{c_1}{\varepsilon N^{2/3}} \frac{N^{4/3}\log N}{N^2},
	\]
	which goes to zero as $N\rightarrow \infty$. This proves  (\ref{TJ46*}).

       We are left to prove (\ref{TJ46}). Consider again as in (\ref{TJ47*})
 	for an arbitrarily fixed $\varepsilon >0$ 
 	\begin{equation}\label{TJ47}
 	\mathbb{P} \left \{\sup\limits_{|V_N|^{-1/3} \leq s \leq T} \left | \dfrac{1}{N^{4/3}} \sum\limits_{j = N^{2/3}}^{1 + [N^{4/3} s]} \left( Y_j - \mathbb{E} Y_j \right)  \right | > \varepsilon \right \}
 	 	\end{equation}
 	\[
 	\leq \dfrac{1}{\varepsilon} \mathbb{E} \left ( \sup\limits_{N^{-2/3} \leq s \leq T}  \dfrac{1}{N^{4/3}} \sum\limits_{j = N^{2/3}}^{1 + [N^{4/3} s]} |Y_j - \mathbb{E} Y_j|   \right ) 
 	\leq \dfrac{T}{\varepsilon} \max\limits_{N^{2/3}\leq j \leq T N^{4/3}} \sqrt{\mathrm{Var} (Y_j) }.
 	\]
 	By the property of the conditional expectation we have
 	\begin{equation*}
 	\mathrm{Var} (Y_j) \leq \mathrm{Var} \left (N^{2/3} \sum\limits_{r = 1}^{N} p_r \sum\limits_{i = 1}^{j-1} \left| N_r (v_j) \cap \mathcal{N}(v_i)  \right| \right )
 .	\end{equation*}
 	Therefore by proving
 	\begin{equation}\label{Y_N}
 	\max\limits_{N^{2/3} \leq j \leq T N^{4/3}} \mathrm{Var} \left (N^{2/3} \sum\limits_{r = 1}^{N} p_r \sum\limits_{i = 1}^{j-1} \left|N_r (v_j) \cap \mathcal{N}(v_i)  \right| \right ) \rightarrow 0
 	\end{equation}
 	we establish (\ref{TJ46}) via (\ref{TJ47}), and hence, the statement of lemma follows.
 	
 	To prove (\ref{Y_N}) we notice first that  random variables $\sum\limits_{i = 1}^{j-1} \left|N_r (v_j) \cap \mathcal{N}(v_i)  \right|$ are independent for different $r$. Hence,
 	\begin{equation}\label{TJ48}
 	\mathrm{Var} \left (N^{2/3} \sum\limits_{r = 1}^{N} p_r \sum\limits_{i = 1}^{j-1} 
 	\left| N_r (v_j) \cap \mathcal{N}(v_i)  \right| \right ) 
 	\end{equation}
 	\begin{eqnarray}
 	&&
 	=
 	N^{4/3} \sum\limits_{r = 1}^{N} p_r^2 \mathrm{Var} \left ( \sum\limits_{i = 1}^{j-1} \left| N_r (v_j) \cap \mathcal{N}(v_i)  \right| \right ) \nonumber \\ \nonumber
 	&& = N^{4/3} \sum\limits_{r = 1}^{N} p_r^2 \left ( \sum\limits_{i = 1}^{j-1} \mathrm{Var} \left| N_r (v_j) \cap \mathcal{N}(v_i) \right| + \right . \\ \nonumber
 	&& + \left . 2 \sum\limits_{i = 1}^{j-1} \sum\limits_{l = 1}^{i-1} \mathrm{Cov} \left ( \left| N_r (v_j) \cap \mathcal{N}(v_i)  \right|, \left| N_r (v_j) \cap \mathcal{N}(v_l)  \right| \right ) \right ) .
 	\end{eqnarray}
 	
 	Consider 
 	\begin{equation}\label{TJ51}
 	\sum\limits_{i = 1}^{j-1} \mathrm{Var} \left| N_r (v_j) \cap \mathcal{N}(v_i) \right|.
 	\end{equation}
 	Making use of independent Bernoulli random variables $\eta_{uv} \sim Be(p(u,v))$, introduced in (\ref{eta}), we derive
 	\begin{equation}\label{TJ49}
          \mathrm{Var} \left| N_r (v_j) \cap \mathcal{N}(v_i) \right| = \mathrm{Var} \left ( \sum\limits_{u \in N_r(v_j)} \eta_{uv_i} \right )
	\end{equation}
        
        \[
          = \mathbb{E} \ \mathrm{Var} \left \{ \sum\limits_{u \in N_r (v_j)} \eta_{uv_i} \big| v_j, v_i \right \} + \mathrm{Var}\ \mathbb{E} \left \{ \sum\limits_{u \in N_r (v_j)} \eta_{uv_i} \big| v_j, v_i \right \}  \]

 	\[= \mathbb{E} \sum\limits_{u \in N_r (v_j)} p(u, v_i) \left ( 1 - p(u, v_i) \right ) +  \mathrm{Var} \sum\limits_{u \in N_r (v_j)} p(u, v_i) \]
 	\[ = \mathbb{E} \sum\limits_{u \in N_r (v_j)} p(u, v_i)\left ( 1 - p(u, v_i) \right )\]
 	\[  + \mathbb{E} \left ( \sum\limits_{u \in N_r (v_j)} p(u, v_i) \right )^2 - \left ( \mathbb{E}  \sum\limits_{u \in N_r (v_j)} p(u, v_i)  \right )^2 
 \]
 	\[ \leq  \mathbb{E} \sum\limits_{u \in N_r (v_j)} p(u, v_i)  + \mathbb{E} \sum\limits_{u, w \in N_r (v_j):w\neq u} p(u, v_i) p(w, v_i)\]
 	\[ = \sum_{a,b \in V_N: a\neq b} \mathbb{P} \left ( v_j = b, v_i = a \right )\sum\limits_{u \in N_r (b)} \left (
 	 p(u, a) +  \sum\limits_{w \in N_r(b):w\neq u} p(w, a) p(a,u)\right ). 
 	\]
 	
 	Return to the sum in (\ref{TJ51}). We shall split it as follows
 	\[
 	\sum\limits_{i = 1}^{j-1} \mathrm{Var} \left| N_r (v_j) \cap \mathcal{N}(v_i) \right| 
 	\]
 	\begin{equation}\label{TJ52}
 	=\sum\limits_{i = 1}^{j- |V_N|^{1/6}} \mathrm{Var} \left| N_r (v_j) \cap \mathcal{N}(v_i) \right|+
 	\sum\limits_{i = j-|V_N|^{1/6}+1}^{j-1} \mathrm{Var} \left| N_r (v_j) \cap \mathcal{N}(v_i) \right| .
 	\end{equation}
 In the first sum, where
 	$j - i \geq |V_N|^{1/6}$  we have by (\ref{TM28}) from Theorem \ref{ThIndep} 
 	for all $a\neq b$
 	\begin{equation}\label{EstVjVi}
 	\mathbb{P} \left ( v_j = b, v_i = a \right ) = \dfrac{1}{N^4} \left ( 1 + o(1) \right ),
 	\end{equation}
 	in which case by (\ref{TJ49}) 
 	\begin{equation}\label{TJ50}
 	\sum\limits_{i = 1}^{j-|V_N|^{1/6}} \mathrm{Var} \left| N_r (v_j) \cap \mathcal{N}(v_i) \right| \leq j  \dfrac{1}{N^4} \left ( 1 + o(1) \right ) 
 	\end{equation}
 	\[\times 
 	\sum_{b \in V_N}
 	\sum\limits_{u \in N_r (b)} \left (
 	\sum_{a \in V_N}
 	p(u, a) +  
 	\sum\limits_{w \in N_r(b):w\neq u} 
 	\sum_{a \in V_N} p(w, a) p(a,u)\right ).
 	\]
 	Applying now results (\ref{Ecr}) and (\ref{tt1}) to  the two sums in the last brackets, correspondingly, we derive from (\ref{TJ50}) for all large $N$
 	\begin{equation}\label{TJ55}
 	\sum\limits_{i = 1}^{j- |V_N|^{1/6}} \mathrm{Var} \left| N_r (v_j) \cap \mathcal{N}(v_i) \right| \leq 2
 	\dfrac{j}{N^2} N_r \left ( 1 + N_r\dfrac{5 c^2 \log N}{N^2} \right ) 
 \end{equation}
 	\[ \leq 2\dfrac{T N^{4/3}N_r}{N^2} \left ( 1 + o(1) \right ) <3 \dfrac{T N_r}{N^{2/3}} 
 	\]
 	for all $j\leq T N^{4/3}$.
 	
 	We treat the remaining sum in (\ref{TJ52})
 	in a similar way, 
 	only instead of (\ref{EstVjVi}) we use  bound (\ref{TM28*}) from  Theorem \ref{ThIndep}, by	 which 
 		\begin{equation*}
 	\mathbb{P} \left ( v_j = b, v_i = a \right ) \leq \dfrac{c}{N^3},
 	\end{equation*}
 	and this  gives us
 	\begin{equation}\label{TJ54}
 	 \sum\limits_{i = j - |V_N|^{1/6} + 1}^{j - 1} \mathrm{Var} \left| N_r (v_j) \cap \mathcal{N}(v_i) \right| 
 	\leq 2c
 	\dfrac{N^{1/3}}{N} N_r \left ( 1 + N_r\dfrac{5 c^2 \log N}{N^2} \right ) 
 	\end{equation}
 	\[\leq 3 \dfrac{ c  N_r}{ N^{2/3}} \]
 	for all $j\leq k \leq T N^{4/3}$.
 	Combining now (\ref{TJ54}) and (\ref{TJ55}) in (\ref{TJ52}), we obtain
 	\begin{equation}\label{EstVar}
 	\sum\limits_{i = 1}^{j-1} \mathrm{Var} \left| N_r (v_j) \cap \mathcal{N}(v_i) \right| \leq 3 \dfrac{(T + c)  N_r}{N^{2/3}} 
 	\end{equation}
 	for all large $N$.
 	
 	Consider now terms with
 	covariances in (\ref{TJ48}). Using again variables $\eta_{u v}$
        we obtain first the following representation
        \[\left| N_r (v_j) \cap \mathcal{N}(v_i)  \right| =
 \sum\limits_{u \in N_r (v_j)} \eta_{u v_i} .
\]
Then
	\begin{equation}\label{Cov}
\mathrm{Cov} \left ( \left| N_r (v_j) \cap \mathcal{N}(v_i)  \right|, \left| N_r (v_j) \cap \mathcal{N}(v_l)  \right| \right ) 
	\end{equation}
        \[
          =  \mathbb{E}
          \left ( \sum\limits_{u \in N_r (v_j)} \eta_{u v_i} \sum\limits_{u' \in N_r (v_j)} \eta_{u' v_l}
          \right )\]
     \[
            - \left (\mathbb{E} \sum\limits_{u \in N_r (v_j)} \eta_{u v_i}\right ) \left (\mathbb{E}\sum\limits_{u \in N_r (v_j)} \eta_{u v_l}\right )
          \]

\[= \sum_{a,a', b \in V_N}
          \mathbb{P}\{v_l=a, v_i=a', v_j=b\}\sum\limits_{u \in N_r (b)} p (u, a')
          \sum\limits_{u' \in N_r (b)} p (u', a)\]
\[-   \left ( \sum_{a',b\in V_N}
    \mathbb{P}\{v_i=a', v_j=b\}\sum\limits_{u \in N_r (b)} p (u, a')\right )\]
\[\times
  \left ( \sum_{a,b \in V_N}
    \mathbb{P}\{v_l=a, v_j=b\}\sum\limits_{u \in N_r (b)} p (u, a)\right ).
\]

Consider separately different cases.
When both
\begin{equation}\label{condit}
  j-i \geq |V_N|^{1/6} \mbox{ and }  i-l \geq |V_N|^{1/6},
\end{equation}
by
Theorem \ref{ThIndep} we have for any three different values $a,a',b$
\begin{equation*}\label{mix3}
 \mathbb{P}\{v_l=a, v_i=a', v_j=b\}=\frac{1}{|V_N|^3}(1+o(1)),
	\end{equation*}
        in which case  covariance in (\ref{Cov}) becomes
	\begin{equation}\label{Cov2}
\mathrm{Cov} \left ( \left| N_r (v_j) \cap \mathcal{N}(v_i)  \right|, \left| N_r (v_j) \cap \mathcal{N}(v_l)  \right| \right ) = \frac{N_r^2}{|V_N|^2} (1 + o(1)).
	\end{equation}

        In all remaning cases of indices $1\leq l<i<j<k$  when (\ref{condit}) is not held, we derive with a help of Theorem \ref{ThIndep}
	\[
          \mathbb{P}\{v_l=a, v_i=a', v_j=b\}\]
        \[=\mathbb{P}\{v_j=b \mid
       v_i=a',    v_l=a\}\mathbb{P}\{v_i=a' \mid
       v_l=a\}\mathbb{P}\{
       v_l=a\}\]
        \begin{equation}\label{TJab}
        \leq
       B \frac{\log ^2N}{N^2}p(a,a')p(b,a'),
        \end{equation}
        where $B$ is some positive constant.
 Therefore we can bound 
 \[\sum_{a,a', b \in V_N}
 \mathbb{P}\{v_l=a, v_i=a', v_j=b\}\sum\limits_{u \in N_r (b)} p (u, a')
 \sum\limits_{u' \in N_r (b)} p (u', a)\]
 \[\leq B \frac{\log ^2N}{N^2}
 \sum_{b \in V_N}
 \sum\limits_{u,u' \in N_r (b)}\sum_{a' \in V_N} p (u, a')p(a',b)\sum_{a \in V_N}
 p (u', a)p(a,a').\]
 Making use of the bound (\ref{tt1}) in the last formula, we derive
 \[\sum_{a,a', b \in V_N}
 \mathbb{P}\{v_l=a, v_i=a', v_j=b\}\sum\limits_{u \in N_r (b)} p (u, a')
 \sum\limits_{u' \in N_r (b)} p (u', a)\]
 \begin{equation}\label{Cov3}
 \leq B \frac{\log ^2N}{N^2}
 \sum_{b \in V_N}
 \sum\limits_{u,u' \in N_r (b)}
 \left(5c^2 \frac{\log N}{N^2}\right)^2\leq B_1 \frac{\log ^4 N}{N^4}
 N_r^2,
 \end{equation}
 where $B_1$ is some positive constant.
 
        In the same manner we derive
        	\begin{equation}\label{TJ60}
        \mathbb{E} \sum\limits_{u \in N_r (v_j)} \eta_{u v_i}
        \leq  \sum_{b,b' \in V_N}
          \mathbb{P}\{v_i=b, v_j=b'\}\sum\limits_{u \in N_r (b')} p (u, b),
        \end{equation}
        where by the inequality (\ref{TM28*})  from Theorem \ref{ThIndep} 
     \begin{equation*}\label{TJ61*}
        \mathbb{P}\{v_i=b, v_j=b' \}=\mathbb{P}\{v_j=b' \mid
       v_i=b\}\mathbb{P}\{v_i=b\}
      \leq 5c 
       \frac{\log N}{N^2}p(b,b')
     \end{equation*}
     for all $1\leq i<j\leq T|V_N|^{2/3}$. This bound together with  (\ref{TJ60}) give us
    	\begin{equation*}\label{TJ61}
        \mathbb{E} \sum\limits_{u \in N_r (v_j)} \eta_{u v_i} 
        \leq 5c 
       \frac{\log N}{N^2}
        \sum_{c \in V_N}
          \sum\limits_{u \in N_r (c)}  \sum_{b \in V_N} p (u, b)p(b,c).
        \end{equation*}
Making use of bound (\ref{tt1}) in the last formula we obtain
	\begin{equation}\label{TJ62}
        \mathbb{E} \sum\limits_{u \in N_r (v_j)} \eta_{u v_i}
        \leq B
       \frac{\log N}{N^2}
        \sum_{c \in V_N}
          \sum\limits_{u \in N_r (c)}  \frac{\log N}{N^2}  \leq B_2
       \frac{\log^2 N}{N^2}N_r
        \end{equation}
uniformly in $1\leq i<j\leq T|V_N|^{2/3}$, where $B_2$ is  some positive constant.
Bounds (\ref{TJ62}) and (\ref{Cov3}) 
yield the following bound for the 
covariance in (\ref{Cov}):
\begin{equation}\label{Cov4}
\mathrm{Cov} \left ( \left| N_r (v_j) \cap \mathcal{N}(v_i)  \right|, \left| N_r (v_j) \cap \mathcal{N}(v_l)  \right| \right ) \leq B_3 \frac{N_r^2}{|V_N|^2} \log^4 N,
\end{equation}
where $B_3$ is  some positive constant.

Now we can bound variance in (\ref{TJ48}) with a help of bounds (\ref{EstVar}), (\ref{Cov2}), and (\ref{Cov4}):
	\[
 	\mathrm{Var} \left (N^{2/3} \sum\limits_{r = 1}^{N} p_r \sum\limits_{i = 1}^{j-1} 
 	\left| N_r (v_j) \cap \mathcal{N}(v_i)  \right| \right ) 
    \]
    \[\leq N^{4/3} \sum\limits_{r = 1}^{N} p_r^2 B_4 
    \left(
    \dfrac{(T + c) N_r}{N^{2/3}} 
    + 2 j^2
   \frac{N_r^2}{|V_N|^2} \log^4 N
   \right ), \] 
   where $B_4$ is  some positive constant. Hence, after straightforward computation we derive from here
   	\begin{equation*}\label{TJ56}
   \mathrm{Var} \left (N^{2/3} \sum\limits_{r = 1}^{N} p_r \sum\limits_{i = 1}^{j-1} 
   \left| N_r (v_j) \cap \mathcal{N}(v_i)  \right| \right ) 
   = O\left(\frac{\log^4 N}{N} \right).
   \end{equation*}
   for all $j\leq T N^{4/3}$,     which converges to $0$ as $N \rightarrow \infty$. This confirms (\ref{Y_N}) and completes the proof.	
 \end{proof}
 
 As a straightforward corollary of the last lemma and Lemma \ref{ThExpectationY_N} we derive the following result.
 
 \begin{Cor}\label{D1} 	For any fixed $T>0$
 \[
 \sup\limits_{0 \leq s \leq T} 
 \left|
  \dfrac{\mathcal{D}_1 \left (1 + [N^{4/3} s] \right )  }{N^{2/3}} -\frac{s^2}{2} \right| \overset{{P}}{\longrightarrow} 0.
 \]
\end{Cor}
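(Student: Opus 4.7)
The plan is to split $\mathcal{D}_1$ into its centered part plus its mean, invoke Lemma \ref{Lm_Convergence_Y_N} for the former and Lemma \ref{ThExpectationY_N} for the latter, and verify that both convergences are uniform in $s\in[0,T]$. Concretely, write
\[
\frac{\mathcal{D}_1(1+[N^{4/3}s])}{N^{2/3}}-\frac{s^2}{2}
=\frac{\mathcal{D}_1(1+[N^{4/3}s])-\mathbb{E}\mathcal{D}_1(1+[N^{4/3}s])}{N^{2/3}}
+\left(\frac{\mathbb{E}\mathcal{D}_1(1+[N^{4/3}s])}{N^{2/3}}-\frac{s^2}{2}\right),
\]
and apply the triangle inequality inside $\sup_{0\leq s\leq T}|\cdot|$.

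For the stochastic term, recall $|V_N|=N^2$, so $|V_N|^{1/3}=N^{2/3}$ and $|V_N|^{2/3}=N^{4/3}$; Lemma \ref{Lm_Convergence_Y_N} then says exactly that
\[
\sup_{0\leq s\leq T}\left|\frac{\mathcal{D}_1(1+[N^{4/3}s])-\mathbb{E}\mathcal{D}_1(1+[N^{4/3}s])}{N^{2/3}}\right|\stackrel{P}{\longrightarrow}0.
\]

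For the deterministic term, substitute $k=[N^{4/3}s]$ into Lemma \ref{ThExpectationY_N}: since the lemma states that $\mathbb{E}\mathcal{D}_1(k+1)=\tfrac{s^2}{2}|V_N|^{1/3}(1+o(1))+o(|V_N|^{-1/2})$ uniformly for $s\in[0,T]$, dividing by $N^{2/3}=|V_N|^{1/3}$ gives
\[
\frac{\mathbb{E}\mathcal{D}_1(1+[N^{4/3}s])}{N^{2/3}}=\frac{s^2}{2}(1+o(1))+o(N^{-5/3}),
\]
and the $o(1)$ here is uniform in $s$, so $\sup_{0\leq s\leq T}\bigl|\mathbb{E}\mathcal{D}_1(1+[N^{4/3}s])/N^{2/3}-s^2/2\bigr|\to 0$. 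Combining the two displays yields the claim.

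There is no real obstacle beyond bookkeeping: the only subtlety is to confirm that the $o(1)$ error in Lemma \ref{ThExpectationY_N} is genuinely uniform in $s\in[0,T]$, which one reads off from that lemma's proof (the error estimates depend on $N$ and on $T$ through $k\leq TN^{4/3}$, not on the specific value of $s$). Since Lemma \ref{Lm_Convergence_Y_N} already supplies a $\sup$ over $s\in[0,T]$ in probability, no further work is needed for the stochastic part.
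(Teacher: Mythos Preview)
Your proposal is correct and is exactly the argument the paper has in mind: the paper states the corollary as a ``straightforward corollary of the last lemma and Lemma~\ref{ThExpectationY_N}'' without further detail, and your decomposition into centered plus mean, invoking Lemma~\ref{Lm_Convergence_Y_N} and Lemma~\ref{ThExpectationY_N} respectively with the uniformity in $s$ noted in each, is precisely that derivation spelled out.
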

\hfill$\Box$

 \subsubsection{$\mathcal{D}_2(k)$.}
 Consider as defined in (\ref{TSe10})
 \begin{equation*}\label{TJ63}
 \mathcal{D}_2(k):= \sum\limits_{j=2}^{k}  \sum_{i=1}^{j-1} {\bf I}_{{\cal A}_{i-1}=\emptyset} \sum_{r=1}^N p_r
 \mathbb{P} \left\{ v_i \in {N_r}(v_j) \big| \mathcal{F}_{j}^z  \right\}. 
 \end{equation*}
 The expectation of this term is
 \begin{equation}\label{TJ76}
 \mathbb{E} \mathcal{D}_2(k) = \sum\limits_{j=2}^{k} \sum_{r=1}^N p_r\sum_{i=1}^{j-1} \mathbb{E}  {\bf I}_{{\cal A}_{i-1}=\emptyset}
 	{\bf I}_{
 		v_i \in {N_r}(v_j)
 	}
 \end{equation}
 \[=\sum\limits_{j=2}^{k} \sum_{i=1}^{j-1} \mathbb{E}  {\bf I}_{{\cal A}_{i-1}=\emptyset}
  \sum_{r=1}^N p_r\mathbb{P} \left\{ v_i \in {N_r}(v_j) \big| {\cal A}_{i-1}=\emptyset  \right\}.
 \]
 With a help of results of Theorem \ref{ThIndep} we derive first
 \[
  \sum_{r=1}^N p_r
 \mathbb{P} \left\{ v_i \in {N_r}(v_j) \big|  {\cal A}_{i-1}=\emptyset   \right\}
 \]
 \[= \sum_{r=1}^N p_r
 \sum\limits_{b\in V_N}
 \sum\limits_{a\in N_r(b)}
 \mathbb{P} \left\{ v_i=a, v_j=b \big|  {\cal A}_{i-1}=\emptyset   \right\}\]
 \[
 \leq 
 \sum\limits_{b\in V_N} \sum_{r=1}^N p_r
 \sum\limits_{a\in N_r(b)} 
 p(a,b)\frac{O\left(\log N\right)}{N^2}
 = \frac{O\left(\log N\right)}{N^2}\]
 uniformly in $0<i<j<k$. 
 Substituting this bound into (\ref{TJ76}) we get
 \[\mathbb{E} \mathcal{D}_2(k) \leq \frac{O\left(\log N\right)}{N^2}\sum\limits_{j=2}^{k} \mathbb{E}\sum_{i=1}^{j-1}  {\bf I}_{{\cal A}_{i-1}=\emptyset}
 \]
\[ \leq 
O\left(\frac{\log N}{N^2} \right)
 \sum\limits_{j=2}^{k}\mathbb{E}
\left ( 1 + \max\limits_{i < j-1} |z(i)| \right )
\]
\begin{equation}\label{TWA}
\leq O\left
(\frac{\log N}{N^2} \right)
k \mathbb{E}\left ( 1 + \max\limits_{j < k} |z(j)| \right ).
\end{equation}

 \begin{Lemma}\label{Lbz}
 	For any fixed $T$ and for any $K >1$ 
 	\begin{equation*}\label{Bz}
 	\mathbb{P} \left ( \max\limits_{k \leq N^{4/3} T} | z(k) | > K N^{2/3} \right ) \leq 2 \frac{\sqrt{T}+T^2}{K}. 
 	\end{equation*}
      \end{Lemma}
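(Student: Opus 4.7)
The plan is to use the martingale decomposition $z(k) = \mathcal{M}(k) + \mathcal{D}(k)$ of (\ref{ms}), bound the two pieces separately, close a bootstrap, and finish with Markov's inequality. Let $Z_N := \mathbb{E}\max_{k \leq TN^{4/3}}|z(k)|$.

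For the martingale, Lemma \ref{Lm_Est_ENvjjFjz} gives
\[
\mathbb{E}[(\Delta\mathcal{M}(i))^2 \mid \mathcal{F}_i^z] = \mathrm{Var}(|\mathcal{N}(v_i,i)|\mid \mathcal{F}_i^z) \leq \mathbb{E}[|\mathcal{N}(v_i,i)|^2\mid \mathcal{F}_i^z] - (\mathbb{E}[|\mathcal{N}(v_i,i)|\mid \mathcal{F}_i^z])^2 \leq 1+o(1),
\]
so by orthogonality $\mathbb{E}\mathcal{M}(k)^2 \leq k(1+o(1))$. Doob's $L^2$ maximal inequality together with Cauchy--Schwarz then yields
\[
\mathbb{E}\max_{k \leq TN^{4/3}}|\mathcal{M}(k)| \leq 2\sqrt{\mathbb{E}\mathcal{M}(TN^{4/3})^2} \leq 2\sqrt{T}\,N^{2/3}(1+o(1)).
\]

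For the drift I would use the expansion (\ref{TSe10}) to write $|\mathcal{D}(k)| \leq 2ck/N + \mathcal{D}_1(k) + \mathcal{D}_2(k) + \mathcal{D}_3(k) + \mathcal{D}_4(k)$. Each $\mathcal{D}_i$ is non-negative and non-decreasing in $k$, so its pathwise maximum over $k \leq TN^{4/3}$ is attained at the endpoint. Lemma \ref{ThExpectationY_N} gives $\mathbb{E}\mathcal{D}_1(TN^{4/3}) \leq \frac{T^2}{2}N^{2/3}(1+o(1))$; bound (\ref{TWA}) reads $\mathbb{E}\mathcal{D}_2(TN^{4/3}) \leq O(T\log N/N^{2/3})(1+Z_N)$; and analogous estimates for $\mathcal{D}_3, \mathcal{D}_4$ (obtained by bounding each inner summand using Theorem \ref{ThIndep} and the triangle bound (\ref{tt1}), together with the counting identity $\sum_i \mathbf{1}_{\mathcal{A}_{i-1}=\emptyset} \leq 1 + \max_j |z(j)|$) are of order $o(1)(1+Z_N)$. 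Summing,
\[
\mathbb{E}\max_{k \leq TN^{4/3}}|\mathcal{D}(k)| \leq \tfrac{T^2}{2}N^{2/3}(1+o(1)) + o(1)Z_N.
\]

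Combining the two estimates, $Z_N \leq (2\sqrt{T} + T^2/2)N^{2/3}(1+o(1)) + o(1)Z_N$, which solves for $N$ large to $Z_N \leq 2(\sqrt{T}+T^2)N^{2/3}$. Markov's inequality then finishes the proof:
\[
\mathbb{P}\Big(\max_{k\leq TN^{4/3}}|z(k)| > KN^{2/3}\Big) \leq \frac{Z_N}{KN^{2/3}} \leq \frac{2(\sqrt{T}+T^2)}{K}.
\]
The main obstacle is the circular dependence of the drift estimate on $Z_N$ through (\ref{TWA}) and its $\mathcal{D}_3, \mathcal{D}_4$ analogues; the bootstrap closes only because the $\log N/N^2$ prefactor coming from the triangle bound (\ref{tt1}), after multiplication by $k \leq TN^{4/3}$, leaves a coefficient $o(1)$ on $Z_N$.
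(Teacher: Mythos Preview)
Your argument is correct in spirit but differs from the paper's route. The paper avoids the bootstrap entirely by introducing the stopping time $T_N=\min\{T_N^*,[TN^{4/3}]\}$ with $T_N^*=\min\{s:|z(s)|>KN^{2/3}\}$, so that $\max_{j<T_N}|z(j)|\le KN^{2/3}$ holds \emph{deterministically}; this makes $\mathbb{E}\mathcal{D}_2(T_N)=O(\log N)$ directly via (\ref{TWA}), and the martingale is handled by optional sampling rather than Doob's maximal inequality. For $\mathcal{D}_3,\mathcal{D}_4$ the paper simply drops the indicator $\mathbf{1}_{\mathcal{A}_{l-1}=\emptyset}$ and bounds the full triple sum by $O(\log^4 N)$ using (\ref{TJab}) and (\ref{tt1}), without invoking $Z_N$ at all. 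Your bootstrap closes for the same underlying reason---the $O(\log N/N^2)$ prefactor---so both routes work; the stopping-time trick is a bit cleaner because it sidesteps any a~priori finiteness issue for $Z_N$ and yields the constant in one pass.

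One small imprecision: the inequality $\mathrm{Var}(|\mathcal{N}(v_i,i)|\mid\mathcal{F}_i^z)\le 1+o(1)$ does not hold pointwise, since the lower bound in (\ref{tm10}) depends on $|I_{j-1}|$ and is vacuous if $|I_{j-1}|$ is large. What you actually need (and what the paper uses in (\ref{Var_est})--(\ref{Var_est2})) is
\[
\mathbb{E}(\Delta\mathcal{M}(j))^2=\mathbb{E}\,\mathrm{Var}\big(\Delta z(j)\mid\mathcal{F}_j^z\big)\le \mathrm{Var}\big(\Delta z(j)\big)=\mathbb{E}|\mathcal{N}(v_j,j)|^2-\big(\mathbb{E}|\mathcal{N}(v_j,j)|\big)^2\le 1+o(1),
\]
by Lemma~\ref{Lm_Est_ENvjj}. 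With this fix your orthogonality and Doob steps go through as written.
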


 	\begin{proof}
 		We follow the idea of the proof from \cite{Aldous}. Define
 		\begin{eqnarray*}
 		T_{N}^{*} = \min \{ s : |z(s)| > K N^{2/3}\}, \\
 		T_N = \min \{ T_{N}^{*}, [ N^{4/3} T ] \}.
 		\end{eqnarray*}
 		Then
 		\begin{eqnarray}\label{Prop_Z_stoch_bound_Eq1}
 		&&\mathbb{P} \left ( \max\limits_{k \leq N^{4/3} T} | z(k) | > K N^{2/3} \right ) = \mathbb{P} \left ( | z(T_N) | > K N^{2/3} \right )  \\ \nonumber
 		&&\leq \mathbb{P} \left ( | \mathcal{M}(T_N) + \mathcal{D}(T_N) | > K N^{2/3} \right ) \leq \dfrac{\mathbb{E} |\mathcal{M}(T_N)| + \mathbb{E} |\mathcal{D}(T_N)|}{K N^{2/3}}  \\ \nonumber
 		&&\leq \dfrac{(\mathbb{E} (\mathcal{M}(T_N))^2)^{1/2} + \mathbb{E} |\mathcal{D}(T_N)|}{K N^{2/3}}.
 		\end{eqnarray}
 		Firstly, by optional sampling theorem
 		\begin{eqnarray}\label{TJ65}
 		\mathbb{E} (\mathcal{M}(T_N))^2  \leq \sum\limits_{j = 1}^{[ N^{4/3} T ] - 1} \mathbb{E} (\Delta \mathcal{M}(j))^2,
 		\end{eqnarray}
 		where by definition (\ref{Ms})
 		\begin{eqnarray}\label{Var_est}
 		&& \mathbb{E} (\Delta \mathcal{M}(j))^2 = \mathbb{E} \mathrm{Var} \left ( \Delta z(j) \big| \mathcal{F}_{j}^{z} \right ) \leq \mathrm{Var} \left ( \Delta z(j) \right )  \\ \nonumber
 		&& = \mathbb{E} \left ( \Delta z(j) - \mathbb{E} \Delta z(j) \right )^2   \\ \nonumber
 		&& = \mathbb{E} |\mathcal{N} (v_j, j)|^2 - \left ( \mathbb{E} |\mathcal{N} (v_j, j)| \right  )^2 .
 		\end{eqnarray}
 		Application of Lemma \ref{Lm_Est_ENvjj} to (\ref{Var_est}) gives us
 		\begin{eqnarray}\label{Var_est2}
 		\mathbb{E} (\Delta \mathcal{M}(j))^2 \leq 1 + O(1/N)
 		\end{eqnarray}
 		uniformly in $j<k$, implying by (\ref{TJ65})
 		\begin{equation}\label{TJ64}
 		\mathbb{E} (\mathcal{M}(T_N))^2 \leq N^{4/3} T (1 + o(1)) .
 		\end{equation}
 		
 		Now we consider taking into account definition (\ref{TSe10})
 		\begin{equation}\label{TJ66}
 		\mathbb{E} |\mathcal{D}(T_N)| \leq T_N \dfrac{3c}{N} 
 		+ \mathbb{E} \mathcal{D}_1(T_N) +\mathbb{E}  \mathcal{D}_2(T_N) +
 		\mathbb{E} \mathcal{D}_3(T_N) +\mathbb{E} \mathcal{D}_4(T_N) .
 		\end{equation}
 		By the result of Lemma
 		\ref{ThExpectationY_N} here we have
 			\begin{equation}\label{TJ67}
 			\mathbb{E} \mathcal{D}_1(T_N+1) \leq \dfrac{T_N^2}{|V_N| } .
 		\end{equation}
We shall bound each of  $\mathbb{E} \mathcal{D}_i$ separately.
By definition (\ref{TSe10})  and observation in (\ref{TWA}) we have
\begin{equation*}\label{TJ68}
  \mathbb{E} \mathcal{D}_2(T_N) = \sum\limits_{j=2}^{T_N-1} \sum_{i=1}^{j-1}
 \sum_{r=1}^N p_r \mathbb{P} \left\{{\cal A}_{i-1}=\emptyset,  v_i \in {N}_r(v_j) \right\} \end{equation*}
\[= \frac{O(\log N)}{N^2} T_N \left ( 1 + \max\limits_{j < T_N} |z(j)| \right )
\leq \frac{O(\log N)}{N^2} T N^{4/3}  \left ( 1 + K N^{2/3} \right ), 
\]
where the last inequality is due to the definition of $T_N$. Hence,
\begin{equation}\label{TJ69}
  \mathbb{E} \mathcal{D}_2(T_N)=O(\log N).
\end{equation}

To bound $\mathbb{E} \mathcal{D}_3$ first we derive with a help of (\ref{TJab})
\[
 		\sum_{s=1}^N \sum_{r=1}^N p_r \mathbb{E} \left|{N_r}(v_j) \cap \mathcal{N}_s(v_i) \cap \mathcal{N}(v_l,l) 
 		\right| =
                \mathbb{E} \left\{ \left| \mathcal{N}(v_j) \cap \mathcal{N}(v_i) \cap \mathcal{N}(v_l,l) \right| \right\}
\]\[
  \leq \sum\limits_{a, a', b \in V_N} \sum\limits_{u \in V_N} p(u, a') p(u, b) p(u, a)  \mathbb{P} \left (  v_l = a , v_i = a', v_j =  b \right )\]
\[
  \leq \sum\limits_{a, a', b \in V_N} \sum\limits_{u \in V_N} p(u, a') p(u, b) p(u, a)
p(a,a')p(a',b)
O \left ( \frac{\log^2 N}{N^2} \right )\]
\[
  =   \sum\limits_{u,b \in V_N}p(u, b)
  \sum\limits_{a'\in V_N}p(u, a') p(a',b)  \left( \sum\limits_{a \in V_N}
    p(u, a)
p(a,a')\right) 
O \left ( \frac{\log^2 N}{N^2} \right ).\]
Applying here  two times result (\ref{tt1}) we derive
\[
 		\sum_{s=1}^N \sum_{r=1}^N p_r \mathbb{E} \left|{N_r}(v_j) \cap \mathcal{N}_s(v_i) \cap \mathcal{N}(v_l,l) 
 		\right| \leq O \left ( \frac{\log^4 N}{N^4} \right ),
              \]
which yields (by definition of $\mathcal{D}_3$ in (\ref{TSe10})) that 
\[\mathbb{E} \mathcal{D}_3(T_N) \leq \mathbb{E} \mathcal{D}_3([N^{4/3}T]
)\]
\begin{equation}\label{TJ70}
  = \sum\limits_{j=2}^{[N^{4/3}T]-1}
  \sum_{i=1}^{j-1} \sum_{l=1}^{i-1} \sum_{s=1}^N \sum_{r=1}^N p_r \mathbb{E} \left|{N_r}(v_j) \cap \mathcal{N}_s(v_i) \cap \mathcal{N}(v_l,l) 
 		\right|
\end{equation}
\[\leq [N^{4/3}T]^3 O \left ( \frac{\log^4 N}{N^{4}} \right ) = O \left ( \log^4 N \right ). \]

For the last term $ \mathbb{E} \mathcal{D}_4$ first we derive,  making again use of (\ref{TJab})
\[ \mathbb{P} \left\{v_l \in \mathcal{N}(v_j) \cap \mathcal{N}(v_i) \right\}
= \sum\limits_{a, a', b \in V_N}  p(a, b) p(a, a') \mathbb{P} \left\{v_l = a, v_i = a', v_j = b \right\}
\]\[
  \leq \sum\limits_{a, a', b \in V_N}p(a, b) p(a, a') 
p(a,a')p(a',b)
O \left ( \frac{\log^2 N}{N^2} \right ).
\]
Summing over $b$ we use bound (\ref{tt1}), which gives us 
\[ \mathbb{P} \left\{v_l \in \mathcal{N}(v_j) \cap \mathcal{N}(v_i) \right\}
  \leq \sum\limits_{a, a' \in V_N} p^2(a, a') 
O \left ( \frac{\log^3 N}{N^4} \right )
\]
uniformly in $l<i<j<T_N$.
Finally, applying here Proposition \ref{Prop1} we get
\[ \mathbb{P} \left\{v_l \in \mathcal{N}(v_j) \cap \mathcal{N}(v_i) \right\}
  \leq 
O \left ( \frac{\log^4 N}{N^4} \right ),
\]
which yields
\[\mathbb{E} \mathcal{D}_4(T_N)\leq\mathbb{E} \mathcal{D}_4([N^{4/3}T])\]
\begin{equation}\label{TJ71}
 = \sum\limits_{j=2}^{[N^{4/3}T]-1}  \sum_{r=1}^N p_r\sum_{i=1}^{j-1} 
  \sum_{l=1}^{i-1}\mathbb{P} \left\{
    {\cal A}_{l-1}= \emptyset ,  v_l \in {N_r}(v_j) \cap \mathcal{N}(v_i) 
 	 \right\}
 \end{equation}		
\[=
  \sum\limits_{j=2}^{[N^{4/3}T]-1}  \sum_{i=1}^{j-1} 
  \sum_{l=1}^{i-1}\mathbb{P} \left\{
    {\cal A}_{l-1}= \emptyset ,  v_l \in { \mathcal{N}}(v_j) \cap \mathcal{N}(v_i) 
  \right\}\]
\[=O \left ( (N^{4/3}T)^3 \frac{\log^4 N}{N^4} \right ) =O \left ( \log^4 N \right ).
 		\]

                Combining bounds (\ref{TJ71}),  (\ref{TJ70}), (\ref{TJ69}), and (\ref{TJ67})
                in 
(\ref{TJ66}), we obtain
\begin{equation*}\label{TJ662}
 		\mathbb{E} |\mathcal{D}(T_N)| \leq 2T^2N^{2/3}
\end{equation*}
for all large $N$.

This together with bound (\ref{TJ64}) implies
\[ \dfrac{(\mathbb{E} (\mathcal{M}(T_N))^2)^{1/2} + \mathbb{E} |\mathcal{D}(T_N)|}{K N^{2/3}} \leq 2\frac{\sqrt{T}+T^2}{K}. \]
Hence, 
by $(\ref{Prop_Z_stoch_bound_Eq1})$ 
\[
  \mathbb{P} \left \{
    \max\limits_{k \leq N^{4/3} T} | z(k) | > K N^{2/3}
  \right \}
  < 2\frac{\sqrt{T}+T^2}{K}. \]
                \end{proof}
            
     Note that $| z(k) |\leq N^2$ for all $k<N^2$. Therefore the result the last Lemma \ref{Lbz} allows us to derive
    \[
   \mathbb{E} \left(
   \frac{\max\limits_{k \leq N^{4/3} T} | z(k) |}{N^{2/3}}\right)=
   O\left( \sum_{K=1}^{N^{4/3}}
   \mathbb{P} \left \{
   \max\limits_{k \leq N^{4/3} T} | z(k) | > K N^{2/3}
   \right \}
   \right)\]
   \begin{equation*}\label{TJ77}
   =
   O\left(\log N\right).
      \end{equation*}
    Making use of this bound in (\ref{TWA})
we obtain also a bound for the expectation
\begin{equation}\label{TA1}
\mathbb{E}
\mathcal{D}_2 
	\left (1 + k \right ) 
\leq O\left(\frac{\log N}{N^2} \right)k
O\left
(N^{2/3} \log N  \right)= O\left( \log^2 N  \right)
\end{equation}
uniformly in $k\leq TN^{4/3}$.

\subsubsection{Proof of Lemma \ref{dr}.}
Now we can prove (\ref{M5}), which states that 
 	\[
 	{\widetilde {\cal D}} (s)
 	= \frac{ {\cal D}(1+ [ N^{4/3} s ])}{N^{2/3}}
 	 \stackrel{P}{\rightarrow} -
 	\frac{1}{2}
 	s^2
 	\]
 	uniformly in $s\in [0,T]$
 	as $N\rightarrow \infty$.
 	
 By  (\ref{TSe10}) we have
 	\begin{eqnarray}\label{TSeA10}
 	&& {\widetilde {\cal D}} (s)
 	= \frac{ {\cal D}\left(1+ [N^{4/3} s ]
 	\right)}{N^{2/3}}
 	 =-\frac{  N^{4/3} s }{N^{2/3}}
 	 \left(\dfrac{2c}{N} +O\left(\dfrac{1}{N^2}\right) \right) \\ \nonumber
 	&& -  	\dfrac{\mathcal{D}_1 \left (1 + [N^{4/3} s] \right )  }{N^{2/3}} 
 	-\dfrac{\mathcal{D}_2 \left (1 + [N^{4/3} s] \right )  }{N^{2/3}}
 	\\ \nonumber
 	&&+\dfrac{\mathcal{D}_3 \left (1 + [N^{4/3} s] \right )  }{N^{2/3}}
 	+\dfrac{\mathcal{D}_4 \left (1 + [N^{4/3} s] \right )  }{N^{2/3}}.
 	\end{eqnarray}
 	
 	By (\ref{TA1}) we have
 	\begin{equation}\label{D2}
 	\dfrac{\mathcal{D}_2 \left (1 + [N^{4/3} s] \right )  }{N^{2/3}}
 	\stackrel{P}{\rightarrow} 0.
 	\end{equation}
 Also, by (\ref{TJ71})
 \begin{equation}\label{D3}
 \dfrac{\mathcal{D}_3 \left (1 + [N^{4/3} s] \right )  }{N^{2/3}}
 \stackrel{P}{\rightarrow} 0
 \end{equation}
 and by (\ref{TJ70})
 \begin{equation}\label{D4}
 \dfrac{\mathcal{D}_4 \left (1 + [N^{4/3} s] \right )  }{N^{2/3}}
 \stackrel{P}{\rightarrow} 0,
 \end{equation}
 while by Corollary \ref{D1}
 \begin{equation}\label{D}
 \dfrac{\mathcal{D}_1 \left (1 + [N^{4/3} s] \right )  }{N^{2/3}}
 \stackrel{P}{\rightarrow} \frac{s^2}{2}.
 \end{equation}
 The last four assertions (\ref{D2})-(\ref{D}) together with 
 (\ref{TSeA10}) yield statement (\ref{M5}).
 \hfill$\Box$
 
 \subsection{Proof of Lemma \ref{ma}}\label{SectionProof27}
 We shall prove $(\ref{M4})$, which states that
 \begin{equation*}\label{M4proof}
 	{\widetilde {\cal M}} (s)  = \dfrac{{\cal M}(1+ [ |V_N|^{2/3} s ] )}{|V_N|^{1/3}}	
 	\stackrel{d}{\rightarrow}
 	\ W(s),
 \end{equation*}
 uniformly in $s \in [0, T]$ as $N \rightarrow \infty$.
 Recall that $|V_N|=N^2$ and we also use notation $n=|V_N|.$
 Here we follow closely approach of \cite{Aldous} (as it was adapted for inhomogeneous case in \cite{Turova})
 utilizing the results of the previous sections.
 
   Let us define a quadratic variation process
 \begin{equation*}
 	A(k) = \sum\limits_{j = 1}^{k-1} \mathbb{E} \left ( |\Delta \mathcal{M} (j)|^2 \big| \mathcal{F}_{j}^{z}\right ),
 \end{equation*}
 and its normalized version
 \begin{equation}\label{Atilda}
 	\widetilde{A}_n(s) = n^{-2/3} A(1 + [ n^{2/3} s]) = n^{-2/3} \sum\limits_{j = 1}^{[ n^{2/3} s]} \mathbb{E} \left ( |\Delta \mathcal{M} (j)|^2 \big| \mathcal{F}_{j}^{z}\right ) .
 \end{equation}
 Then 
 \begin{equation}\label{EqMar}
 	\mathcal{M}^{2}(k) - A(k), \; k \geq 1
 \end{equation}
 is a martingale. 

 		To prove $(\ref{M4})$ we apply the functional central limit theorem for martingales (from \cite{EthierKurtz} Chapter 7, Theorem 1.4 and Remark 1.5). Since $(\ref{EqMar})$ is a martingale, we need to prove the following three statements: for each $s > 0$
 		\begin{eqnarray}
 		&&\lim\limits_{n \rightarrow \infty} \mathbb{E} \left( \sup\limits_{t \leq s} \left | \widetilde{A}_n(t) - \widetilde{A}_n(t-) \right | \right) = 0, \label{Prop_Martingale_eq1} \\
 		&&\lim\limits_{n \rightarrow \infty} \mathbb{E} \left( \sup\limits_{t \leq s} \left | \widetilde{\mathcal{M}}_n(t) - \widetilde{\mathcal{M}}_n(t-) \right |^2 \right) = 0, \label{Prop_Martingale_eq2} \\
 		&&\widetilde{A}_n(s) \stackrel{P}{\rightarrow} s, \text{ as }n\rightarrow \infty . \label{Prop_Martingale_eq3}
 		\end{eqnarray}
 		
 		Consider the expectation in $(\ref{Prop_Martingale_eq2})$
 		\begin{eqnarray}\label{EqMar2}
 		\mathbb{E} \left( \sup\limits_{t \leq s} \left | \widetilde{\mathcal{M}}_n(t) - \widetilde{\mathcal{M}}_n(t-) \right |^2 \right) = \dfrac{1}{N^{4/3}} \mathbb{E} \sup\limits_{j \leq s N^{4/3}} \left ( \Delta \mathcal{M} (j) \right )^2 .
 		\end{eqnarray}
 		By $(\ref{Var_est})$ and $(\ref{Var_est2})$
 		\begin{eqnarray*}
 		&& \mathbb{E} \left ( \Delta \mathcal{M} (j) \right )^2 \leq 1 + O(1/N) .
 		\end{eqnarray*}
 		
 		Recall from  $(\ref{eta})$ and $(\ref{meanE})$ that $|\mathcal{N} (v_j, j)|$ is stochastically dominated by Poisson distributed variable with mean $1$. Denote such a Poisson variable as $\eta$ and consider, writing  $\varphi(k)=\sqrt{k}$ 
 		\begin{eqnarray}\label{EqMar1}
 		&& \mathbb{E} \sup\limits_{j \leq k} \left ( \Delta \mathcal{M} (j) \right )^2 \leq \mathbb{E} \sup\limits_{j \leq k} | \mathcal{N} (v_j, j) |^2 \\ \nonumber
 		&& \leq \varphi (k) \mathbb{P} \left ( \sup\limits_{j \leq k} | \mathcal{N} (v_j, j) |^2 \leq \varphi (k) \right ) + \varphi (k) \sum\limits_{l > \varphi(k)} \mathbb{P} \left ( \sup\limits_{j \leq k} | \mathcal{N} (v_j, j) |^2 = l \right ) \\ \nonumber
 		&& + \sum\limits_{l > \varphi(k)} \mathbb{P} \left ( \sup\limits_{j \leq k} | \mathcal{N} (v_j, j) |^2 \geq l \right ) \leq \varphi(k) + \sum\limits_{l > \varphi(k)} \mathbb{P} \left ( \sup\limits_{j \leq k} | \mathcal{N} (v_j, j) |^2 \geq l \right ) \\ \nonumber
 		&& \leq \varphi(k) + \sum\limits_{j = 1}^{k} \sum\limits_{l > \varphi(k)} \mathbb{P} \left ( | \mathcal{N} (v_j, j) |^2 \geq l \right ) \leq \varphi(k) + \sum\limits_{j = 1}^{k}\sum\limits_{l > \varphi(k)} \dfrac{\mathbb{E} \eta^3}{l^{3/2}} \\ \nonumber
 		&& \leq \varphi(k) + \dfrac{5 k}{2 \sqrt{\varphi(k)}} \left ( 1 + o(1) \right ) ,
 		\end{eqnarray}
 		since $\mathbb{E} \eta^{3} = 5$.
 		
 		Setting $k = s N^{4/3}$  in $(\ref{EqMar1})$ we derive 
 		\begin{equation*}
 		\dfrac{1}{N^{4/3}} \mathbb{E} \sup\limits_{j \leq s N^{4/3}} \left ( \Delta \mathcal{M} (j) \right )^2 \leq \dfrac{1}{N^{4/3}} \left ( \sqrt{s N^{4/3}} + \dfrac{5 s N^{4/3}}{2 (s N^{4/3})^{1/4}} \right ) \left ( 1 + o(1) \right ) \rightarrow 0 ,
 		\end{equation*}
 		which proves $(\ref{Prop_Martingale_eq2})$.
 		
 		Consider $\widetilde{A}_n(s)$ from $(\ref{Atilda})$
 		\begin{eqnarray}
 		&& \widetilde{A}_n(s) = n^{-2/3} \sum\limits_{j = 1}^{[ n^{2/3} s]} \mathbb{E} \left ( |\Delta \mathcal{M} (j)|^2 \big| \mathcal{F}_{j}^{z}\right ) \nonumber \\ \nonumber
 		&& = n^{-2/3} \sum\limits_{j = 1}^{[ n^{2/3} s]} \mathrm{Var} \left ( \Delta z(j) \big| \mathcal{F}_{j}^{z}\right ) .
 		\end{eqnarray}
 		To prove $(\ref{Prop_Martingale_eq3})$ it is enough to prove that 
 		\begin{eqnarray}
 		\mathbb{E} \widetilde{A}_n(s) & \rightarrow & s \label{EqMar14} , \\
 		\mathrm{Var} \widetilde{A}_n(s) & \rightarrow & 0 \label{EqMar15} .
 		\end{eqnarray}
 		Firstly,
 		\begin{eqnarray}\label{EqMar5}
 		&& \mathbb{E} \widetilde{A}_n(s) = \dfrac{1}{N^{4/3}} \sum\limits_{j = 1}^{[ N^{4/3} s]} \mathbb{E} \mathrm{Var} \left ( \Delta z(j) \big| \mathcal{F}_{j}^{z} \right ) \\ \nonumber
 		&& = \dfrac{1}{N^{4/3}} \sum\limits_{j = 1}^{[ N^{4/3} s]} \mathbb{E} \mathrm{Var} \left ( |\mathcal{N} (v_j, j)|\big| \mathcal{F}_{j}^{z} \right ) \\ \nonumber
 		&& = \dfrac{1}{N^{4/3}} \sum\limits_{j = 1}^{[ N^{4/3} s]} \left ( \mathbb{E} |\mathcal{N} (v_j, j)|^2 - \mathbb{E} \left ( \mathbb{E} \left ( |\mathcal{N} (v_j, j)| \big| \mathcal{F}_j^z \right ) \right )^2 \right ) .
 		\end{eqnarray}
 		Recall that by Lemma \ref{Lm_Est_ENvjj}
 		\begin{equation}\label{EqMar4}
 			\mathbb{E} |\mathcal{N} (v_j, j)|^2 = 2 + O(N^{-1/4}).
 		\end{equation}
 		and by Lemma \ref{Lm_Est_ENvjjFjz} 
 		\begin{eqnarray*}
 			\mathbb{E} \left ( |\mathcal{N} (v_j, j)| \big| \mathcal{F}_j^z \right ) = 1 + O(1/N) + O \left (\dfrac{\sqrt{|I_{j-1}|}}{N} \right ) .
 		\end{eqnarray*}
 		Taking also into account  $(\ref{tm12})$ we derive
 		\begin{eqnarray}\label{EqMar3}
 			\mathbb{E} \left ( \mathbb{E} \left ( |\mathcal{N} (v_j, j)| \big| \mathcal{F}_j^z \right ) \right )^2 = 1 + O(N^{-1/4}) .
 		\end{eqnarray}
 		Applying $(\ref{EqMar3})$ and $(\ref{EqMar4})$ to $(\ref{EqMar5})$ we get
 		\begin{equation*}
 		\mathbb{E} \widetilde{A}_n(s) = s + o(1) ,
 		\end{equation*}
 		which proves $(\ref{EqMar14})$.
 		
 		Next for the variance in $(\ref{EqMar15})$ we have
 		\begin{eqnarray}\label{EqMar6}
 		&& \mathrm{Var} \widetilde{A}_n(s) = \mathrm{Var} \left ( \dfrac{1}{N^{4/3}} \sum\limits_{j = 1}^{[ N^{4/3} s]} \mathrm{Var} \left ( |\mathcal{N} (v_j, j)| \big| \mathcal{F}_j^z \right ) \right ) \\ \nonumber
 		&& = \dfrac{1}{N^{8/3}} \left ( \sum\limits_{j = 1}^{[ N^{4/3} s]} \mathrm{Var} \mathrm{Var} \left ( |\mathcal{N} (v_j, j)| \big| \mathcal{F}_j^z \right ) \right . \\ \nonumber
 		&& \left . + 2 \sum\limits_{i = 1}^{[ N^{4/3} s]} \sum\limits_{j = i + 1}^{[ N^{4/3} s]} \mathrm{Cov} \left ( \mathrm{Var} \left ( |\mathcal{N} (v_i, i)| \big| \mathcal{F}_i^z \right ), \mathrm{Var} \left ( |\mathcal{N} (v_j, j)| \big| \mathcal{F}_j^z \right ) \right ) \right ) .
 		\end{eqnarray}
 		Consider a term in the first sum on the right in $(\ref{EqMar6})$:
 		\begin{eqnarray}\label{EqMar7}
 		&& \mathrm{Var} \mathrm{Var} \left ( |\mathcal{N} (v_j, j)| \big| \mathcal{F}_j^z \right ) \\ \nonumber
 		&& = \mathrm{Var} \left ( \mathbb{E} \left ( |\mathcal{N} (v_j, j)|^2 \big| \mathcal{F}_j^z \right ) - \left ( \mathbb{E} \left ( |\mathcal{N} (v_j, j)| \big| \mathcal{F}_j^z \right ) \right )^2 \right ) \\ \nonumber
 		&& = \mathbb{E} \left ( \mathbb{E} \left ( |\mathcal{N} (v_j, j)|^2 \big| \mathcal{F}_j^z \right ) \right )^2 - 2 \mathbb{E} \left ( \mathbb{E} \left ( |\mathcal{N} (v_j, j)|^2 \big| \mathcal{F}_j^z \right ) \left ( \mathbb{E} \left ( |\mathcal{N} (v_j, j)| \big| \mathcal{F}_j^z \right ) \right )^2 \right ) \\ \nonumber
 		&& + \mathbb{E} \left ( \mathbb{E} \left ( |\mathcal{N} (v_j, j)| \big| \mathcal{F}_j^z \right ) \right )^4 - \left ( \mathbb{E} |\mathcal{N} (v_j, j)|^2 \right )^2 \\ \nonumber
 		&& + 2 \mathbb{E} |\mathcal{N} (v_j, j)|^2 \mathbb{E} \left ( \mathbb{E} \left ( |\mathcal{N} (v_j, j)| \big| \mathcal{F}_j^z \right ) \right )^2 - \left ( \mathbb{E} \left ( \mathbb{E} \left ( |\mathcal{N} (v_j, j)| \big| \mathcal{F}_j^z \right ) \right )^2 \right )^2 .
 		\end{eqnarray}
 		We shall estimate all 6 terms in $(\ref{EqMar7})$ separately. With a help of Lemma \ref{Lm_Est_ENvjjFjz} we bound the first term
 		\begin{eqnarray*}
 			&& \mathbb{E} \left ( \mathbb{E} \left ( |\mathcal{N} (v_j, j)|^2 \big| \mathcal{F}_j^z \right ) \right )^2 \leq \mathbb{E} \left ( 2 + O(N^{-1}) \right )^2 =  \nonumber \\ 
 			&& = 4 + O (N^{-1}).
 		\end{eqnarray*}
 		With a help of Lemma \ref{Lm_Est_ENvjjFjz} and $(\ref{tm12})$ we bound the second term
 		\begin{eqnarray}
 			&& - 2 \mathbb{E} \left ( \mathbb{E} \left ( |\mathcal{N} (v_j, j)|^2 \big| \mathcal{F}_j^z \right ) \left ( \mathbb{E} \left ( |\mathcal{N} (v_j, j)| \big| \mathcal{F}_j^z \right ) \right )^2 \right )  \nonumber \\ \nonumber
 			&& \leq -2 \mathbb{E} \left ( \left ( 2 - 4 \dfrac{4 c \sqrt{|I_{j-1}|}}{N} + O \left (\dfrac{1}{N} \right ) \right ) \left ( 1 - \dfrac{4 c \sqrt{|I_{j-1}|}}{N} + O \left (\dfrac{1}{N} \right ) \right )^2 \right )  \\ \nonumber
 			&& = -4 + O (N^{-1/4}) .
 		\end{eqnarray}
 		For the third term on the right in $(\ref{EqMar7})$
 		we have by Lemma \ref{Lm_Est_ENvjjFjz},
 		\begin{eqnarray*}
 			\mathbb{E} \left ( \mathbb{E} \left ( |\mathcal{N} (v_j, j)| \big| \mathcal{F}_j^z \right ) \right )^4 \leq \mathbb{E} \left ( 1 + O(N^{-1}) \right )^4 = 1 + O(N^{-1}) .
 		\end{eqnarray*}
 		while by Lemma \ref{Lm_Est_ENvjj} for the forth one
 		\begin{eqnarray*}
 			 \left ( \mathbb{E} |\mathcal{N} (v_j, j)|^2 \right )^2 =  \mathbb{E} \left ( 2 + O(N^{-1/4}) \right )^2 = 4 + O(N^{-1/4}) .
 		\end{eqnarray*}
 		and by of Lemma \ref{Lm_Est_ENvjj} and Lemma \ref{Lm_Est_ENvjjFjz} for the fifth
 		\begin{eqnarray}
 			&& 2 \mathbb{E} |\mathcal{N} (v_j, j)|^2 \mathbb{E} \left ( \mathbb{E} \left ( |\mathcal{N} (v_j, j)| \big| \mathcal{F}_j^z \right ) \right )^2 \leq 2 \left ( 2 + O(N^{-1/4}) \right ) \mathbb{E} \left ( 1 + O(N^{-1}) \right )^2 \nonumber \\ \nonumber
 			&& = 4 + O(N^{-1/4}) .
 		\end{eqnarray}
 		Finally, Lemma \ref{Lm_Est_ENvjjFjz} and $(\ref{tm12})$ gives us a bound for the last term
 		in $(\ref{EqMar7})$
 		\begin{eqnarray}
 			&& - \left ( \mathbb{E} \left ( \mathbb{E} \left ( |\mathcal{N} (v_j, j)| \big| \mathcal{F}_j^z \right ) \right )^2 \right )^2 \leq - \left ( \mathbb{E} \left ( 1 - \dfrac{4 c \sqrt{|I_{j-1}|}}{N} + O \left (\dfrac{1}{N} \right ) \right )^2 \right )^2 \nonumber \\ \nonumber 
 			&& \leq -1 + O(N^{-1/4}) .
 		\end{eqnarray}
 		Combining all six previous assertions we derive for $(\ref{EqMar7})$  		
 		\begin{eqnarray}\label{EqMar13}
 			&&  \mathrm{Var} \mathrm{Var} \left ( |\mathcal{N} (v_j, j)| \big| \mathcal{F}_j^z \right ) \\ \nonumber
 			&& \leq 4 - 4 + 1 - 4 + 4 - 1 + O(N^{-1/4}) = O(N^{-1/4}) .
 		\end{eqnarray} 		
 		
 		Consider a term in the second sum on the right in $(\ref{EqMar6})$
 		\begin{eqnarray}\label{EqMar8}
 		&& \mathrm{Cov} \left ( \mathrm{Var} \left ( |\mathcal{N} (v_i, i)| \big| \mathcal{F}_i^z \right ), \mathrm{Var} \left ( |\mathcal{N} (v_j, j)| \big| \mathcal{F}_j^z \right ) \right ) \\ \nonumber
 		&& = \mathbb{E}  \left \{ \left ( \mathbb{E} \left ( |\mathcal{N} (v_i, i)|^2 \big| \mathcal{F}_i^z \right ) - \left ( \mathbb{E} \left ( |\mathcal{N} (v_i, i)| \mathcal{F}_i^z \right ) \right )^2 \right ) \right . \\ \nonumber
 		&& \left . \times \left ( \mathbb{E} \left ( |\mathcal{N} (v_j, j)|^2 \big| \mathcal{F}_j^z \right ) - \left ( \mathbb{E} \left ( |\mathcal{N} (v_j, j)| \mathcal{F}_j^z \right ) \right )^2 \right ) \right \} \\ \nonumber
 		&& - \mathbb{E} \left ( \mathrm{Var} \left ( |\mathcal{N} (v_i, i)| \big| \mathcal{F}_i^z \right ) \right ) \mathbb{E} \left ( \mathrm{Var} \left ( |\mathcal{N} (v_j, j)| \big| \mathcal{F}_j^z \right ) \right ) .
 	\end{eqnarray} 	
 	With a help of Lemma \ref{Lm_Est_ENvjjFjz} and the property $|I_{i-1}| \leq |I_{N^{4/3}s}|$ we  get
 	\begin{eqnarray}\label{EqMar9}
 		&& \mathbb{E} \left ( |\mathcal{N} (v_i, i)|^2 \big| \mathcal{F}_i^z \right ) - \left ( \mathbb{E} \left ( |\mathcal{N} (v_i, i)| \mathcal{F}_i^z \right ) \right )^2 \\ \nonumber
 		&& \leq 2 + O \left (\dfrac{1}{N} \right ) - \left ( 1 - \dfrac{4c \sqrt{|I_{N^{4/3}s}|}}{N} + O\left ( \dfrac{1}{N} \right ) \right )^2,
 	\end{eqnarray}
 	and in a similar way 
 	\begin{eqnarray}\label{EqMar10}
 		&& \mathbb{E} \left ( \mathrm{Var} \left ( |\mathcal{N} (v_i, i)| \big| \mathcal{F}_i^z \right ) \right ) \\ \nonumber
 		&& =  \mathbb{E} |\mathcal{N} (v_i, i)|^2 - \mathbb{E} \left ( \mathbb{E} \left ( |\mathcal{N} (v_i, i)| \big| \mathcal{F}_i^z \right ) \right )^2 \\ \nonumber
 		&&  =  1 + O (N^{-1/4} ).
 	\end{eqnarray}
 	Finally, again with a help of Lemma \ref{Lm_Est_ENvjj} and Lemma \ref{Lm_Est_ENvjjFjz}, and taking into account  $(\ref{tm12})$ we derive 
 	\begin{eqnarray}\label{EqMar11}
 		&& \mathbb{E} \left ( \mathrm{Var} \left ( |\mathcal{N} (v_j, j)| \big| \mathcal{F}_j^z \right ) \right ) \\ \nonumber
 		&& = \mathbb{E} |\mathcal{N} (v_j, j)|^2 + \mathbb{E} \left ( \mathbb{E} \left ( |\mathcal{N} (v_j, j)| \big| \mathcal{F}_j^z \right ) \right )^2 \\ \nonumber
 		&& \leq 2 + O \left ( \dfrac{1}{N^{1/4}} \right ) + \mathbb{E} \left ( 1 - \dfrac{4c \sqrt{|I_{j-1}|}}{N}+ O \left ( \dfrac{1}{N} \right ) \right )^2 = 1 + O \left ( \dfrac{1}{N^{1/4}} \right  ).
 	\end{eqnarray}
 		
 	Combining $(\ref{EqMar11})$, $(\ref{EqMar10})$, $(\ref{EqMar9})$, and taking into account $(\ref{tm12})$ we derive for the covariance in $(\ref{EqMar8})$
 	\begin{eqnarray}\label{EqMar12}
 		&& \mathrm{Cov} \left ( \mathrm{Var} \left ( |\mathcal{N} (v_i, i)| \big| \mathcal{F}_i^z \right ), \mathrm{Var} \left ( |\mathcal{N} (v_j, j)| \big| \mathcal{F}_j^z \right ) \right ) \\ \nonumber
 		&& \leq \mathbb{E} \left ( 2 + O \left (\dfrac{1}{N} \right ) - \left ( 1 - \dfrac{4c \sqrt{|I_{N^{4/3}s}|}}{N} + O\left ( \dfrac{1}{N} \right ) \right )^2 \right )^2 \\ \nonumber
 		&& - \left ( 1 +  O \left ( \dfrac{1}{N^{1/4}} \right  ) \right )^2 = O(N^{-1/4}) .
 		\end{eqnarray}
 		
 		Equipped with $(\ref{EqMar12})$ and $(\ref{EqMar13})$ we derive from $(\ref{EqMar6})$
 		\begin{eqnarray}
 		&& \mathrm{Var} \widetilde{A}_n(s) \leq \dfrac{1}{N^{8/3}} \left ( N^{4/3} s  \ O(N^{-1/4}) + N^{8/3} s^2  \ O(N^{-1/4}) \right ) \nonumber \\ \nonumber
 		&& = s^2 O(N^{-1/4}) \rightarrow 0,
 		\end{eqnarray}
 		which proves $(\ref{EqMar15})$. 		
 		Thus, 
 		$(\ref{Prop_Martingale_eq3})$ follows by
 		 $(\ref{EqMar14})$ and $(\ref{EqMar15})$.
 		
 		Consider now 
 		
 		Finally, $(\ref{Prop_Martingale_eq1})$
 		is derived with a help of Lemma \ref{Lm_Est_ENvjjFjz} as follows
 		\begin{eqnarray}
 		&& \mathbb{E} \left( \sup\limits_{t \leq s} \left | \widetilde{A}_n(t) - \widetilde{A}_n(t-) \right | \right) \nonumber \\ \nonumber
 		&& = \dfrac{1}{N^{4/3}}\mathbb{E} \left( \sup\limits_{j \leq N^{4/3} s} \mathrm{Var} \left ( |\mathcal{N} (v_j, j)|\big| \mathcal{F}_{j}^{z} \right ) \right) \\ \nonumber
 		&& \leq \dfrac{1}{N^{4/3}}\mathbb{E} \left( \sup\limits_{j \leq N^{4/3} s} \mathbb{E} \left ( |\mathcal{N} (v_j, j)|^2 \big| \mathcal{F}_{j}^{z} \right ) \right) \leq \dfrac{1}{N^{4/3}} \left ( 2 + O(N^{-1}) \right ) \rightarrow 0 .
 		\end{eqnarray}
 		This ends the proof of $(\ref{M4})$.
                \hfill$\Box$
\bigskip
                
                As it was stated earlier the proofs of Lemma \ref{dr} and Lemma \ref{ma} complete the proof of Theorem \ref{T}.
                \hfill$\Box$

\section*{Acknowledgements}
The authors thank Sreekar Vadlamani for the inspiring discussions at the beginning of the project.

\noindent
The work is supported by Swedish Research Council (VR).

\end{document}